\newcommand{\xmark}{\ding{55}}%
\theoremstyle{plain}
\newtheorem{theorem}{Theorem}[section]
\newtheorem{proposition}[theorem]{Proposition}
\newtheorem{lemma}[theorem]{Lemma}
\newtheorem{corollary}{Corollary}[theorem]
\theoremstyle{definition}
\newtheorem{assumption}{Assumption}
\theoremstyle{remark}
\newtheorem{remark}{Remark}[section]
\definecolor{mygray}{gray}{0.8}
\renewcommand{\arraystretch}{1.2} % Default value: 1.2
\newcommand{\numberthis}{\addtocounter{equation}{1}\tag{\theequation}}
\newcommand{\ind}{{\bf 1}}
\newcommand{\lrset}[1]{\left\{ #1 \right\}}
\newcommand{\lrp}[1]{\left( #1 \right)}
\newcommand{\lrs}[1]{\left[ #1 \right]}
\newcommand{\abs}[1]{\left|{#1}\right|}
\newcommand{\E}[1]{\mathbb{E}\!\lrs{#1}}
\newcommand{\Exp}[2]{\mathbb{E}_{#1\!}\!\lrs{#2}}
\newcommand{\V}{\operatorname{Var}}
\newcommand{\R}{\mathbb{R}}
\newcommand{\N}{\mathbb{N}}
\newcommand{\G}{\mathcal N}
\newcommand{\calA}{{\mathcal A}}
\newcommand{\calX}{{\mathcal X}}
\newcommand{\calS}{{\mathcal S}}
\newcommand{\calM}{{\mathcal M}}
\newcommand{\calY}{{\mathcal Y}}
\newcommand{\frakd}{{\mathfrak d}}
\newcommand{\frakC}{{\mathfrak C}}
\newcommand{\A}{\textsf{A}}
\newcommand{\vb}{\textgoth{b}}
\newcommand{\upb}{\textsf{H \!}}
\newcommand{\iter}{\Gamma}
\title{Markov Chain Variance Estimation: A Stochastic Approximation Approach}
\author[1]{Shubhada Agrawal\footnote{This work was done when the author was a postdoctoral researcher at Georgia Institute of Technology, USA.}}
\author[2]{Prashanth L.A.}
\author[3]{Siva Theja Maguluri}
\affil[1]{Carnegie Mellon University, USA}
\affil[2]{Indian Institute of Technology, Madras, India}
\affil[3]{Georgia Institute of Technology, USA}
\date{}
\begin{document}
\maketitle

\begin{abstract}
We consider the problem of estimating the asymptotic variance of a function defined on a Markov chain, an important step for statistical inference of the stationary mean. We design a novel recursive estimator that requires $O(1)$ computation at each step, does not require storing any historical samples or any prior knowledge of run-length, and has optimal $O(\frac{1}{n})$ rate of convergence for the mean-squared error (MSE) with provable finite sample guarantees. Here, $n$ refers to the total number of samples generated. Our estimator is based on linear stochastic approximation of an equivalent formulation of the asymptotic variance in terms of the solution of the Poisson equation. 

We generalize our estimator in several directions, including estimating the covariance matrix for vector-valued functions, estimating the stationary variance of a Markov chain, and approximately estimating the asymptotic variance in settings where the state space of the underlying Markov chain is large. We also show applications of our estimator in average reward reinforcement learning (RL), where we work with asymptotic variance as a risk measure to model safety-critical applications. We design a temporal-difference type algorithm tailored for policy evaluation in this context. We consider both the tabular and linear function approximation settings. Our work paves the way for developing actor-critic style algorithms for variance-constrained RL. 
\end{abstract}

\section{Introduction}\label{sec:intro}
Consider an irreducible and aperiodic discrete time Markov chain (DTMC) on a finite state space. Given a function $f$, consider the partial sum 
$S_n(f) := \sum\nolimits_{k=0}^{n-1}f(X_k).$ Under appropriate assumptions, it is well known that $\frac{S_n}{n}$ converges almost surely to $\bar{f}$, the stationary expectation of $f$. Moreover, Central Limit Theorem (CLT) holds \cite[Chapter 21]{douc2018markov}, and implies that for a positive constant $\kappa(f)$,
\[\frac{S_n(f) - n \bar{f}}{\sqrt{n}} \Rightarrow N(0, \kappa(f)) \quad \text{ as } \quad n\rightarrow\infty,  \]
where the above convergence is in distributions, and $N(0,\kappa(f))$ denotes the Gaussian distribution with mean $0$ and variance $\kappa(f)$. Since $\kappa(f)$ corresponds to the variance in CLT, it is commonly referred to as asymptotic variance of the Markov chain. 

The focus of the paper is to estimate $\kappa(f)$, which has been a long-standing area of research. Firstly, estimates of $\kappa(f)$ play an important role in the statistical analysis of simulation outputs obtained from a Markov chain Monte Carlo experiment, for example, in statistical inference for the stationary mean, $\bar{f}$ (see, \cite{schmeiser1982batch,schruben1983confidence, carlstein1986use,kunsch1989jackknife, glynn1992asymptotic,chien1997large, flegal2010batch,wu2009recursive,nordman2009note,atchade2011kernel,dingecc2015jackknifed}). 

A second motivation comes from reinforcement learning (RL). In RL framework, the transition dynamics of the underlying MDP are unknown. The algorithm can obtain a sample of the MDP under any given policy, which specifies how actions are chosen in a given state. The traditional goal in an average reward RL problem is to find a policy that maximizes $\frac{S_n}{n}$. While the need to optimize over the average reward is well motivated, for applications in safety-critical domains, for example, healthcare or finance, it is also crucial to control adverse outcomes. {As an example, one can consider the portfolio optimization problem, where the objective is to find an investment strategy that maximizes the return (expected value), while keeping the variability under control.} As we will see later in Section~\ref{sec:motivation_var}, a natural measure of variability in this setup is the asymptotic variance $\kappa(f)$. Estimating $\kappa(f)$  is a vital sub-problem in mean-variance policy optimization, for instance, as a critic in an actor-critic framework, cf. \cite{prashanth2016variance}. 

Estimating $\kappa(f)$, while practically relevant, is particularly challenging compared to estimating variance using independent and identically-distributed (i.i.d.) samples because of temporal correlations introduced due to the underlying Markov chain structure. The problem of estimating $\kappa(f)$ has been extensively studied in literature. However, to the best of our knowledge, {most of the existing estimators have limitations of being either non-recursive in that the estimator at step $n$ cannot be expressed as a function of the various statistics computed at step $n-1$, necessitating re-computing the estimator from scratch at each step, or requiring to store a large number of historical observations, or having a sub-optimal  rate of convergence of mean-squared estimation error to $0$, or lack finite sample guarantees, or some combination of these. We discuss the related literature in details in Section~\ref{sec:lit}. In this work, our goal is to design an estimator for  $\kappa(\cdot)$ that utilizes samples from the given Markov chain along a \emph{single} trajectory, is recursive and memory-efficient, easy to compute, and has an optimal rate of convergence in mean-squared sense, with provable finite sample guarantees.

\subsection{Contributions} 
We now briefly describe the main contributions of this work. 

\begin{enumerate}
\item We propose a novel recursive estimator (or algorithm) for estimating $\kappa(f)$ that requires $O(1)$ computation at each step, does not require storing any historical samples, and enjoys optimal $O(\frac{1}{n})$ rate of convergence for the mean-squared error (MSE). Here, $n$ refers to the total number of samples generated. 

The proposed estimator does not require any prior knowledge of the run length. In addition,  we establish finite sample bounds on MSE of the proposed estimator that hold for any run length. In an earlier version of this paper \cite{pmlr-v235-agrawal24a}, we  proposed an algorithm that required generating samples along two independent trajectories of the underlying Markov chain. But this may not always be feasible. In this version, we develop a novel \emph{single-trajectory} algorithm.

\item Our algorithm can be specialized to estimate the stationary variance of a Markov chain. It has similar convergence guarantees, as well as all the other desirable properties, even in this setting. Additionally, it can be used to design a recursive estimator for the variance in i.i.d. setting, which requires $O(1)$ memory and computation at each step, and has an optimal $O(\frac{1}{n})$ rate of convergence for MSE.  

\item In contrast to prior work on DTMC variance estimation, a key idea behind our approach is to use stochastic approximation (SA) for statistical estimation problems to develop estimators with provably-good finite sample guarantees. In particular, we work with an expression for the DTMC asymptotic variance in terms of the solution of the so-called Poisson equation (these solutions are called value functions). We use linear SA on this expression to develop an algorithm for variance estimation. Our algorithm uses Temporal Difference (TD) learning from prior work for estimating the value function \cite{zhang2021finite}. {We also obtain finite sample bounds for TD learning, improving the MSE bound of \cite{zhang2021finite} from $O(\frac{\log n}{n})$ to $O(\frac{1}{n})$, which is of independent interest.}

The mean square guarantees are obtained by first establishing that the underlying matrix of our linear SA is Hurwitz, and then builds upon recent work \cite{zhang2021finite} on finite sample mean square error bounds for linear SA. A novel step in our proof is the use of Poisson equation (again) to handle Markov noise, which enables us to shave off the multiplicative $\log n$ factor in the error. 

\item We generalize our estimator in several directions. 

\begin{enumerate}
    \item First, we develop extensions of it to estimate the covariance matrix of a vector-valued function $f$. 

    \item Second, we extend our estimator to the settings where the underlying Markov chain has a large state space. Here, we consider linear  approximations of the value function using a small number of feature vectors. We then use this approximate value function to estimate the asymptotic variance.

    \item Finally, we consider the average reward RL framework in the risk-sensitive setting. We propose asymptotic variance as a natural measure of risk in contrast to the stationary variance studied in prior work. We then focus on evaluating the asymptotic variance of a given policy. We adapt our variance estimator to this setting, and obtain a novel TD like algorithm and establish its finite sample error bounds. 
\end{enumerate}
    
\end{enumerate}

\subsection{Related Literature}\label{sec:lit}
We now discuss the existing estimators for $\kappa(f)$ in details, which have been extensively studied in literature. We remark that most of these previous estimators have been studied under more general setups than considered in this work. However, in this section, we present their properties for the specific setting of finite state DTMC. Furthermore, we point out that asymptotic variance has also been referred to as time average variance constant (TAVC) in several previous works. 

%\begin{comment}

\begin{table*}[t]
\begin{center}
\caption{Summary of the results on estimators for $\kappa(f)^{[a]}$}
\label{table:results2}
\renewcommand{\arraystretch}{1}
\centering
%\vskip 0.1in
\begin{small}
\begin{tabular}{ |c|c|c|c|c|}
\toprule
\multirow{2}{*}{\centering Estimator} &  \multirow{2}{12em}{\centering Representative references (non exhaustive)} & \multirow{2}{*}{\centering MSE}& \multirow{2}{4.5 em}{\centering Recursive$^{[b]}$~} & \multirow{2}{5.5 em}{\centering Memory requirement$^{[c]}$~} \\
 & &    &  & \\
\midrule

{Batched Means (BM)} & \cite{schmeiser1982batch,chien1997large} & \multirow{3}{*}{ $O\lrp{\frac{1}{n^{2/3}}}$}  & \multirow{3}{*}{\xmark} & \multirow{3}{*}{$O(n)$}\\\cline{1-1}
\multirow{2}{*}{Overlapping BM (OBM)} & {\cite{flegal2010batch}} &   &  & \\
 & {\cite{meketon1984overlapping}} &   &  & \\

\midrule
Block bootstrap & \cite{lahiri1999theoretical,nordman2009note} & $O\lrp{\frac{1}{n^{2/3}}}$    & \xmark   & $O(n)$   \\

\midrule
Standardized & \cite{goldsman1990properties} & \multirow{2}{*}{$O\lrp{\frac{1}{n^{4/5}}}$}  & \multirow{2}{*}{\xmark}  & \multirow{2}{*}{$O(n)$} \\
time series (STS) &  \cite{aktaran2011mean} & &  &  \\

\midrule
Recursive BM &\cite{wu2009recursive,yau2016new}  & $O\lrp{\frac{1}{n^{2/3}}}$ & \checkmark & $O(1)$ \\

\midrule
Jackknifed-(O)BM/STS & \cite{dingecc2015jackknifed} & $O\lrp{\frac{\log n}{n}}$ & \xmark & $O(n)$  \\

\midrule
Regenerative& \cite{crane1974simulating} & \multirow{2}{*}{$O\lrp{\frac{1}{n}}^{[d]}$} & \multirow{2}{*}{\checkmark} & \multirow{2}{*}{$O(1)$}  \\
simulation & \cite{glynn1987joint} &  & & \\

\midrule
\midrule
%\rowcolor{green!20} 
{ Linear SA} & { This work} &$ O\lrp{\frac{1}{n}}^{[e]}$  & \checkmark & $O(1)$ \\
\bottomrule

\end{tabular}
\begin{tablenotes}
    \noindent[a]~ Different works in literature estimate $\kappa(f)$ under different conditions on the underlying process. However, in this table, we present their rates and memory requirements for the setting of finite state DTMC.\newline
    \noindent[b]~ Estimator at step $n$ can be expressed as a function of various estimates at $n-1$, and can be computed in a constant time (independent of $n$). A non recursive estimator requires re-computing the estimator from scratch at each step. \newline
    \noindent[c]~  Memory required for computing the estimator at step $n$ (as a function of $n$). \newline
    \noindent[d]~ 
     Asymptotic rates of convergence have been studied. However, finite-sample guarantees are not known.\newline
    \noindent[e]~ Finite sample bound on MSE.
\end{tablenotes}
\end{small}
\end{center}
\end{table*}

%\end{comment}

\paragraph*{Batch Means (BM)} A popular estimator for $\kappa(f)$ is the BM estimator \cite{conway1959some,schmeiser1982batch}, which we informally describe next. Consider $n$ observations of the function $f$ evaluated on the $n$ consecutive states of the underlying Markov chain, and divide these observations into $b$ non-overlapping batches, each of size $m$ (for simplicity, assume that $n = mb$). For $i\in (b)$, let $m_i$ denote the sample mean of the observations from batch $i$. Then, the BM estimator corresponds to the sample variance of $m_i$ scaled by the batch size $m$. BM and its variants have been extensively studied (see, \cite{schmeiser1982batch, meketon1984overlapping,carlstein1986use, welch1987relationship,kunsch1989jackknife,alexopoulos2007overlapping,meketon2007overlapping,alexopoulos2011overlapping}). See \cite{politis1999subsampling} for an introduction to the different estimators and their relations. Roughly speaking, here, the batch size $m$ governs the bias, while the number of batches $b$ governs the variance of the estimator.

\cite{glynn1991estimating} establish limitations of BM in the settings where the number of batches $b$ does not increase with the total number of samples $n$. \cite{glynn1992asymptotic} establish conditions that ensure the asymptotic validity of BM. 
\cite{chien1997large} show that BM with both $b$ and $m$ increasing with $n$, is asymptotically unbiased (as $n$ increases), and converges to $\kappa(f)$ in mean square. In addition, they also show an asymptotic equality for MSE of BM, establishing it to be $O(\frac{1}{n^{2/3}})$.  

\paragraph*{Overlapping BM} While BM may look wasteful, overlapping BM \cite{meketon1984overlapping, meketon2007overlapping} uses $b$ overlapping batches, each of size $m$, where the consecutive batches use just $1$ additional observation, along with $m-1$ previous (consecutive) ones. Here, total number of samples satisfy $n=m+b-1$. When $n$ is known ahead of time, 
\cite{flegal2010batch} establish $O(\frac{1}{n^{2/3}})$ MSE for overlapping BM estimator. Notably, overlapping BM has the same MSE as BM (order wise) since the bias of the two estimators is the same, and the variance of the overlapping BM is $\frac{2}{3}$ times that of BM (improvement only by a constant factor). Thus, both BM and overlapping BM have a sub-optimal rate of convergence for MSE. See \cite{alexopoulos2011overlapping} for a survey of results on BM and overlapping BM estimator.  

In addition to being sub-optimal, another drawback of the classical and overlapping BM is that either their error guarantees are only satisfied at time $n$ and are not anytime, or they are not recursive (sequential), i.e., one needs to re-compute the estimator at each time using computation that is linear in the number of observations. This is because the batch sizes are predetermined, and depend on $n$. Moreover, one requires storing all the previous observations in order to re-compute the estimator at each step, hence, $O(n)$ computation at time $n$. \cite{wu2009recursive} develop a recursive BM  estimator ($O(1)$ computation at each time) with increasing batch sizes that are independent of $n$. Their estimator does not require storing the past observations, improving the memory requirement to $O(1)$, and is asymptotically unbiased. However, \cite{wu2009recursive} only give asymptotic guarantees, and establish a sub-optimal rate of $O(\frac{1}{n^{2/3}})$ for MSE. \cite{yau2016new} also propose a different recursive BM estimator with similar guarantees.  

%\cite{yau2016new} propose two other recursive estimators named, Trapezoidal Selection Rule (TSR) and the Parallelogram Selection Rule (PSR), based on the shapes of the batches, and study their asymptotic behaviors, including consistency and convergence rates. They show that PSR is uniformly better than \cite{wu2009recursive} at a cost of larger memory requirement of $O(n^\alpha)$, for some $\alpha\in (0,\frac{1}{2})$. In contrast, TSR preserves the memory requirement at $O(1)$ and has a smaller MSE compared to \cite{wu2009recursive} in some cases. Their estimator too does not require knowing $n$ in advance, and requires $O(1)$ computation at each step. However, it too has a sub-optimal rate of convergence for MSE of $O(\frac{1}{n^{2/3}})$. \shubhada{Compress this paragraph to 1 sentance. Remove details.}

\paragraph*{Other estimators} Several bootstrap-based estimators have also been proposed in literature  \cite{kunsch1989jackknife, liu1992moving,carlstein1986use,romano1992circular,politis1994stationary}. However, these are computationally intensive, and hence, we do not discuss them in details in this work. \cite{nordman2009note, lahiri1999theoretical} study the asymptotic rates for bias and variance of the different bootstrap-based estimators. %{\color{red}; results summarized in Table~\ref{table:results2}}.  
 We refer the reader to \cite{lahiri2013resampling} for an introduction to this approach. \cite{schruben1983confidence}  develop standardized time series based estimators, extensively studied in \cite{goldsman1984asymptotic,glynn1988new,goldsman1990properties,goldsman1990note,foley1999confidence, aktaran2011mean}. \cite{trevezas2009variance} propose maximum likelihood estimator for $\kappa(f)$, and prove that it is strongly consistent and asymptotically normal. 

\paragraph*{Jackknifed versions of estimators} Since a significant contribution to MSE of BM and its variants is due to bias of the estimator, \cite{dingecc2015jackknifed} propose using jackknife \cite{tukey1958bias} to reduce it at a cost of modest increase in variance of the estimator. They develop jackknifed versions of BM and overlapping BM estimators with a significantly reduced MSE of $O(\frac{\log n }{n})$. However, these are not recursive, and require $O(n)$ computation and $O(n)$ memory at time step $n$, rendering them less practical.  

\paragraph{Regenerative simulation} Another class of estimators that have been extensively studied at least since 1970s is the regeneration-based estimators. These depend on existence of regeneration points in the underlying process at which the process can be thought to probabilistically start afresh and evolve independent of the past. Note that in a finite state DTMC, the (random) times of successive visits of the Markov chain to a particular state can serve as the regeneration points. Hence, loosely speaking, a DTMC path can be thought of as several independent cycles pasted together. A number of past works have exploited this property of the underlying  process to design efficient recursive estimators for $\kappa(f)$ (see, \cite{crane1974simulating,glynn1987joint, glynn1993conditions,henderson2001regenerative, glynn2002necessary} and \cite{glynn2006simulation} for a nice exposition), which can be implemented in an online fashion using $O(1)$ computation. Moreover, central limit theorems (CLTs) are established for these estimators, which establish an optimal asymptotic error rate of $O(\frac{1}{\sqrt{n}})$ (that is comparable to 
$O(\frac{1}{n})$ MSE). However, to the best of our knowledge, finite time bounds of these estimators have not been established. While the CLT strongly suggests that one should be able to obtain a finite time bound of 
$O(\frac{1}{n})$ MSE, the explicit dependence of the error on the parameters of the underlying chain is not known. In contrast, we propose an alternate approach to Markov chain variance estimation based on SA, and we present explicit finite time bounds on the MSE.

%also be shown to have optimal $O(\frac{1}{n})$ MSE. Moreover, these estimators can also  We refer the reader to \cite{glynn2006simulation} for an overview of the basic ideas underlying the use of regenerative structure of a Markov chain in designing efficient estimators. Our work presents an alternate approach to Markov chain variance estimation, and we present explicit finite time bounds on the MSE.}

\paragraph*{Poisson equations} Finally, \cite{douc2022solving} resort to an alternative formulation for $\kappa(f)$ involving solution to a certain Poisson equation, propose a coupled Markov chains-based estimator for $\kappa(f)$, and provide its asymptotic convergence guarantees. The work that is closest to ours is \cite{BentonThesis} which proposed a SA-based estimator for $\kappa(f
)$, and established its asymptotic convergence. While their estimator is recursive, and requires $O(1)$ memory and computation at each step, they do not provide either the rates of convergence, nor finite-sample bounds on MSE of the estimator. In the current work, we propose a different SA-based estimator, along with finite-sample guarantees on its MSE. 
 
Note that the focus of this work is to estimate the asymptotic variance  $\kappa(f)$. To this end, as we will see later, we design a TD-like linear SA algorithm. This problem of estimating $\kappa(f)$ is very different from studying the variance of iterates of any particular  algorithm like TD, or designing algorithms with minimum variance of the iterates themselves. These latter objectives too are well-studied in literature; cf. \cite{devraj2017fastest,yin2020asymptotically,chen2020explicit,ChenSGD2020,hao2021bootstrapping,zhu2023online}. In contrast to these orthogonal prior works, which focus on studying the variance of TD learning and its variants, we do \textit{not} study the variance of a linear SA algorithm itself. We use linear SA simply as a tool for solving the variance estimation problem. To further elaborate on this distinction, for concreteness, consider the setting of Markov Decision Processes (MDPs), where the transition probabilities are known. Here, for evaluating a given policy, one can use value iteration (VI). Since the MDP is known, there is no variance associated with the iterates of VI. However, the asymptotic variance that we consider in this work is still well-defined, and exists due to inherent randomness associated with the probabilistic transitions of the MDP. This work focuses on estimating this asymptotic variance.

%\paragraph{Estimators for Iterates of SGD. } \shubhada{Include to highlight that we are using SA for Markov chain variance estimation. Don't explain these estimators. Include ICML rebuttal statements. Dump all reference around that in this paragraph.}  \cite{ChenSGD2020} estimate asymptotic variance of iterates of SGD with decaying step sizes -- these can be viewed as time non-homogeneous Markov chain. They propose a sequential BMs estimator in a batched setting. In particular, their batch sizes are also increasing, but depend on $n$,    the total number of samples. Their MSE is $O(\frac{1}{n^{1/2    }})$. \cite{zhu2023online} also estimate asymptotic variance of iterates of SGD with decaying step sizes, improving over \cite{ChenSGD2020}. They propose a fully-sequential batch means estimator that is similar to that in \cite{wu2009recursive}, but for non-homogeneous setting. They say that the batch size construction is different from that in \cite{wu2009recursive}, and that even proving convergence of the estimator is far from being trivial. Further the rate of MSE is $O(\frac{1}{n^{2/3}})$ -- they improve over \cite{ChenSGD2020}. \shubhada{Move to RL section.}

Later, in Section~\ref{sec:RL}, we present an application of the proposed estimator in the average reward RL setting. We postpone the related literature from RL to Section~\ref{sec:litRL}. 

\subsection{Setup and Asymptotic Variance}\label{sec:var}
In this subsection, we formally introduce the problem. To this end, we first define some notation. Let $\N$ denote the set of Natural numbers, $\R$ the set of Real numbers, and let $d\ge 1$. For a column vector $v\in \R^d$, let $v^T$ denote its transpose and let $\|v\|_2$ denote the $\ell_2$-norm of $v$. Let ${\bf 1}$ (or ${\bf e}$) and ${\bf 0}$ denote the all ones and all zeros vectors in appropriate dimensions, respectively.

We now describe the underlying dynamics, followed by formally defining the asymptotic variance. For $S\in\mathbb{N}$, consider an irreducible and aperiodic DTMC, $\calM$, on a finite state space $\calS = \lrset{1,\dots, S}$ with transition matrix $P$ and a unique stationary distribution, denoted by $\pi$ \cite[Chapter 1]{levin2017markov}. For $n\in\N$, let $X_n\in\calS$ denote the  state at time $n$. Consider $f:\calS\rightarrow \R$, a function defined on state space of $\calM$, and let $\bar{f}$ denote its stationary expectation, i.e., $\bar{f}:=\Exp{\pi}{f(X)}$. Since $f(X) - \bar{f}$ has stationary mean zero, there exists $V: \calS \rightarrow\R$ that solves the following Poisson equation \cite[Section 21.2]{douc2018markov}:
\[ f - \bar{f}{\bf 1} = V - PV. \numberthis\label{eq:PE} \]
Since $V$ solves~\eqref{eq:PE}, observe that $V + c{\bf 1}$ for any constant $c\in\R$ is also a solution. Let $V^*$ denote the solution normalized so that it is orthogonal to ${\bf 1}$ in $\ell_2$ norm, i.e., ${\bf 1}^TV^{*} = 0$. It is well-known that the set of solutions of~\eqref{eq:PE} takes the form $S_f:= \lrset{V^* + c{\bf 1}| c\in \R}$ \cite[Section 21.2]{douc2018markov}.

\iffalse
{\color{gray} Next, consider the partial sum of $f$
\[ S_n(f) := \sum\limits_{k=0}^{n-1}f(X_k). \]
It is well known that in this setting, Central Limit Theorem (CLT) holds \cite[Chapter 21]{douc2018markov}, and implies that for a positive constant $\kappa(f)$,
\[\frac{S_n(f)}{\sqrt{n}} \Rightarrow N(0, \kappa(f)) \quad \text{ as } \quad n\rightarrow\infty,  \]
where the above convergence is in distributions, and $N(0,\kappa(f))$ denotes the Gaussian distribution with mean $0$ and variance $\kappa(f)$. We refer the reader to \cite{billingsley2013convergence} for an exposition on convergence in distribution.} 
\fi

For a Markov chain starting in state $x_0$, the asymptotic variance is given by
\begin{equation}\label{eq:var_def}
    \lim\limits_{n\rightarrow\infty} ~ \V\lrs{\frac{1}{\sqrt{n}}\sum\limits_{k=0}^{n-1}f(X_k) \bigg| X_0 = x_0},
\end{equation}
and is independent of the starting state (Proposition~\ref{prop:var} below). This follows from geometric mixing of $\mathcal M$.  We denote this constant by $\kappa(f)$ (or $\kappa$, since $f$ is fixed). In this paper, we assume that $|f|$ is bounded. Henceforth, without loss of generality, this bound is assumed to be $1$. Below, we first present equivalent formulations for $\kappa(f)$ that will be useful in designing efficient algorithms for estimating it. 

\begin{proposition}\label{prop:var}
        Given a irreducible and aperiodic DTMC, asymptotic variance defined in~\eqref{eq:var_def} is a constant independent of the starting state, and is given by 
        \[  \kappa(f) = \lim\limits_{n\rightarrow\infty} ~ \V\lrs{ \frac{1}{\sqrt{n}}\sum\limits_{k=0}^{n-1} f(X_k) \bigg| X_0 \sim \pi }. \numberthis\label{eq:kmu0}\]
Furthermore, given any solution $V \in S_f$ to~\eqref{eq:PE}, 
    \begin{align*}
        \kappa(f) 
        & =~~ \Exp{\pi}{(f(X) - \bar{f})^2 } \numberthis\label{eq:kmu1} + 2  \lim\limits_{n\rightarrow\infty}\sum\limits_{j=1}^{n-1}\Exp{\pi}{ \lrp{f(X_0)- \bar{f}}\lrp{ f(X_j) - \bar{f} } }  \\
        &= ~~2 \Exp{\pi}{(f(X)-\bar{f})V(X)} - \Exp{\pi}{ (f(X) - \bar{f})^2} \numberthis\label{eq:kmu2}\\
        &= ~~ \Exp{\pi}{V^2(X)}  - \Exp{\pi}{ (P V)^2(X)}.\numberthis\label{eq:kmu3}
    \end{align*}
\end{proposition}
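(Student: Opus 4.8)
The plan is to establish the four identities in the order listed, since each builds on the previous one, and to prove starting-state independence together with \eqref{eq:kmu1}. Throughout I write $g := f - \bar{f}\mathbf{1}$, so that $\Exp{\pi}{g(X)} = 0$ and the Poisson equation \eqref{eq:PE} reads $g = V - PV$. I also abbreviate the $\pi$-weighted inner product $\langle a,b\rangle_\pi := \sum_x \pi(x)a(x)b(x) = \Exp{\pi}{a(X)b(X)}$, and note that $\Exp{\pi}{g(X_0)g(X_j)} = \langle g, P^j g\rangle_\pi$ by conditioning on $X_0$.

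For \eqref{eq:kmu0} and \eqref{eq:kmu1}, I would expand the variance of the partial sum under the stationary chain as $\frac{1}{n}\V[\sum_{k=0}^{n-1}g(X_k)]$ and use stationarity, so that $\Exp{\pi}{g(X_i)g(X_j)}$ depends only on $|i-j|$. The double sum then collapses to $\Exp{\pi}{g^2(X)} + 2\sum_{j=1}^{n-1}\tfrac{n-j}{n}\langle g, P^j g\rangle_\pi$. Geometric mixing of $\calM$ (noted after \eqref{eq:var_def}) guarantees that $|\langle g, P^j g\rangle_\pi|$ decays geometrically in $j$, so the series is absolutely convergent and its summands are dominated by a summable geometric sequence uniformly in $n$; hence the Cesàro weights $\tfrac{n-j}{n}\to 1$ may be dropped in the limit, yielding \eqref{eq:kmu1}. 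For the fixed-start version \eqref{eq:var_def}, the cleanest argument is the Poisson (martingale) decomposition $\sum_{k=0}^{n-1}g(X_k) = V(X_0) - V(X_n) + \sum_{k=1}^{n}M_k$, where $M_{k+1} := V(X_{k+1}) - PV(X_k)$ is a martingale difference: the boundary term $V(X_0)-V(X_n)$ is bounded on the finite state space, hence negligible after scaling by $1/\sqrt{n}$, so the limiting variance is governed by the martingale part alone and is independent of $x_0$, giving equality with \eqref{eq:kmu0}.

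To pass from \eqref{eq:kmu1} to \eqref{eq:kmu2}, I would rewrite $\kappa(f) = 2\sum_{j=0}^{\infty}\langle g, P^j g\rangle_\pi - \langle g, g\rangle_\pi$ and evaluate the resolvent-type sum via the Poisson equation. Since $g = (I-P)V$, the partial sums telescope, $\sum_{j=0}^{N-1}P^j g = (I - P^N)V$, and geometric ergodicity gives $P^N V \to (\Exp{\pi}{V})\mathbf{1}$, so $\sum_{j=0}^{\infty}P^j g = V - (\Exp{\pi}{V})\mathbf{1}$. Taking the $\pi$-inner product with $g$ and using $\langle g, \mathbf{1}\rangle_\pi = \Exp{\pi}{g(X)} = 0$ collapses this to $\langle g, V\rangle_\pi = \Exp{\pi}{(f(X)-\bar{f})V(X)}$, which is \eqref{eq:kmu2}; the same computation shows the right-hand side is unchanged under $V \mapsto V + c\mathbf{1} \in S_f$, confirming independence of the chosen solution.

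Finally, \eqref{eq:kmu3} follows by substituting $g = V - PV$ into \eqref{eq:kmu2} and expanding $2\Exp{\pi}{(V-PV)V} - \Exp{\pi}{(V-PV)^2}$: the $\pm 2\Exp{\pi}{V\cdot PV}$ cross terms cancel and the $\Exp{\pi}{V^2}$ terms combine to leave $\Exp{\pi}{V^2(X)} - \Exp{\pi}{(PV)^2(X)}$. As a consistency check, the martingale route of the second paragraph reaches \eqref{eq:kmu3} directly, since the stationary conditional second moment of the increments is $\Exp{}{M_{k+1}^2 \mid X_k} = P(V^2)(X_k) - (PV(X_k))^2$, whose $\pi$-average equals $\Exp{\pi}{V^2} - \Exp{\pi}{(PV)^2}$ using $\pi P = \pi$. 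The main obstacle is the analytic justification that the autocovariance series converges and that the fixed-start variance shares the stationary limit; both hinge on the geometric mixing of $\calM$, whereas the manipulations producing \eqref{eq:kmu2} and \eqref{eq:kmu3} are purely algebraic once the resolvent identity $\sum_{j\ge 0}P^j g = V - (\Exp{\pi}{V})\mathbf{1}$ is in hand.
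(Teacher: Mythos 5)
Your proposal is correct, and for the identities \eqref{eq:kmu1}--\eqref{eq:kmu3} it follows essentially the same path as the paper: expand the stationary variance, use stationarity and geometric mixing to drop the Ces\`aro weights $\tfrac{n-j}{n}$ (the paper's Lemma on $\gamma(k)$ does exactly this), identify $\sum_{j\ge 0}P^j g$ with $V$ up to a constant via the Poisson equation (the paper uses the explicit series form $V(x)=\sum_k \E{f(X_k)-\bar f \mid X_0=x}+c$, you use the equivalent telescoping/resolvent form $\sum_{j=0}^{N-1}P^jg=(I-P^N)V$), and obtain \eqref{eq:kmu3} by the purely algebraic substitution $g=V-PV$. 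Where you genuinely diverge is the proof that the fixed-start limit \eqref{eq:var_def} exists and is independent of $x_0$. The paper proves this by brute force: it writes out the covariance expansion under an arbitrary initial distribution $\pi_0$, and bounds the difference from the corresponding expansion under $\pi$ term by term using total-variation mixing estimates ($|P^k(x,\cdot)-\pi|_{\mathrm{TV}}\le C\alpha^k$). You instead use the martingale decomposition $\sum_{k=0}^{n-1}g(X_k)=V(X_0)-V(X_n)+\sum_{k=1}^{n}M_k$ with $M_{k+1}=V(X_{k+1})-PV(X_k)$, kill the bounded boundary terms (and, implicitly, the $O(\sqrt n)$ cross-covariance with the martingale sum via Cauchy--Schwarz --- worth stating explicitly) after the $1/n$ scaling, and let the orthogonality of martingale increments plus ergodicity of the Ces\`aro averages of $P(V^2)-(PV)^2$ deliver both the starting-state independence and \eqref{eq:kmu3} in one stroke. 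Your route is shorter and more structural; the paper's route is more elementary in that it never invokes martingale orthogonality and produces explicit geometric error bounds of the form $C\alpha^k$ that quantify how fast the fixed-start variance approaches the stationary one. Both are complete proofs.
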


It is worth noting that the first term in~\eqref{eq:kmu1} accounts for the per-step stationary variance, while the second term encompasses the temporal correlation introduced due to the Markov chain structure. These different representations for asymptotic variance are well known in literature at least since 1970s \citep{maigret1978theoreme}. Also see \cite[Theorem 17.5.3]{meyn2012markov}, \cite[Theorem 21.2.6]{douc2018markov}, and more recently, \cite{glynn2024solution}, for similar formulations. For completeness, we give a proof of the Proposition in  Appendix~\ref{app:proof_prop:var}.

Continuing, the second representation in equation \eqref{eq:kmu2} formulates the variance of $f(\cdot)$ in terms of the corresponding solution of the Poisson equation~\eqref{eq:PE}. Note that this representation isn't affected by choice of $V$ (any constant shift in $V$ doesn't affect $\kappa$). We will use a modification of the form in~\eqref{eq:kmu2} to design a linear SA algorithm to estimate $\kappa$ for a given function $f$.  

Finally, the third equality follows by using the Poisson equation to replace $(f(X) - \bar{f})$ terms in~\eqref{eq:kmu2} by $V(X) - PV(X)$.  

\subsection{Stochastic Approximation: An Overview}
Before proceeding to design an SA-based estimator for $\kappa(f)$, in this subsection, we give a brief overview of the SA updates we will repeatedly use throughout the rest of the paper. The earliest SA algorithms, which are iterative updates that can be used to solve  for roots of certain functions, date back at least to \cite{robbins1951stochastic}. While the asymptotic convergence of SA algorithms has long been known \cite{borkar2008stochastic}, finite sample guarantees for linear SA algorithm have only recently been established \cite{srikant2019finite, zhang2021finite}. In this work, we will use SA for finding averages as well as roots of linear functions, which we discuss next.

\paragraph*{Mean Estimation with Step Sizes} An SA algorithm to estimate $\Gamma := \Exp{\pi}{ g(X) }$ for a function $g$ using samples $g(X_0), \dots, g(X_n)$, corresponds to the following recursive update with an appropriately chosen step size $\alpha_n > 0$, and an initial iterate $\Gamma_0$:
\[ \Gamma_{n+1} = (1-\alpha_n)\Gamma_{n} + \alpha_n g(X_n). \numberthis\label{eq:SAForFancyAvg}  \]
Observe that for $\Gamma_0 = 0$ and $\alpha_n = \frac{1}{n+1}$, this reduces to the classical empirical average estimator for mean. Moreover, if $X_0, \dots, X_n$ are i.i.d. according to $\pi$, then empirical mean is known to converge almost surely to $\Gamma$ (from Law of Large Numbers) under mild regularity conditions. Similar convergence results hold for averages in~\eqref{eq:SAForFancyAvg} under certain regularity conditions on $\alpha_n$ and data-generating distributions. 

\paragraph*{Linear SA} For $m> 0$ and $d > 0$, let $\A \in \R^{m\times d}$ and $\vb \in \R^d$.  We want to estimate a solution $\Gamma \in \R^d$ to the equation $\A \Gamma + \vb = 0$ using noisy observations $\A_1, \dots, \A_n$ and $\vb_1, \dots, \vb_n$. An SA algorithm for this corresponds to the following update with a well chosen step size $\alpha_n > 0$, as well as an initial iterate $\Gamma_0$:
\[ \Gamma_{n+1} = \Gamma_n + \alpha_n (\A_n \Gamma_n + \vb_n). \numberthis\label{eq:SAForLinear} \]

As we will see in the following sections, these update methods play a crucial role in arriving at a simple recursive estimator that satisfies all the desirable properties discussed earlier in Section~\ref{sec:intro}.

\section{Variance Estimation: Tabular Setting}\label{sec:varest:tabular}
With the expressions for asymptotic variance derived in Proposition~\ref{prop:var} above, in this section, we design an algorithm for estimating it. A natural first approach is to use the Monte Carlo technique for estimating $\kappa(f)$, for example, the Batch Means estimator and its variants, which proceed by appropriately truncating the infinite summation in~\eqref{eq:kmu0}. As discussed earlier in Section~\ref{sec:lit}, these approaches suffer from either being non-recursive, computationally demanding, having sub-optimal rates of convergence, or a combination of these. We refer the reader to Section~\ref{sec:lit} and Appendix~\ref{app:BM} for a discussion. 

%In this work, we design an asymptotically-unbiased SA-based algorithm that is fully recursive (adaptive) and is easy to compute.  

%Note that \cite{zhu2023online} design an adaptive version of BM estimator for variance of iterates of a specific online algorithm (stochastic gradient descent). However, their estimator exploits the specific structure of the Markov chain induced by the chosen algorithm, and cannot be directly used in our setting. Additionally, \cite{wu2009recursive} design a recursive BM estimator. However, its MSE is large, and hence, sub-optimal. 

\subsection{The Algorithm}
Since the equivalent formulation for $\kappa$ in~\eqref{eq:kmu1} involves an infinite summation, algorithms approximating it using a finite summation could possibly suffer from being non-adaptive as the vanilla Monte Carlo approaches. Instead, we design a fully-adaptive TD-type algorithm for estimating $\kappa$, using SA for a modification of the expression in~\eqref{eq:kmu2} (Equation~\eqref{lem:var_form_twotraj} below). This alternative representation helps us to design a linear SA algorithm with finite-sample guarantees on the mean (squared) estimation error. For a solution $V$ of~\eqref{eq:PE}, let $\bar{V} = \Exp{\pi}{V(X)}$. Then,  
\begin{equation}\label{lem:var_form_twotraj}
    \kappa(f) = \Exp{\pi}{2f(X)V(X) - 2f(X)\bar{V} - f^2(X) + f(X)\bar{f}}.
\end{equation}
This immediately follows from rearranging terms in~\eqref{eq:kmu2}. 

\begin{algorithm2e}[tb]
\RestyleAlgo{ruled}
\DontPrintSemicolon
\SetNoFillComment
\SetKwInOut{Input}{Input}\SetKwInOut{Output}{Output}
%\setstretch{1.05}
\caption{Estimating $\kappa(f)$: Tabular Setting}\label{alg:PE}
    %\vspace{0.4em}
    \BlankLine

    \KwIn{Time horizon $n > 0$, constants $c_1 > 0$, $c_2 > 0$, $c_3 > 0$ and step-size sequence $\lrset{\alpha_k}$.}
    \BlankLine

    {\bf Initialization: } $\bar{f}_0  = 0$, $V_0 = \vec{\bf  0} $, $\bar{V}_0 = 0$, and $\kappa_0 =0$.\\
    \BlankLine

\For{$k\leftarrow 1$ \KwTo $n$}{
\BlankLine

Observe ($X_k, f(X_k), X_{k+1}$).\;

\BlankLine

\tcp{\textbf{Average reward $\bar{f}$ estimation}} 
        \vspace{-1.5em}
        \begin{align}
            \bar{f}_{k+1} = \bar{f}_k + c_1\alpha_k (  f(X_k) -\bar{f}_k ).\label{eq:JmuUpdate}
        \end{align}
\;
        \vspace{-1.5em}

\tcp{\textbf{$V$ estimation}}
\vspace{-1.5em}
    \begin{align*}
    & \delta_k := f(X_k) - \bar{f}_{k} +  V_k(X_{k+1}) - V_k(X_k), \label{eq:deltaUpdate}\numberthis\\
    & V_{k+1}(x) = \numberthis\label{eq:QUpdate}
        \begin{cases}
            V_k(x) + \alpha_k \delta_k \lrp{1-{1}/{S}},  &\text{ for } x = X_k,\\
            V_k(x) -  {\alpha_k\delta_k}/{S}, &\text{ otherwise}.
        \end{cases}  
    \end{align*}
\;
\vspace{-1.5em}
\tcp{\textbf{$\bar{V}$ estimation}}
\vspace{-1.5em}
    \begin{align*}
    & \bar{V}_{k+1} := \bar{V}_k + c_2 \alpha_k ( V_k(X_k) - \bar{V}_k ), \label{eq:barVUpdate}\numberthis\\
    \end{align*}
\;
\vspace{-1.5em}

\tcp{\textbf{Asymptotic Variance $\kappa(f)$ estimation}}
\vspace{-1.5em}
        \begin{align}
        &\kappa_{k+1} =  (1-c_3\alpha_k) \kappa_k + c_3\alpha_k\lrp{ 2f(X_k)V_k(X_k) - 2f(X_k)\bar{V}_k - f^2(X_k) + f(X_k)\bar{f}_k }. \label{eq:kappaUpdate}
        \end{align}
        \vspace{-1.5em}
    }
 \Output{Estimates $\kappa_n$, $\bar{V}_n$, $V_n$, and $\bar{f}_n$} 
    
\end{algorithm2e}

\paragraph*{Estimating $\kappa(f)$}  For $n\ge 1$, and time steps $k = 1,2, \dots, n$, let $X_k$ represent the state of the underlying Markov chain. To estimate $\kappa(f)$, our algorithm uses SA for the  expression in~\eqref{lem:var_form_twotraj}, which corresponds to the mean estimation with step sizes, discussed in~\eqref{eq:SAForFancyAvg}. To get observations of the function being averaged, it first estimates $\bar{f}$, $V$, and the corresponding $\bar{V}$. Denote these estimates at time $k$ by $\bar{f}_k$, $V_k$, and $\bar{V}_k$. Equation~\eqref{eq:kappaUpdate} in Algorithm~\ref{alg:PE} corresponds to SA update for estimating $\kappa(f)$ using these estimates in~\eqref{lem:var_form_twotraj}.  

\paragraph*{Estimating $\bar{f}$} This corresponds to estimating $\Exp{\pi}{f(X)}$ using the latest estimate $\bar{f}_{k}$, as well as the new observed value $f(X_{k})$. We again use~\eqref{eq:SAForFancyAvg} for this step, which corresponds to update~\eqref{eq:JmuUpdate} in Algorithm~\ref{alg:PE}. 

\paragraph*{Estimating $V$} Recall that $V$ is a solution for~\eqref{eq:PE}, which is a linear equation in $V$. Hence, we will use the SA algorithm in~\eqref{eq:SAForLinear} to estimate it. Term $\delta_k$ in~\eqref{eq:deltaUpdate} in Algorithm~\ref{alg:PE} represents the SA adjustment term for estimating $V$ using the estimates $\bar{f}_k$, $V_k$ computed at the previous step and the observed function value $f(X_k)$. Notice that since~\eqref{eq:PE} may not have a unique solution, we particularly consider the one that is orthogonal to the all ones vector ${\bf 1} \in \R^S$ in $\ell_2$-norm. Thus, in each iteration of Algorithm~\ref{alg:PE}, we project $\delta_k$ to the subspace orthogonal to ${\bf 1}$. This corresponds to the adjustments made in the $V$ update in~\eqref{eq:QUpdate}, where $S$ denotes the size of state space $\abs{\calS}$ of the Markov chain $\calM$. 

\paragraph*{Estimating $\bar{V}$} This corresponds to the mean estimation update in~\eqref{eq:barVUpdate} using the estimates $\bar{V}_{k}$ and $V_{k}$ computed at the previous step (using~\eqref{eq:SAForFancyAvg}).

\begin{remark}[Justification for projection in Algorithm~\ref{alg:PE}]\label{rem:proj} Observe that $\kappa(f)$ is independent of the choice of $V$ (choice of solution of~\eqref{eq:PE} does not affect $\kappa$), since $\bar{V}$ in~\eqref{lem:var_form_twotraj} corresponds to the stationary expectation of the particular solution $V$ used in that formulation. Moreover, estimating a solution $V$ of~\eqref{eq:PE} does not require the projection step, since the SA for~\eqref{eq:PE} converges to \emph{some} solution; cf. \cite[Algorithm 1]{zhang2021finite}. To further elaborate, if in each iteration $k$ of Algorithm~\ref{alg:PE}, $V_k$ corresponds to an estimate for a different solution of~\eqref{eq:PE}, then as $k\rightarrow \infty$, $\{V_k\}$ can be still be shown to converge to \emph{some} solution of~\eqref{eq:PE}, say $V^{\dagger}$. However, to estimate $\kappa(f)$ using~\eqref{lem:var_form_twotraj}, we further need to estimate the stationary mean of $V^{\dagger}$, say $\bar{V}^{\dagger}$. However, since without projection, the estimates $V_k$ in Algorithm~\ref{alg:PE} may correspond to different solutions of~\eqref{eq:PE}, and do not necessarily all estimate $V^{\dagger}$, the iterates $\bar{V}_k$ are not guaranteed to converge to $\bar{V}^{\dagger}$. Hence, we introduce the projection step to ensure that for each $k$, $V_k$ correspond to the \emph{same estimate} for a solution of~\eqref{eq:PE} (which is the one orthogonal to $\bf 1$ in $\ell_2$ norm), and hence, $\bar{V}_k$ converge to \emph{its stationary expectation}. 
\end{remark}

\begin{remark} 
Notice that all the updates in Algorithm~\ref{alg:PE} in~\eqref{eq:JmuUpdate},~\eqref{eq:QUpdate},~\eqref{eq:barVUpdate}, and~\eqref{eq:kappaUpdate} at time $k$ use estimates from the previous time step $k-1$. Hence, the order in which these updates are performed is irrelevant. 
\end{remark}

\subsection{Convergence Rates}\label{sec:theorype}
We now bound the estimation error of the proposed algorithm. For each $k\in\N$, let $Y_k:= (X_k,X_{k+1}).$ Observe that $\mathcal M_2 := \lrset{Y_k}_{k\ge 1}$ is a Markov chain. Let its state space be denoted by $\calY$, which is a finite set. As for $\mathcal M$, $\mathcal M_2$ mixes geometrically fast \cite[Proposition 1]{bhatnagar2016multiscale} and has a unique stationary  distribution. 

Next, let $\mathcal F \in \R^{S}$ be the vector of function values, i.e., for $i\in\calS$, the $i^{th}$ coordinate of $\mathcal F$ equals $f(i)$, and $f_{\max} := \|\mathcal F\|_\infty$. Recall, without loss of generality, we assume that $f\in [-1,1]$, and hence, $f_{\max} \le 1$. Moreover,  $V^*$ was defined after~\eqref{eq:PE}, and let $D_\pi$ denote the diagonal matrix of the stationary distribution $\pi$ of $\calM$, i.e., $D_\pi(i,i) = \pi(i)$, for all $i\in\calS$. Define 
\[ \Delta_1:= \min\!\lrset{\!{\bf v}^T \! D_\pi(I\!-\!P) {\bf v} ~|~ {\bf v} \!\in\! \R^{S}, \|{\bf v}\|_2 = 1, {\bf v}^T\!{\bf 1} = 0}. \]
Clearly, feasible vectors $\bf v$ in $\Delta_1$ are non-constant vectors. Then, from \cite[Lemma 7]{tsitsiklis1997average}, we have that $ {\bf v}^T  D_\pi(I-P) {\bf v}  > 0.$ 
Since the feasible region in $\Delta_1$ is non-empty and compact, we get $\Delta_1 > 0$. 

Next, for the step-size constants $c_1$, $c_2$, and $c_3$ (inputs to Algorithm~\ref{alg:PE}), define
\begin{align}
\eta  :=  \lrp{c^2_1  + 5 + 2c^2_2 + 10c^2_3}^\frac{1}{2},\label{eq:eta} 
\end{align}
and let $\Theta_\pi := [\bar{f}~V^*~\bar{V}^*~\kappa(f)]$. The following theorem bounds the estimation error of Algorithm~\ref{alg:PE}, which immediately implies ${O}(\frac{1}{n})$ convergence rate for the mean-squared estimation error of $\kappa(f)$ to $0$. 

\begin{theorem}\label{th:finitebound}
Consider Algorithm~\ref{alg:PE} with $c_1$, $c_2$, and $c_3$ satisfying:
    \[c_1\ge \frac{1}{2\Delta_1} + \frac{\Delta_1}{2}, \qquad  \frac{5}{249}(5-2\sqrt{2})\Delta_1 \le c_3\le \frac{5}{249} (5+2\sqrt{2})\Delta_1, \]
    and
    \[ c_3 -\frac{498 c^2_3}{7\Delta_1} + \frac{7 \Delta_1}{498}   \le c_2 \le -3 c_3 + \frac{5 }{83} \sqrt{498 c_3 \Delta_1 - 17 \Delta^2_1} . \]
    Recall that $\eta$ and $\Theta_\pi$ were defined around~\eqref{eq:eta}. For a constant $B > 1$, let  $\xi_1 :=  3\lrp{1+\|\Theta_\pi\|_2}^2$, $\xi_2 := 112B(1+\|\Theta_\pi\|_2)^2$, and let $\Theta_n := [ \bar{f}_n~ V^T_n~\bar{V}_n~\kappa_n]$ denote the vector of estimates at step $n$.
    \begin{enumerate}%[label=(\alph*)]
        \item[(a)] Let $\alpha_i = \alpha$ for all $i$, such that     
        \[ \alpha < \min\lrset{ \frac{40}{\Delta_1}, \frac{1}{28B\lrp{1+ \frac{20\eta^2}{\Delta_1} }}}. \]
        Then, for all $n\ge 1$, 
        \[ \E{\|\Theta_n - \Theta_\pi\|^2_2} \le \xi_1\lrp{1-\frac{\Delta_1\alpha}{40}}^{n} +  \frac{20\xi_2 \alpha\eta^2}{\Delta_1} + \xi_2\alpha. \]

        \item[(b)]  Let $\alpha_i = \frac{\alpha}{i+h}$ for all $i \ge 0$, with $\alpha$ and $h$ chosen so that
        \[ h \ge \max\lrset{1+\frac{\alpha\Delta_1}{40}, 1+28B\lrp{ \frac{20\eta^2\alpha}{\Delta_1} + \alpha + \frac{20}{\Delta_1} }},\quad\text{and}\quad \alpha > \frac{40}{\Delta_1}.\]
        Then, for all $n\ge 0$, 
        \begin{align*}
        \E{\|\Theta_n- \Theta_\pi\|^2_2}  &\le \xi_1\lrp{\frac{h}{n+h}}^\frac{\alpha\Delta_1}{40} + \frac{5\xi_2e^2\eta^2(20+\Delta_1)\alpha^2}{(n+h)(\alpha\Delta_1 -40)} + \frac{\xi_2 \alpha}{n+h}.
        \end{align*} 
    \end{enumerate}
\end{theorem}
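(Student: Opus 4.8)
The plan is to recognize the four coupled recursions of Algorithm~\ref{alg:PE} as a single linear stochastic approximation of the form \eqref{eq:SAForLinear}. Stacking the iterates into $\Theta_k = [\bar f_k~V_k^T~\bar V_k~\kappa_k]$ and recalling $Y_k=(X_k,X_{k+1})$, each of \eqref{eq:JmuUpdate}, \eqref{eq:QUpdate}, \eqref{eq:barVUpdate}, \eqref{eq:kappaUpdate} is affine in the current estimate, so I would write $\Theta_{k+1}=\Theta_k+\alpha_k(\A(Y_k)\Theta_k+\vb(Y_k))$ for explicit matrix/vector valued functions $\A(\cdot),\vb(\cdot)$. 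The first task is to verify that the averaged system has $\Theta_\pi$ as its unique fixed point, i.e. $\bar\A\Theta_\pi+\bar\vb=0$ where $\bar\A:=\Exp{\pi}{\A(Y)}$, $\bar\vb:=\Exp{\pi}{\vb(Y)}$. This decouples across blocks: it is trivial for the mean-reward block, it is exactly the Poisson equation \eqref{eq:PE} (the TD fixed point of \cite{zhang2021finite}) for the $V$ block, it is $\bar V^*=\Exp{\pi}{V^*(X)}$ for the $\bar V$ block, and it is precisely the identity \eqref{lem:var_form_twotraj} for the $\kappa$ block.

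The crux is a \emph{quantitative} negative-drift (Hurwitz) property of $\bar\A$. Because of the feed-forward coupling ($\bar f$ feeds $V$; $V$ feeds $\bar V$ and $\kappa$; etc.), $\bar\A$ is block lower triangular with diagonal blocks $-c_1$, $-D_\pi(I-P)$ restricted to $\{v:v^T\mathbf 1=0\}$, $-c_2$, and $-c_3$. The projection in \eqref{eq:QUpdate} keeps every $V_k-V^*$ orthogonal to $\mathbf 1$, so the middle block acts on the subspace where $v^TD_\pi(I-P)v\ge\Delta_1\|v\|_2^2$ by the definition of $\Delta_1$ and \cite[Lemma 7]{tsitsiklis1997average}. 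I would then expand $\langle\theta,\bar\A\theta\rangle$ for $\theta=\Theta-\Theta_\pi$, producing the four strictly negative diagonal contributions $-c_1\tilde f^2$, $-\tilde V^TD_\pi(I-P)\tilde V\le-\Delta_1\|\tilde V\|_2^2$, $-c_2\tilde{\bar V}^2$, $-c_3\tilde\kappa^2$, together with cross terms created by the coupling. Bounding each cross term by Young's inequality and absorbing it into the diagonal terms yields $\langle\theta,\bar\A\theta\rangle\le-c\,\Delta_1\|\theta\|_2^2$ for an explicit constant $c$; the admissible ranges tying $c_1$ to $\Delta_1$, $c_3$ to $\Delta_1$, and $c_2$ to both $c_3$ and $\Delta_1$ in the statement are exactly the conditions under which this absorption closes with a single clean constant.

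With the drift established I would pass to the mean-square recursion. Expanding the square gives $\E{\|\theta_{k+1}\|_2^2}=\E{\|\theta_k\|_2^2}+2\alpha_k\E{\langle\theta_k,\bar\A\theta_k\rangle}+2\alpha_k\E{\langle\theta_k,(\A(Y_k)-\bar\A)\theta_k+(\vb(Y_k)-\bar\vb)\rangle}+\alpha_k^2\E{\|\A(Y_k)\theta_k+\vb(Y_k)\|_2^2}$. The first inner product is controlled by the drift above; the last term is $O(\alpha_k^2(1+\E{\|\theta_k\|_2^2}))$ with the constant governed by $\eta$, which bounds the relevant second moments of $\A$ and $\vb$; and the middle Markov-noise term is the source of the spurious $\log n$ factor in a naive mixing-time analysis. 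Here I would invoke the Poisson-equation device: solve $\hat\A-P\hat\A=\A-\bar\A$ (and similarly for $\vb$) componentwise, so that $\A(Y_k)-\bar\A=\hat\A(Y_k)-\Exp{}{\hat\A(Y_{k+1})\mid Y_k}$; summing against $\alpha_k$ telescopes the Markov part into a martingale-difference term of zero conditional mean plus boundary and step-size-difference terms of order $\alpha_k^2$, removing the mixing-time (hence $\log n$) dependence. This produces the one-step contraction $\E{\|\theta_{k+1}\|_2^2}\le(1-\Delta_1\alpha_k/40)\E{\|\theta_k\|_2^2}+C\eta^2\alpha_k^2$. Part~(a) then follows by unrolling with $\alpha_k\equiv\alpha$ (a geometric term plus an $O(\alpha)$ floor), and part~(b) by unrolling with $\alpha_k=\alpha/(k+h)$ via a standard induction that turns $\alpha\Delta_1/40>1$ into the $O(1/n)$ rate, with $\xi_1$ tracking the initial condition and $\xi_2$ the noise level.

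I expect the main obstacle to sit in the two novel steps. First, obtaining the explicit contraction constant requires a simultaneous balancing of \emph{all} cross terms at once, and it is this balancing that forces the coupled, seemingly unmotivated constraints among $c_1,c_2,c_3$; getting the bookkeeping to close with a single constant rather than a messy polynomial in the step-size parameters is the hard algebraic part. Second, and more conceptually, the Poisson-equation treatment of the Markov noise must be carried out so that the telescoped boundary terms are genuinely $O(\alpha_k^2)$ and the martingale term enters only at second order; verifying that the auxiliary Poisson solution $\hat\A$ is bounded uniformly on the relevant subspace is the delicate point, and it is precisely what shaves the multiplicative $\log n$ factor relative to \cite{zhang2021finite}.
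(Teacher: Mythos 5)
Your proposal follows essentially the same route as the paper: the paper formulates Algorithm~\ref{alg:PE} as the linear SA \eqref{eq:linearsa}, proves the semi-norm contraction of the averaged matrix on the subspace orthogonal to $\mathbf 1$ (Lemma~\ref{th:negdeflfa}, where the coupled constraints on $c_1,c_2,c_3$ arise from absorbing the cross terms into the diagonal drift), and then derives the MSE recursion via the Lyapunov expansion with the Markov-noise cross term handled by a Poisson equation for $\A(\cdot)-\A$ and $\vb(\cdot)-\vb$, telescoping it into a martingale-difference part plus $O(\alpha_k^2)$ residuals (Theorem~\ref{th:appfiniteboundlfa}), exactly as you describe. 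The only cosmetic differences are that the paper proves the general linear-function-approximation version first and specializes to the tabular case, and that the cross-term absorption is carried out by an exact minimization (Lemma~\ref{lem:aux1}) rather than Young's inequality, which is what produces the specific constants such as $\Delta_1/20$ and the explicit windows for $c_1,c_2,c_3$.
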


The above theorem follows from a more general result (Theorem~\ref{th:finiteboundlfa}) presented in the next section. We refer the reader to Appendix~\ref{app:proof_th:finitebound} for a formal reduction from the setting of Theorem~\ref{th:finiteboundlfa} to that in the above theorem. We now present some remarks about the bound in Theorem~\ref{th:finitebound} above. 

First, observe that since $\E{(\kappa_n - \kappa(f))^2} \le \E{ \|\Theta_n - \Theta_\pi\|^2_2 }$ and $\E{\|V_n - V^*\|^2_2} \le \E{\|\Theta_n - \Theta_\pi\|^2_2}$, the bound in Theorem~\ref{th:finitebound} also holds for MSE in estimation of $\kappa(f)$ as well as $V^*$.  

Theorem~\ref{th:finitebound}$(a)$ bounds the mean-squared estimation error of the proposed algorithm in case of constant step size. Though the first term in this bound decays exponentially fast as the number of iterations $n$ increase, the other two terms are constant and become a bottleneck after sufficiently large $n$. This is a well-known behavior of SA algorithms with constant step sizes. In fact, this algorithm suffers similar drawback of being non-recursive as the vanilla Monte Carlo BM estimator. In order to ensure the mean-squared error smaller than $\epsilon^2$ for $\epsilon> 0$ (or mean absolute error smaller than $\epsilon$), one needs to pick the step size $\alpha$ as a function of $\epsilon$ (also see Corollary~\ref{cor:samplecomplex} below). This drawback of being non-recursive is overcome by choosing the step size to be diminishing (as in Theorem~\ref{th:finitebound}$(b)$). 

Second, it has been shown recently that linear SA with constant step size, in presence of Markov noise, suffers from an asymptotic bias (as $n\rightarrow\infty$); see \cite{huo2023bias,nagaraj2020least}. This is in contrast to the i.i.d. noise setting, where the asymptotic bias is shown to be $0$ in \cite{mou2020linear,lakshminarayanan2018linear}. This motivates us to study SA with diminishing step-size, considered in Theorem~\ref{th:finitebound}$(b)$.

\begin{remark}
For diminishing step size, Algorithm~\ref{alg:PE} achieves ${O}(\frac{1}{n})$ rate of convergence (Theorem~\ref{th:finitebound}$(b)$). As in part $(a)$, the first term in the bound in part $(b)$ decays faster, and the other two terms are rate-determining $O(\frac{1}{n})$ terms. We believe that the rate of convergence of $\Omega(\frac{1}{n})$ for MSE is tight. In the special setting of i.i.d. noise with parametric distributions, this follows from the Cram\'er-Rao lower bound \cite{nielsen2013cramer}. We  discuss a simple estimator that achieves this rate in i.i.d. setting in Appendix~\ref{app:iidestimation}, and also propose a SA-based recursive estimator achieving the same rate in i.i.d. setting in the following section (Section~\ref{sec:statvar}). 
\end{remark}

Using bounds in Theorem~\ref{th:finitebound}, the corollary below presents the number of iterations of Algorithm~\ref{alg:PE} needed to have the mean estimation error bounded by $\epsilon$. It follows from setting the mean-squared estimation error bounds in Theorem~\ref{th:finitebound} to at most $\epsilon^2$.  %Observe that $c_1 = O(1/\Delta_1)$, $\eta = O(1/\Delta_1)$, $\xi_1 = O(\|Q^*_\mu\|^2_2)$, and $\xi_2 = O(\|Q^*_\mu\|^2_2/\Delta^2_1)$. 
We refer the reader to Section~\ref{app:samplecomplex} for a proof of Corollary~\ref{cor:samplecomplex}.

\begin{corollary}\label{cor:samplecomplex}
    To estimate $\kappa(f)$ using the iterates $\kappa_n$ generated by Algorithm~\ref{alg:PE} with diminishing step size, up to mean estimation error  $\E{\abs{\kappa_n - \kappa(f)}} \le \epsilon$, we require
    \[ n = O\lrp{\frac{1}{\epsilon^2}} O\lrp{  \frac{\|V^*\|^2_2}{\Delta^4_1}}. \]
\end{corollary}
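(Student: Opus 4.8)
The plan is to reduce the mean-absolute-error target to the mean-squared bound of Theorem~\ref{th:finitebound}(b) and then track how each constant in that bound scales in $\Delta_1$ and $\|V^*\|_2$. First, by Jensen's inequality and the coordinate-wise domination $\E{(\kappa_n - \kappa(f))^2} \le \E{\|\Theta_n - \Theta_\pi\|_2^2}$ recorded immediately after the theorem,
\[ \E{\abs{\kappa_n - \kappa(f)}} \le \sqrt{\E{(\kappa_n - \kappa(f))^2}} \le \sqrt{\E{\|\Theta_n - \Theta_\pi\|_2^2}}, \]
so it is enough to make the right-hand side of the part-(b) bound at most $\epsilon^2$.

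Next I would fix an admissible triple of step-size parameters and read off their orders from the theorem's constraints. Since $c_1 \ge \frac{1}{2\Delta_1} + \frac{\Delta_1}{2}$ while $c_2$ and $c_3$ are of order $\Delta_1$, definition~\eqref{eq:eta} makes $\eta^2$ of order $1/\Delta_1^2$ (the $c_1^2$ term dominates once $\Delta_1 \le 1$). I would then set $\alpha = 80/\Delta_1$, so that the exponent $\alpha\Delta_1/40 = 2$ and the denominator factor $\alpha\Delta_1 - 40$ is a positive constant; the admissibility requirement on $h$ then forces $h$ to be of order $1/\Delta_1^4$, since the bracketed term $\frac{20\eta^2\alpha}{\Delta_1}$ is of that order and dominates the other two summands. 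Finally, because $\Theta_\pi = [\bar{f}~V^*~\bar{V}^*~\kappa(f)]$ and $|f|\le 1$ forces $\bar f$, $\bar V^*$ and (via~\eqref{eq:kmu3}) $\kappa(f)$ to be controlled by $\|V^*\|_2$, both $\xi_1$ and $\xi_2$ are of order $\|V^*\|_2^2$.

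Substituting these orders into the two $O(\tfrac{1}{n+h})$ terms of part (b), the $\eta^2\alpha^2$ term dominates the bare $\xi_2\alpha$ term, and together they are of order $\frac{\|V^*\|_2^2}{\Delta_1^4\,(n+h)}$; setting this at most $\epsilon^2$ yields $n = O\lrp{\|V^*\|_2^2/(\epsilon^2\Delta_1^4)}$. It remains to check the leading polynomial-decay term $\xi_1\lrp{h/(n+h)}^{2}$: its own requirement on $n$ scales only like $\epsilon^{-1}$, so at the value of $n$ just found it is already $O(\epsilon^2)$ for small $\epsilon$ and does not inflate the order. Collecting the dominant requirement gives the claimed $n = O(1/\epsilon^2)\,O(\|V^*\|_2^2/\Delta_1^4)$.

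The main obstacle is the constant bookkeeping: one must verify that the chosen $\alpha$ and $h$ meet \emph{all} admissibility constraints of part (b) simultaneously, while keeping $\alpha\Delta_1 - 40$ bounded away from $0$ and the exponent $\alpha\Delta_1/40$ strictly above $1$ so that the first term decays faster than $1/n$. The qualitative driver of the final $\Delta_1^{-4}$ dependence is that $\eta^2$ is of order $1/\Delta_1^2$ (inherited from $c_1 \approx 1/\Delta_1$) and multiplies $\alpha^2$ of order $1/\Delta_1^2$ in the rate-determining term, while the denominator $\alpha\Delta_1 - 40$ stays constant; this product is precisely what carries the four powers of $1/\Delta_1$ into the bound.
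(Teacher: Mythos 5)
Your argument is correct and follows essentially the same route as the paper's proof in Appendix~\ref{app:samplecomplex}: both invert the Theorem~\ref{th:finitebound}(b) bound by noting $c_1 = O(1/\Delta_1)$, $c_2, c_3 = O(\Delta_1)$, hence $\eta = O(1/\Delta_1)$ and $\xi_1,\xi_2 = O(\|V^*\|^2_2)$, choosing $\alpha = O(1/\Delta_1)$ (your $\alpha = 80/\Delta_1$ is a concrete instance of the paper's optimization of $\alpha^2/(\Delta_1\alpha-40)$) and $h = O(1/\Delta_1^4)$, and then reading $n$ off the rate-determining $O(1/(n+h))$ term. The one small bookkeeping point: to get $\kappa(f)^2 = O(\|V^*\|_2^2)$ inside $\xi_1,\xi_2$ you should bound $\kappa(f) = O(\|V^*\|_2)$ via~\eqref{eq:kmu2} (using $|f-\bar f|\le 2$), since~\eqref{eq:kmu3} only yields $\kappa(f) = O(\|V^*\|_2^2)$; with that substitution your derivation coincides with the paper's.
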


The sample complexity in Corollary~\ref{cor:samplecomplex} depends on $|S|$ via $\Delta_1$, and $\|V^*\|_2$. Here, 
$V^* \in \R^{|S|}$, and hence, $\|V^*\|^2_2 = O(|S|^2)$. The dependence of $\Delta_1$ on  $|S|$ is more implicit, and relates to the mixing properties of the Markov chain induced by the policy. We make this dependence explicit in two specific examples. If the underlying Markov chain is a random walk on a complete graph (with $|S|$ vertices), then $\Delta_1 = O(1)$, and hence we get ${O}(|S|^2)$ dependence. On the other hand, if the underlying Markov chain is a random walk on a cycle graph \cite[Section 12.3.1]{levin2017markov}, then $\Delta_1 = O({1}/{|S|^2})$, and hence, we get a dependence of ${O}(|S|^{10})$ in the sample complexity.

\begin{remark}
Using Jensen's inequality, we have $\E{\abs{\kappa_n - \kappa(f)}} \le \sqrt{ \E{ (\kappa_n - \kappa(f))^2 } }, $ 
which then gives a bound on the mean estimation error. In particular, for the diminishing step-sizes of the form $\alpha_i = \frac{\alpha}{i+h}$ for all $i\ge 1$, Theorem~\ref{th:finitebound}(b) gives
$$\E{\abs{\kappa_n - \kappa(f)}} \le {O}\lrp{\frac{1}{\sqrt{n}}}.$$
\end{remark}

\begin{remark}
Consider the problem of estimating the standard deviation, $\sqrt{\kappa(f)}$. Mean-squared error for the estimator $\sqrt{\kappa_n}$ satisfies
$$ \E{ \lrp{\sqrt{\kappa_n} - \sqrt{\kappa(f)}}^2 } = \E{ \frac{({\kappa_n} - {\kappa(f)})^2}{\lrp{\sqrt{\kappa_n} + \sqrt{\kappa(f)}}^2} } \le \frac{1}{\kappa(f)}\E{ \lrp{{\kappa_n} - {\kappa(f)}}^2 },$$
which is at most ${O}(\frac{1}{n})$ for $\alpha_k = O(\frac{1}{n})$. \end{remark}

\subsection{Estimating Stationary Variance}\label{sec:statvar}

In the previous section, we saw an estimator for the asymptotic variance $\kappa(f)$, which captures the temporal correlations introduced due to the underlying Markov chain. Suppose you want to estimate the stationary variance, instead. Formally, this corresponds to estimating  $v(f):=  \Exp{\pi}{(f(X) - \bar{f})^2}.$ In Algorithm~\ref{alg:StationaryVariance}, we present a sequential (recursive) estimator for $v(f)$, that  also satisfies all the properties discussed in Section~\ref{sec:intro}. Our estimator  corresponds to SA update for the following equation: 
\[ v(f) = \Exp{\pi}{f^2(X) - f(X) \bar{f}}.\]

\begin{algorithm2e}[tb]
\RestyleAlgo{ruled}
\DontPrintSemicolon
\SetNoFillComment
\SetKwInOut{Input}{Input}\SetKwInOut{Output}{Output}
%\setstretch{1.05}
\caption{Estimating Stationary Variance $v(f)$}\label{alg:StationaryVariance}
    %\vspace{0.4em}
    \BlankLine

    \KwIn{Time horizon $n > 0$, constant $c > 0$, and step-size sequence $\lrset{\alpha_k}$.}
    \BlankLine

    {\bf Initialization: } $\bar{f}_0  = 0$, and $v_0 =0$.\\
    \BlankLine

\For{$k\leftarrow 1$ \KwTo $n$}{
\BlankLine

Observe $(X_k, f(X_k))$.\;

\BlankLine

\tcp{\textbf{Average reward $\bar{f}$ estimation}} 
        \vspace{-1.5em}
        \begin{align*}
            \bar{f}_{k+1} = (1-\alpha_k)\bar{f}_k + \alpha_k f(X_k).
        \end{align*}
\;
        \vspace{-1.5em}

\tcp{\textbf{Stationary Variance $v(f)$ estimation}}
\vspace{-1.5em}
    \begin{align*}
        v_{k+1} = (1-c\alpha_k) v_k + c\alpha_k (f^2(X_k) - f(X_k) \bar{f}_k)
    \end{align*}
\;
\vspace{-1.5em}
    }
 \Output{Estimates $v_n$, $\bar{f}_n$}     
\end{algorithm2e}

\begin{theorem}\label{th:finitebound_statvar}
For $\gamma > 1$, consider Algorithm~\ref{alg:StationaryVariance} with $c$ satisfying:
    \[ c  \bar{f}^2 \le 2 \lrp{  -(\gamma-1) + \sqrt{(\gamma-1)\lrp{\gamma-1 + \gamma {\bar{f}}^2}} }. \]
    Let $(\Theta^S)^T := [ \bar{f}~~v(f)]$, and let $\eta = \sqrt{1 + c^2\lrp{1+\bar{f}^2}}$. For a constant $B > 1$, let  $\xi_1 :=  3\lrp{1+\|\Theta^S\|_2}^2$, $\xi_2 := 112B(1+\|\Theta^S\|_2)^2$, and let $\Theta^S_n := [ \bar{f}_n~ v_n]$ denote the vector of estimates at step $n$.
    \begin{enumerate}%[label=(\alph*)]
        \item[(a)] Let $\alpha_i = \alpha$ for all $i$, such that     
        \[ \alpha < \min\lrset{ \frac{2}{\gamma}, \frac{1}{28B\lrp{1+ \frac{\eta^2}{\gamma} }}}. \]
        Then, for all $n\ge 1$, 
        \[ \E{\|\Theta^S_n - \Theta^S\|^2_2} \le \xi_1\lrp{1-\frac{\gamma\alpha}{2}}^{n} +  \frac{\xi_2 \alpha\eta^2}{\gamma} + \xi_2\alpha. \]

        \item[(b)]  Let $\alpha_i = \frac{\alpha}{i+h}$ for all $i \ge 1$, with $\alpha$ and $h$ chosen so that
        \[ h \ge \max\lrset{1+\frac{\alpha\gamma}{2}, 1+28B\lrp{ \frac{\eta^2\alpha}{\gamma} + \alpha + \frac{1}{\gamma} }},\quad\text{and}\quad \alpha > \frac{2}{\gamma}.\]
        Then, for all $n\ge 0$, 
        \begin{align*}
        \E{\|\Theta^S_n- \Theta^S \|^2_2}  &\le \xi_1\lrp{\frac{h}{n+h}}^\frac{\alpha\gamma}{2} + \frac{5\xi_2e^2\eta^2(1+\gamma)\alpha^2}{(n+h)(\alpha\gamma
        -2)} + \frac{\xi_2 \alpha}{n+h}.
        \end{align*} 
    \end{enumerate}
\end{theorem}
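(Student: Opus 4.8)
The plan is to read Theorem~\ref{th:finitebound_statvar} as the two-dimensional specialization of the same linear stochastic approximation machinery (Theorem~\ref{th:finiteboundlfa}) that yields Theorem~\ref{th:finitebound}, now with no value-function block and hence a drastically smaller system. First I would collapse the two updates of Algorithm~\ref{alg:StationaryVariance} into a single recursion $\Theta^S_{k+1} = \Theta^S_k + \alpha_k\lrp{A(X_k)\Theta^S_k + b(X_k)}$ for $\Theta^S_k = (\bar f_k, v_k)^T$, with
\[ A(X_k) = \begin{pmatrix} -1 & 0 \\ -c\,f(X_k) & -c \end{pmatrix}, \qquad b(X_k) = \begin{pmatrix} f(X_k) \\ c\,f^2(X_k) \end{pmatrix}. \]
Setting $\bar A = \Exp{\pi}{A(X)}$ and $\bar b = \Exp{\pi}{b(X)}$ and substituting $\Theta^S = (\bar f, v(f))^T$, a direct calculation using $\bar f^2 = \Exp{\pi}{f(X)\bar f}$ and $v(f) = \Exp{\pi}{f^2(X)} - \bar f^2$ shows $\bar A\,\Theta^S + \bar b = 0$; since $\bar A$ is invertible this is the unique root, so the recursion targets exactly the desired pair.

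Next I would check the hypotheses of the general finite-sample bound with explicit constants. The matrix $\bar A$ is lower triangular with eigenvalues $-1$ and $-c$, hence Hurwitz for every $c>0$, giving qualitative stability for free. The quantitative input is the symmetric part
\[ \tfrac12\lrp{\bar A + \bar A^T} = \begin{pmatrix} -1 & -c\bar f/2 \\ -c\bar f/2 & -c \end{pmatrix}, \]
and I would compute its two eigenvalues in closed form and impose the definiteness bound with constant $\gamma$ demanded by the general theorem; this collapses to a quadratic inequality in $c$ whose admissible set is exactly $c\bar f^2 \le 2\lrp{-(\gamma-1) + \sqrt{(\gamma-1)(\gamma-1+\gamma\bar f^2)}}$, i.e.\ the stated condition (it controls how negative this symmetric part may become and keeps it in the regime where the Euclidean-norm Lyapunov analysis applies). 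I would also identify the magnitude constant $\eta = \sqrt{1+c^2(1+\bar f^2)} = \|\bar A\|_F$ and record the uniform bounds on $\|A(X_k)\|$ and $\|b(X_k)\|$ implied by $|f|\le 1$; these produce $\xi_1,\xi_2$ and the residual $O(\alpha)$ terms.

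The remaining work is to control the Markov noise $A(X_k)-\bar A$ and $b(X_k)-\bar b$, which here depend only on the single state $X_k$. I would exploit the geometric mixing of $\calM$ together with the Poisson-equation decomposition flagged in the introduction---rewriting the centered noise as a martingale difference plus a telescoping remainder---to absorb the temporal correlations without paying the extra $\log n$ factor, exactly as in the proof of the main theorem. Feeding $\gamma$ and $\eta$ into the general bound then gives part (a) for constant $\alpha < 2/\gamma$ (the geometric factor $1-\gamma\alpha/2$ together with the two $O(\alpha)$ residuals) and part (b) for $\alpha_i = \alpha/(i+h)$ with $\alpha > 2/\gamma$, where the condition $\alpha\gamma/2 > 1$ makes the transient $\lrp{h/(n+h)}^{\alpha\gamma/2}$ term decay faster than $1/n$ and leaves the $O(1/n)$ terms dominant.

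I expect the main obstacle to be the quantitative definiteness step: matching the closed-form eigenvalues of the $2\times2$ symmetric part to the constant $\gamma$ of the general theorem, verifying that the resulting $c$-interval is nonempty for each $\gamma>1$, and confirming it coincides with the displayed inequality. The second delicate point is the Markov-noise telescoping that removes the $\log n$ factor; I expect it to transfer almost verbatim from the asymptotic-variance argument, and in fact to be cleaner here because the noise depends only on $X_k$ rather than on the pair $(X_k,X_{k+1})$.
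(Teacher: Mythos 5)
Your proposal is correct and follows essentially the same route as the paper: Appendix~\ref{app:stationary_var} likewise casts Algorithm~\ref{alg:StationaryVariance} as the linear SA $\Theta^S_{k+1}=\Theta^S_k+\alpha_k(A^S_k\Theta^S_k+b^S_k)$ with exactly your matrices, establishes the drift condition via Lemma~\ref{lem:negdef_statvar} (whose minimization of the quadratic form $-\Theta^T A^S\Theta$ over the unit circle is precisely your smallest-eigenvalue computation for the symmetric part, yielding the same quadratic inequality in $c$), and then invokes the general Poisson-equation-based bound of Theorem~\ref{th:appfiniteboundlfa} with $\mathbb{S}=\R^2$ and $\gamma_2=\gamma$. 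The only cosmetic difference is that you phrase the contraction step through eigenvalues of $\tfrac12(\bar A+\bar A^T)$ while the paper reduces the constrained minimization to a scalar problem via Lemma~\ref{lem:aux1}; these are the same calculation.
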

We refer the reader to Appendix~\ref{app:stationary_var} for a complete proof of the Theorem above. It follows a 2-step approach. In the first step, we view Algorithm~\ref{alg:StationaryVariance} as a linear SA update, and establish a contraction property for the average of the update matrices (Lemma~\ref{lem:negdef_statvar} in Appendix~\ref{app:step1_statvar}). In the second step, we use a general bound on MSE for iterates of a linear SA satisfying the proven contraction property, that we prove in Theorem~\ref{th:appfiniteboundlfa}. In fact, we use a similar 2-step approach to prove other results in this work. We describe this approach in details in Section~\ref{sec:proofsketch}, where we prove a MSE bound for estimating $\kappa(f)$ in a more general setting, presented in the following section.

\begin{remark}
    In Step 2 of the proof of Theorem~\ref{th:finitebound_statvar}, we could use an existing finite sample bound for MSE of a linear SA update in presence of Markov noise \cite{srikant2019finite}. This would result in $O(\frac{\log n}{n})$ MSE bound in Theorem~\ref{th:finitebound_statvar}$~(b)$.  However our tighter bound in Theorem~\ref{th:appfiniteboundlfa}, that uses Poisson equation for handling the bias introduced due to presence of Markov dependence, results in $O(\frac{1}{n})$ MSE in Theorem~\ref{th:finitebound_statvar}$~(b)$ above.
\end{remark}

\begin{remark} 
Notice that there is no dependence on the size of the state space $S$  in the bound of Theorem~\ref{th:finitebound_statvar}.
\end{remark}

\paragraph*{I.i.d. Variance Estimation} The corollary below shows that Algorithm~\ref{alg:StationaryVariance} can also be used to estimate the variance in i.i.d. setting, achieving the optimal rate of convergence of $O(\frac{1}{n})$ in MSE. Moreover, the estimator satisfies all the desirable properties discussed in Section~\ref{sec:intro} -- it is recursive, does not require storing historical samples, and does not require knowing the total run length apriori.
\begin{corollary}          
    For $n\in\N$, consider a sequence of i.i.d. observations $\lrset{X_i}_{i=1}^n$ from an unknown distribution $P$. Let $\bar{f}:= \Exp{P}{f(X)}$.  Define 
    \[v(f) := \Exp{P}{\lrp{f(X) - \Exp{P}{f(X)}}^2} = \Exp{P}{f^2(X) - f(X)\bar{f}}.\] 
    Then, the output $v_n$ of Algorithm~\ref{alg:StationaryVariance} satisfies 
    \[ \E{\lrp{v_n - v(f)}^2} = O\lrp{\frac{1}{n}}. \]
\end{corollary}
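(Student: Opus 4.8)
The plan is to recognize the i.i.d. setting as a degenerate special case of the stationary-variance problem already treated in Theorem~\ref{th:finitebound_statvar}, and then invoke that theorem. Concretely, a sequence of i.i.d. draws $\lrset{X_i}$ from $P$ is exactly a trajectory of a Markov chain whose transition kernel has every row equal to $P$: at each step a fresh, independent sample is drawn regardless of the current state. This chain is trivially irreducible and aperiodic on the support of $P$, and its unique stationary distribution is $\pi = P$. Under this identification the stationary mean $\Exp{\pi}{f(X)}$ equals $\bar{f}$ and the stationary variance $\Exp{\pi}{(f(X)-\bar{f})^2}$ equals $v(f) = \Exp{P}{f^2(X) - f(X)\bar{f}}$, so the recursions run by Algorithm~\ref{alg:StationaryVariance} are precisely those analyzed in Theorem~\ref{th:finitebound_statvar}. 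Choosing a diminishing step size $\alpha_i = \alpha/(i+h)$ with $\gamma, c, \alpha, h$ satisfying the hypotheses of part $(b)$, that theorem yields $\E{(v_n - v(f))^2} \le \E{\|\Theta^S_n - \Theta^S\|^2_2} = O(1/n)$, which is the claim.

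The one point needing care is that Theorem~\ref{th:finitebound_statvar} is stated for a finite state space, whereas $P$ here may be continuous or countably infinite. Two observations remove this gap. First, as noted in the remark following Theorem~\ref{th:finitebound_statvar}, the resulting bound carries \emph{no} dependence on the state-space size $S$; the constants $\xi_1, \xi_2, \eta$ depend only on $\|\Theta^S\|_2$ (hence on $\bar{f}$ and $v(f)$, both bounded since $|f|\le 1$) and on the step-size constants. Second, the only structural inputs to the proof are the boundedness of $f$ and geometric mixing of the chain, and for i.i.d. data the mixing is immediate---the chain reaches stationarity in a single step, so every mixing-dependent quantity is uniformly $O(1)$. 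Consequently the two-step argument behind Theorem~\ref{th:finitebound_statvar} (the contraction of the averaged drift matrix, Lemma~\ref{lem:negdef_statvar}, followed by the generic linear-SA MSE bound of Theorem~\ref{th:appfiniteboundlfa}) applies verbatim, with no appeal to finiteness of $\calS$.

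Alternatively, one may redo the two-step argument directly in the i.i.d. model, which is in fact simpler than the Markov case. The Poisson-equation device used to tame the Markov bias degenerates completely: since $PW$ is constant for the i.i.d. kernel, the Poisson solution for any centered observable is the observable itself, so the bias term that forces the extra $\log n$ factor in generic Markov-noise analyses vanishes, and the optimal $O(1/n)$ rate is recovered without it. In this direct route the substantive step is checking that the joint $2\times 2$ averaged drift matrix of the coupled iterate $(\bar{f}_k, v_k)$ is Hurwitz under the stated condition on $c$---the coupling arises because the $v$-update uses the running estimate $\bar{f}_k$ rather than $\bar{f}$---together with a bounded-second-moment bound on the i.i.d.\ noise; both follow at once from $|f|\le 1$. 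I expect the only genuine obstacle to be the finite-state-space technicality addressed above; once the $S$-independence of the bound is invoked, the remainder is a direct specialization of machinery already established.
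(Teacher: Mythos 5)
Your proposal is correct and matches the paper's (implicit) argument: the corollary is obtained by viewing the i.i.d.\ sequence as a Markov chain whose transition kernel has identical rows equal to $P$, so that Theorem~\ref{th:finitebound_statvar}$(b)$ applies directly and yields the $O(\frac{1}{n})$ MSE bound. Your additional observations --- that the bound carries no dependence on the state-space size and that the Poisson-equation correction degenerates under the i.i.d.\ kernel --- are accurate and, if anything, supply a justification for the infinite-support case that the paper leaves unstated.
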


The rate of convergence of $\Omega(\frac{1}{n})$ for MSE is tight for i.i.d. setting. In the special setting of i.i.d. noise with parametric distributions, this follows from the Cram\'er-Rao lower bound \cite{nielsen2013cramer}. 

\section{Variance Estimation: Linear Function Approximation}\label{sec:lfa}

When the underlying state and action spaces are large, estimating $V$ function for each state-action pair may be computationally intractable. To address this, we consider evaluating an approximation of $V$ by its projection on a linear subspace spanned by a given fixed set of $d$ vectors $\{\tilde{\phi}_1, \dots, \tilde{\phi}_d\}$,
where $\tilde{\phi}_i \in \R^{S}$ for each $i\in (d)$. More concretely, for $s \in \calS$, and $\theta\in \R^d$, we consider the following linear function approximation (LFA) of $V(s)$: $$V_\theta(s) := \phi(s)^T\theta, \quad \text{where} \quad \phi^T(s) := [\tilde{\phi}_1(s)  \dots \tilde{\phi}_d(s)],$$ 
$\phi^T(s)$ is the feature vector for state $s$, and $\phi(s)\in\R^d$. Recall that $S = |\calS|$. Let $\Phi$ be a $S\times d$ matrix with $\tilde{\phi}_i$ being the $i^{\text{th}}$ column, and let $$W_\Phi = \{\Phi\theta : \theta \in \R^d\}$$ denote the column space of $\Phi$. Then, $V_\theta = \Phi \theta$, where $V_\theta \in \R^{S\times 1}$ is an approximation for $V$ using $\theta$.

\begin{assumption}\label{asmp:2}
    {The matrix $\Phi$ is full rank, i.e., the set of feature vectors $\{\tilde{\phi}_1, \dots, \tilde{\phi}_d\}$ are linearly independent. Additionally, $\|\phi(s)\|_2 \le 1$, for each $s\in\calS$.}
\end{assumption}

This is a standard assumption, and can be achieved by feature normalization, see \cite{tsitsiklis1999average,BertsekasT96}. We now introduce some notation that will be used in this section. Let $\theta_e\in\R^d$ be the unique vector (if it exists) such that $\Phi\theta_e = {\bf e}$. Define subspace $S_{\Phi, e}$ of $\R^d$ as 
\begin{align*}
S_{\Phi,e} := \text{span}\lrp{ \lrset{\theta | \Phi \theta = {\bf e}}} = 
    \begin{cases}
        \lrset{c\theta_e | c\in \R}, \quad & \text{ if }{\bf e} \in W_\Phi,\\
        \{0\}, \quad & \text{ otherwise. }
    \end{cases}
    \end{align*}
Let $E$ be the subspace of $\R^d$ that is orthogonal complement (in $\ell_2$-norm) of $S_{\Phi, e}$, i.e., 
\begin{align*}
    E = \begin{cases}
        \{ \theta \in \R^{d} : ~ \theta^T  \theta_e = 0 \}, \quad & \text{ if } {\bf e}\in W_\Phi,\\
        \R^d, \quad &\text{ otherwise.}
    \end{cases}
\end{align*}
Observe that for $\theta \in E$, $\Phi \theta$ is a non-constant vector. This is because there does not exist $\theta\in E$ such that $\Phi\theta = {\bf e}$. This follows since if ${\bf e}\in W_\Phi$, then the unique vector $\theta_e$ such that $\Phi \theta_e = {\bf e}$, does not belong to $E$. On the other hand, if ${\bf e}\notin W_\Phi$, then there does not exist a vector $\theta\in\R^d$, hence in $E$, such that $\Phi\theta = {\bf e}$. 

Finally, let $\Pi_{2,E}$ denote the orthogonal projection of vectors in $\R^d$ (in $\ell_2$-norm) on the subspace $E$, i.e.,
\[ \Pi_{2,E} := \begin{cases} 
    I - \theta_e (\theta^T_e \theta_e)^{-1} \theta^T_e, \quad & \text{ if } {\bf e} \in W_\Phi,\\
    I, \quad & \text{ otherwise.}
    \end{cases}
\]

\subsection{The Algorithm}\label{sec:alglfa}
We now propose an algorithm for estimating the asymptotic variance with LFA that, as in the previous section, uses the formulation of $\kappa$ in~\eqref{lem:var_form_twotraj}, with $V$ replaced by its approximation. In the setting considered here, the estimation of $V$ corresponds to an estimate for $\theta$ at each step; call it $\theta_k$. The corresponding estimate for $V$ is then $\Phi\theta_k$.

The algorithm in this setting is a modification of Algorithm~\ref{alg:PE}, with ~\eqref{eq:deltaUpdate},~\eqref{eq:QUpdate},~\eqref{eq:barVUpdate}, and~\eqref{eq:kappaUpdate} replaced by~\eqref{eq:lfadeltaUpdate},~\eqref{eq:lfaThetaUpdate},~\eqref{eq:lfabarVUpdate}, and~\eqref{eq:lfaT1Update} below, respectively. 
\begin{equation}
    \delta_k :=  f(X_k) - \bar{f}_{k} + (\phi(X_{k+1}) - \phi(X_k))^T\theta_k,\label{eq:lfadeltaUpdate}
\end{equation}
\begin{equation}
    \theta_{k+1} = \theta_k + \alpha_k \Pi_{2,E}\phi(X_k) \delta_k,\label{eq:lfaThetaUpdate}
\end{equation}
\begin{equation}
    \widetilde{V}_{k+1} = (1-c_2\alpha_k)\widetilde{V}_k + c_2 \alpha_k \phi^T(X_k)\theta_k \numberthis\label{eq:lfabarVUpdate}
\end{equation}
\begin{equation}
    \kappa_{k+1} = (1-c_3\alpha_k)\kappa_k + c_3 \alpha_k \lrp{ 2f(X_k) \phi^T(X_k)\theta_k - 2 f(X_k) \widetilde{V}_k - f^2(X_k) + f(X_k)\bar{f}_k }      \label{eq:lfaT1Update}
\end{equation}
Here, updates in~\eqref{eq:lfadeltaUpdate} and~\eqref{eq:lfaThetaUpdate} correspond to SA for estimating $\theta$, adjusted with the projection on subspace $E$. As in Section~\ref{sec:varest:tabular}, this projection step is to ensure convergence of the iterates $\widetilde{V}_k$, and is not required otherwise (Remark~\ref{rem:proj}). Note that Algorithm~\ref{alg:PE} for the tabular setting is a special case with $d = S$ and $\Phi = I$, the identity matrix in $S\times S$ dimensions.

\iffalse
\begin{algorithm2e}
\RestyleAlgo{ruled}
\DontPrintSemicolon 
%\setstretch{1.4}
\caption{Policy Evaluation for Variance with Linear Function Approximation}\label{alg:PELFA}

    \KwIn{Time horizon $T > 0$, constants $c_1 > 0$, $c_2 > 0$, and step-size sequence $\lrset{\alpha_k}$.}
    
    {\bf Initialization: } $J_0  = 0$, $\theta_0 = \vec{\bf  0} $, and $\kappa_0 =0$.\\
    
    \While{$k \le T$}{
        \begin{enumerate}
        \setlength\itemsep{0em}
        \item  On first trajectory, take actions $A_k \sim \mu(\cdot| S_k)$ \\
        and $A_{k+1}\sim \mu(\cdot| S_{k+1})$ to observe ($S_k, A_k, r(S_k, A_k), S_{k+1}, A_{k+1}$). \\
        
        \item On second trajectory, take action $A'_k \sim \mu(\cdot| S'_k)$ \\
        and observe ($S'_k, A'_k, r(S'_k, A'_k)$).\\

        \item Update estimate for $J$: 
        
        \(J_{k+1} = J_k + c_1\alpha_k ( -J_k + r(S_k, A_k) ). \)
            
        \item Update estimate for $\theta$:
        
        $\delta_k :=  r(S_k, A_k) - J_{k} + (\phi_{k+1}- \phi_k)^T\theta_k,$
        
        $\theta_{k+1} = \theta_k + \alpha_k \Pi_{2,E}\phi_k \delta_k.$ 

        \item Update estimate for $\kappa$:

        $T_1  := 2(r(S_k,A_k)-r(S'_k,A'_k))\phi^T_k\theta_{k},$
        
        $T_2 := \frac{1}{2}\lrp{r(S_k,A_k) - r(S'_k,A'_k)}^2,$

        $\kappa_{k+1} =  \kappa_{k} + c_2\alpha_k\lrp{T_1 - T_2 - \kappa_{k}} .$

        \item Increment $k$ to $k+1$.
        \end{enumerate}
    }
\end{algorithm2e}
\fi 

\subsection{Convergence Rates}\label{sec:mainresultslfa}
In this section, we present a finite-sample bound on the estimation error of the proposed algorithm with LFA. Recall the Markov chain $\mathcal M_2$ introduced in Section~\ref{sec:theorype}. Define 
\[ \Delta_2 := \min\limits_{\|\theta\|_2 = 1, \theta \in E} ~ \theta^T \Phi^T D_\pi(I-P) \Phi \theta. \]
As earlier, this can be shown to be strictly positive since for $\theta\in E$, $\Phi \theta$ is a non-constant vector. Finally, let $\Theta^{*T} := [\bar{f}~ \theta^{*T} ~ 
\widetilde{V}~ \kappa^* ]$, 
where 
\begin{align*}
    \kappa^* = \Exp{\pi}{2f(X)[\Phi\theta^*](X) - 2f(X)\widetilde{V} - f^2(X) + f(X)\bar{f}}, \numberthis\label{eq:kappastar}
\end{align*}
where $\Phi\theta^* \in \R^S$, and $[\Phi\theta^*](i)$ represents its $i^{th}$ component, $\widetilde{V} = \Exp{\pi}{[\Phi\theta^*](X)}$,
and $\theta^*$ is the unique vector in $E$ that is also a solution for \[ \Phi\theta = \Pi_{D_\pi, W_\Phi} T\Phi\theta,  \numberthis\label{eq:projBellman}\]
where
$\Pi_{D_\pi, W_\Phi}$ is the projection matrix onto $W_\Phi:= \lrset{\Phi\theta | \theta\in\R^d}$ with respect to $D_\pi$ norm, and $D_\pi$ is defined above~\eqref{eq:eta}. Specifically, 
$$\Pi_{D_\pi, W_\Phi} = \Phi(\Phi^TD_\pi\Phi)^{-1}\Phi^T D_\pi.$$ 
Further, $T$ is an operator that for a vector $V\in \R^{S}$, satisfies $T V = \mathcal F - \bar{f} {\bf e} + P V.$ 

Observe that $\kappa^*$ differs from $\kappa(f)$ in that $V$ and $\bar{V}$ in formulation~\eqref{lem:var_form_twotraj} are replaced by the estimate in the subspace spanned by $\Phi$ and $\widetilde{V}$, respectively. The following theorem bounds the mean-squared distance between the estimate at time $n$ and the limit point $\kappa^*$ under constant as well as diminishing step sizes. 

\begin{theorem}\label{th:finiteboundlfa}
    Consider the algorithm with $c_1$, $c_2$, and $c_3$ satisfying:
    \[c_1\ge \frac{1}{2\Delta_2} + \frac{\Delta_2}{2}, \qquad  \frac{5}{249}(5-2\sqrt{2})\Delta_2 \le c_3\le \frac{5}{249} (5+2\sqrt{2})\Delta_2, \]
    and
    \[ c_3 -\frac{498 c^2_3}{7\Delta_2} + \frac{7 \Delta_2}{498}   \le c_2 \le -3 c_3 + \frac{5 }{83} \sqrt{498 c_3 \Delta_2 - 17 \Delta^2_2} . \]
    Recall that $\eta$ was defined in~\eqref{eq:eta}, and $\Theta^*$ before~\eqref{eq:kappastar}. For a constant $B > 1$, let  $\xi_1 :=  3\lrp{1+\|\Theta^*\|_2}^2$, $\xi_2 := 112B(1+\|\Theta^*\|_2)^2$, and let $\Theta_n := [ \bar{f}_n~ \theta^T_n~\widetilde{V}_n~\kappa_n]$ denote the vector of estimates at step $n$.
    \begin{enumerate}%[label=(\alph*)]
        \item[(a)] Let $\alpha_i = \alpha$ for all $i$, such that     
        \[ \alpha < \min\lrset{ \frac{40}{\Delta_2}, \frac{1}{28B\lrp{1+ \frac{20\eta^2}{\Delta_2} }}}. \]
        Then, for all $n\ge 1$, 
        \[ \E{\|\Theta_n - \Theta^*\|^2_2} \le \xi_1\lrp{1-\frac{\Delta_2\alpha}{40}}^{n} +  \frac{20\xi_2 \alpha\eta^2}{\Delta_2} + \xi_2\alpha. \]

        \item[(b)]  Let $\alpha_i = \frac{\alpha}{i+h}$ for all $i \ge 0$, with $\alpha$ and $h$ chosen so that
        \[ h \ge \max\lrset{1+\frac{\alpha\Delta_2}{40}, 1+28B\lrp{ \frac{20\eta^2\alpha}{\Delta_2} + \alpha + \frac{20}{\Delta_2} }},\quad\text{and}\quad \alpha > \frac{40}{\Delta_2}.\]
        Then, for all $n\ge 0$, 
        \begin{align*}
        \E{\|\Theta_n- \Theta^*\|^2_2}  &\le \xi_1\lrp{\frac{h}{n+h}}^\frac{\alpha\Delta_2}{40} + \frac{5\xi_2e^2\eta^2(20+\Delta_2)\alpha^2}{(n+h)(\alpha\Delta_2 -40)} + \frac{\xi_2 \alpha}{n+h}.
        \end{align*} 
    \end{enumerate}
\end{theorem}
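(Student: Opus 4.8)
The plan is to recast the four coupled recursions~\eqref{eq:lfadeltaUpdate}--\eqref{eq:lfaT1Update}, together with the $\bar f$ update, as a single linear stochastic approximation iteration and then invoke the generic finite-sample bound of Theorem~\ref{th:appfiniteboundlfa}, following the same two-step template already used for Theorem~\ref{th:finitebound_statvar}. Writing $\Theta_k := [\bar f_k~\theta_k^T~\widetilde V_k~\kappa_k]^T \in \R^{d+3}$ and $Y_k = (X_k,X_{k+1})$, every update is affine in $\Theta_k$ with coefficients that are deterministic functions of $Y_k$, so the system takes the form $\Theta_{k+1} = \Theta_k + \alpha_k(A(Y_k)\Theta_k + b(Y_k))$. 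First I would compute the stationary averages $\bar A := \Exp{\pi}{A(Y)}$ and $\bar b := \Exp{\pi}{b(Y)}$. The key structural observation is that $\bar A$ is \emph{lower block-triangular} in the ordering $(\bar f,\theta,\widetilde V,\kappa)$: the $\bar f$-block is the scalar $-c_1$; the $\theta$-block equals $-\Pi_{2,E}\,\Phi^T D_\pi(I-P)\Phi$ (using $\Exp{\pi}{\phi(X_k)(\phi(X_{k+1})-\phi(X_k))^T} = -\Phi^T D_\pi(I-P)\Phi$ and the fact that the projection $\Pi_{2,E}$ confines the $\theta$-dynamics to $E$ when $\theta_0\in E$); the $\widetilde V$-block is $-c_2$; and the $\kappa$-block is $-c_3$. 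I would then verify that the stated $\Theta^*$ is exactly the fixed point $\bar A\Theta^* + \bar b = {\bf 0}$: the $\theta$-row reproduces the projected Bellman equation~\eqref{eq:projBellman} (whose unique solution in $E$ is $\theta^*$), the $\widetilde V$-row gives $\widetilde V = \Exp{\pi}{\phi^T(X)\theta^*}$, and the $\kappa$-row reproduces~\eqref{eq:kappastar}.

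Step~1 (contraction) is to show that $\bar A$ is Hurwitz with an explicit, uniform rate, i.e.\ to establish $\bar A + \bar A^T \preceq -c_0\Delta_2\, I$ on $\R^{d+3}$ (with the $\theta$-coordinates understood to lie in $E$) for an explicit constant $c_0>0$, which is the analogue of Lemma~\ref{lem:negdef_statvar}. The diagonal blocks are individually negative definite: the scalars $-c_1,-c_2,-c_3$ trivially, and the $\theta$-block because $\theta^T\Phi^T D_\pi(I-P)\Phi\,\theta \ge \Delta_2\|\theta\|_2^2$ for $\theta\in E$ by the definition of $\Delta_2$ and \cite[Lemma 7]{tsitsiklis1997average}. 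The work is in dominating the off-diagonal coupling blocks (the $\bar f\!\to\!\theta$, $\bar f\!\to\!\kappa$, $\theta\!\to\!\widetilde V$, $\theta\!\to\!\kappa$, and $\widetilde V\!\to\!\kappa$ entries), whose magnitudes I would bound using $f_{\max}\le 1$ and $\|\phi(s)\|_2\le 1$ from Assumption~\ref{asmp:2}. Completing the square across the blocks reduces negative-definiteness of $\bar A+\bar A^T$ to a small set of scalar/Schur-complement inequalities in $c_1,c_2,c_3,\Delta_2$; the stated hypotheses (the lower bound on $c_1$, the two-sided bracket for $c_3$, and the quadratic bracket for $c_2$ whose square root $\sqrt{498c_3\Delta_2-17\Delta_2^2}$ is precisely the discriminant of the governing quadratic) carve out exactly the feasibility region of these inequalities. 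This is the step I expect to be the main obstacle: the coupling must be controlled simultaneously across all four blocks rather than block by block, and keeping the constants tight enough to land the $\Delta_2\alpha/40$ rate requires a careful choice of how each cross term is split between the blocks it connects.

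Step~2 (generic bound) is then largely mechanical. With the contraction of Step~1 in hand, I would feed $\bar A$, the norm bounds on $\|A(Y_k)\|$ and $\|b(Y_k)\|$ (which, via Assumption~\ref{asmp:2} and $f_{\max}\le 1$, are captured by the constant $\eta$ of~\eqref{eq:eta}), and the geometric mixing of $\mathcal M_2$ into the generic linear-SA mean-squared-error bound of Theorem~\ref{th:appfiniteboundlfa}. For constant $\alpha$ this yields the geometrically decaying transient $\xi_1(1-\Delta_2\alpha/40)^n$ plus the $O(\alpha)$ noise floor of part~(a); for $\alpha_k=\alpha/(k+h)$ with $\alpha\Delta_2>40$ it yields the $O(1/n)$ bound of part~(b), the transient $(h/(n+h))^{\alpha\Delta_2/40}$ decaying faster than $1/n$ exactly because $\alpha\Delta_2>40$. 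The one genuinely delicate ingredient inside Theorem~\ref{th:appfiniteboundlfa} --- and the reason the rate is $O(1/n)$ rather than $O(\log n/n)$ --- is the treatment of the Markov noise $A(Y_k)-\bar A$ and $b(Y_k)-\bar b$: rather than crudely bounding the correlation between the centered noise and the iterate, I would introduce the solution of a Poisson equation for these centered terms (as in the tabular analysis) to telescope the bias contribution and shave off the multiplicative $\log n$ factor. Finally, substituting the bound on $\|\Theta^*\|_2$ into the definitions of $\xi_1$ and $\xi_2$ and collecting terms produces the stated inequalities. Theorem~\ref{th:finitebound} is then recovered as the special case $d=S$, $\Phi=I$, with $\Pi_{2,E}$ the projection orthogonal to ${\bf 1}$ and $\Delta_2=\Delta_1$.
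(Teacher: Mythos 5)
Your proposal follows essentially the same two-step route as the paper: recast the coupled updates as a single linear SA iteration $\Theta_{k+1}=\Theta_k+\alpha_k(A(Y_k)\Theta_k+b(Y_k))$, verify $\Theta^*$ is the unique fixed point in $\R\times E\times\R\times\R$, prove the semi-norm contraction of the averaged matrix on that subspace (the paper's Lemma~\ref{th:negdeflfa}, with constant $\Delta_2/20$, obtained by nested minimizations rather than your Schur-complement phrasing, but the same cross-term bookkeeping), and then invoke the generic Markov-noise MSE bound of Theorem~\ref{th:appfiniteboundlfa} whose Poisson-equation treatment of the noise removes the $\log n$ factor. The identification of the block lower-triangular structure, the fixed-point verification, the role of the $c_1,c_2,c_3$ constraints, and the reduction of Theorem~\ref{th:finitebound} to the case $\Phi=I$ all match the paper's argument.
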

The above theorem bounds the MSE between the iterates of the algorithm, $\Theta_n$, and its limit point $\Theta^*$. In particular, since $ \E{(\kappa_n - \kappa^*)^2}  \le \E{\|\Theta_n - \Theta^*\|^2_2}$, the same bounds also hold for the MSE for $\kappa$ estimation. To prove Theorem~\ref{th:finiteboundlfa}, we view the proposed algorithm as a linear SA update, and prove appropriate contraction properties for the associated matrices (Lemma~\ref{th:negdeflfa}). We refer the reader to Section~\ref{sec:proofsketch} for a proof of the theorem.

\subsection{Approximation Error} Since we approximate the elements of the set $S_f$ of $V$ functions (defined after~\eqref{eq:PE}) by a linear combination of basis vectors, we incur an approximation error in estimating $\kappa(f)$ using~\eqref{lem:var_form_twotraj}. In particular, $\kappa^*$ may differ from $\kappa(f)$ depending on the chosen approximation architecture. 

In this section, we bound the squared approximation error: $(\kappa(f)-\kappa^*)^2$. To this end, we first define the approximation error associated with approximating $V$ using LFA. Since each element of $S_f$ is a valid $V$ function, for any $\theta\in\R^d$, we define the error of approximating a point in $S_f$ by $\Phi\theta$ as the minimum weighted distance (in $D_\pi$-norm) of $\Phi\theta$ from $S_f$ \cite{tsitsiklis1999average}, i.e., 
 $\inf\nolimits_{V\in S_f } \|\Phi\theta - V\|_{D_\pi}$.
The minimum possible approximation error in $V$, due to the chosen architecture, is given by 
\[ \mathcal E := \inf\limits_{\theta\in\R^d} \inf\limits_{V\in S_f} \|\Phi\theta -V\|_{D_\pi}. \]
This essentially captures the distance between the two sets, $S_f$ and the column space of $\Phi$. The following proposition bounds the squared difference between $\kappa^*$ and the asymptotic variance $\kappa(f)$, in terms of the approximation error for $V$.

\begin{proposition}\label{prop:approxerror} Given $d$ basis vectors represented as columns of $\Phi$, there exists $\lambda \in (0,1)$ such that 
    \[ (\kappa^* -\kappa(f))^2 \le \frac{16~{ \mathcal E }^2}{1-\lambda^2}. \numberthis\label{eq:approxerr} \]
\end{proposition}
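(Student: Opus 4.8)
The plan is to write $\kappa^* - \kappa(f)$ as a difference of two applications of formula \eqref{lem:var_form_twotraj}, one using the true value function $V \in S_f$ and one using the approximation $\Phi\theta^*$, and then control the resulting expression by the $D_\pi$-weighted distance between $\Phi\theta^*$ and $S_f$. Concretely, recall that $\kappa(f)$ is given by \eqref{lem:var_form_twotraj} with a solution $V$ of the Poisson equation (the value is independent of which $V\in S_f$ we pick), while $\kappa^*$ in \eqref{eq:kappastar} has exactly the same algebraic form but with $V$ replaced by $\Phi\theta^*$ and $\bar V$ replaced by $\widetilde V = \Exp{\pi}{[\Phi\theta^*](X)}$. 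Subtracting, the $f^2(X)$ and $f(X)\bar f$ terms cancel, and we are left with
\begin{align*}
\kappa^* - \kappa(f) = 2\,\Exp{\pi}{ f(X)\big( [\Phi\theta^*](X) - V(X) \big) } - 2\,\Exp{\pi}{ f(X) }\big( \widetilde V - \bar V\big).
\end{align*}
Writing $g := \Phi\theta^* - V$ for the (state-indexed) error vector, and using $\widetilde V - \bar V = \Exp{\pi}{g(X)}$, this becomes $2\,\Exp{\pi}{(f(X)-\bar f)\,g(X)}$, i.e. a $\pi$-weighted inner product of the centered reward with the value-function error $g$.

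The next step is to bound this inner product. By Cauchy--Schwarz in the $D_\pi$ inner product, $\big|\Exp{\pi}{(f(X)-\bar f)g(X)}\big| \le \|f-\bar f{\bf 1}\|_{D_\pi}\,\|g\|_{D_\pi}$; since $|f|\le 1$ and $\pi$ is a probability vector, $\|f - \bar f{\bf 1}\|_{D_\pi}\le 2$, giving $|\kappa^*-\kappa(f)| \le 4\|g\|_{D_\pi}$ and hence $(\kappa^*-\kappa(f))^2 \le 16\,\|g\|_{D_\pi}^2$. It now remains to relate $\|g\|_{D_\pi} = \|\Phi\theta^* - V\|_{D_\pi}$, for the \emph{particular} fixed point $\theta^*$ of the projected Bellman equation \eqref{eq:projBellman} and the \emph{particular} $V\in S_f$ appearing above, to the best-approximation error $\mathcal E = \inf_{\theta}\inf_{V\in S_f}\|\Phi\theta - V\|_{D_\pi}$. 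This is the standard projected-fixed-point approximation gap, and it is where the constant $\lambda\in(0,1)$ enters: one shows that the operator $\Pi_{D_\pi,W_\Phi}T$ is a contraction (or is nonexpansive with a contraction factor $\lambda$) on the relevant subspace in $D_\pi$-norm, so that the fixed-point error $\|\Phi\theta^* - V\|_{D_\pi}$ is bounded by $\mathcal E/\sqrt{1-\lambda^2}$ via the usual Pythagorean/Galerkin argument. Combining yields $(\kappa^*-\kappa(f))^2 \le 16\,\mathcal E^2/(1-\lambda^2)$, which is \eqref{eq:approxerr}.

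The main obstacle I anticipate is the last step: establishing the contraction factor $\lambda$ and the bound $\|\Phi\theta^* - V\|_{D_\pi}\le \mathcal E/\sqrt{1-\lambda^2}$. The operator $T$ here is $TV = \mathcal F - \bar f{\bf e} + PV$, which is an \emph{undiscounted} (average-reward) Bellman-type operator, so $P$ alone is only nonexpansive, not a strict contraction, in $D_\pi$-norm. The contraction must therefore be obtained on the subspace orthogonal to ${\bf 1}$ (equivalently, after restricting to $E$ via $\Pi_{2,E}$), where the spectral gap of $P$ — quantified by $\Delta_1$ or $\Delta_2$ above — provides a strict contraction modulus $\lambda<1$. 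Care is needed to verify that the projection $\Pi_{D_\pi,W_\Phi}$ onto $W_\Phi$ does not destroy this property and that the Pythagorean decomposition $\|\Phi\theta^* - V\|_{D_\pi}^2 = \|\Phi\theta^* - \Pi_{D_\pi,W_\Phi}V\|_{D_\pi}^2 + \|\Pi_{D_\pi,W_\Phi}V - V\|_{D_\pi}^2$ applies cleanly, so that the first summand can be absorbed using the contraction and the second is at most $\mathcal E^2$. I would handle this by mirroring the classical Tsitsiklis--Van Roy average-reward analysis, using the positivity of the quadratic form $\mathbf{v}^T D_\pi(I-P)\mathbf{v}$ on non-constant vectors (already invoked for $\Delta_1,\Delta_2$) to extract $\lambda$.
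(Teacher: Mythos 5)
Your proposal is correct and follows essentially the same route as the paper: subtract the two instances of the variance formula to obtain $2\,\Exp{\pi}{(f(X)-\bar f)\,(\Phi\theta^*-V)(X)}$, apply Cauchy--Schwarz in the $D_\pi$ inner product with $\|f-\bar f{\bf 1}\|_{D_\pi}\le 2$ to get the factor $16$, optimize over the constant shift so the error becomes the $D_\pi$-distance from $\Phi\theta^*$ to the set $S_f$, and invoke the Tsitsiklis--Van Roy average-reward analysis for the $\mathcal E/\sqrt{1-\lambda^2}$ bound. The paper likewise defers that last contraction step to \cite{tsitsiklis1999average}, so your anticipated obstacle is handled there exactly as you describe.
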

Notice that $\mathcal E$, and hence the RHS above, equals $0$ if the chosen basis functions are such that the span of these basis functions intersects with the set $S_f$ of $V$ functions, i.e., if the approximation architecture is capable of representing some value function from $S_f$. This is particularly true for the tabular setting, where $d = S$ and the basis vectors are the standard basis in this dimension, spanning the entire space. 

To prove Proposition~\ref{prop:approxerror}, we first bound the approximation error in estimation of $V$ using the proposed Algorithm from Section~\ref{sec:alglfa}. This is analogous to that for the value function in \cite[Theorem 3]{tsitsiklis1999average}. Next, using this bound, we arrive at the bound in~\eqref{eq:approxerr}. We refer the reader to Section~\ref{app:approxerr} for a proof of the proposition.

%\shubhada{Example where this approximation error is smaller than approximating $\kappa$ using $v$, the stationary variance.}

\subsection{Proof of  Theorem~\ref{th:finiteboundlfa}}\label{sec:proofsketch}
The proof of Theorem~\ref{th:finiteboundlfa} proceeds in two steps. In the first step, we formulate the proposed algorithm as a linear SA update, and establish an appropriate contraction property of the update matrices averaged under the stationary distribution $\pi$ of the underlying Markov chain. In the second step, we use a Lyapunov drift argument to establish a MSE bound for a linear SA update satisfying the contraction property from step one. To this end, we use Poisson equation again (recall its use in the estimator design), to bound the errors associated with the noisy updates (since the algorithm does not know $\pi$) at each step. We detail these below.

\subsubsection{Step 1: Algorithm as a Linear SA}\label{sec:alglinearSA}
For $X_k\in\calS$, $X_{k+1}\in\calS$, and $k \in \N$, define 
\[f_k := f(X_k), ~~\phi_k = \phi(X_k).\numberthis\label{eq:r_k}\]
Recall that for each $x\in\calS$, $\phi(x)\in\R^d$. Further, recall that  
$Y_k := (X_k, X_{k+1})$, and let  $\vec{\bf 0}$ be the $0$ vector in $\R^{d}$. Define $A(Y_k)\in\R^{d + 3}\times\R^{d + 3}$, 
\[  A(Y_k):=\begin{bmatrix} 
                    -c_1  & \vec{\bf 0}^T & 0 & 0\\
                    -\Pi_{2,E}\phi_k & \Pi_{2,E}\phi_k (\phi^T_{k+1}  - \phi^T_k) &\vec{\bf 0} & \vec{\bf 0} \\
                    0 & c_2\phi^T_k  & -c_2 & 0\\
                    c_3 f_k & 2 c_3 f_k \phi^T_k & -2c_3 f_k & -c_3
                \end{bmatrix}, \]
and $b(Y_k) \in \R^{d+3}$ as    \[     b(Y_k)^T := \begin{bmatrix}
                    c_1 f_k ~~~~
                    f_k \lrs{\Pi_{2,E}\phi_k}^T ~~~ 0 ~~~
                    -{c_3}f^2_k
                \end{bmatrix}.
\]
Let $A = \Exp{\pi}{A(Y_k)}$ and $b = \Exp{\pi}{b(Y_k)}$ denote the stationary averages of $A(Y_k)$ and $b(Y_k)$, respectively (see Appendix~\ref{app:avgmatrices} for exact form of $A$ and $b$), and let $\Theta^T := [\theta_1~\theta^T_2~\theta_3~\theta_4]$ with $\theta_2\in E$. Observe that $\Theta = \Theta^*$ (defined before~\eqref{eq:kappastar}) is the unique solution to $A\Theta + b = 0$ with $\theta \in E$ (see Appendix~\ref{app:odedisc} for a detailed justification). However, the algorithm doesn't have access to matrices $A$ and $b$. Instead, we use SA to solve for $\Theta^*$, which corresponds to the following update rule at step $k+1$ with step size $\alpha_k$ (also, recall update from~\eqref{eq:SAForLinear}): 
\begin{equation}\label{eq:linearsa}
    {\Theta}_{k+1} = {\Theta}_k + \alpha_k \lrp{ A(Y_k) {\Theta}_k + b(Y_k) }.
\end{equation} 
Observe that the above equation with $\Theta^T_k := [ \bar{f}_k ~ \theta^T_k ~ \widetilde{V}_k ~ \kappa_k ]$ coincides with the updates in the proposed algorithm (Algorithm in Section~\ref{sec:alglfa}). %Hence, it is a linear SA update~\eqref{eq:linearsa} with the matrices $A(Y_k)$ and $b(Y_k)$ as defined earlier in this section.

Next, once we have the linear SA form in~\eqref{eq:linearsa}, to establish the convergence of iterates, in Lemma~\ref{th:negdeflfa}, we prove that $A$, the average matrix,  is contractive (under a semi-norm) for suitably chosen step-size constants $c_1$,  $c_2$, and $c_3$. This result is crucial in establishing the finite sample bounds in Theorem~\ref{th:finiteboundlfa}.

\begin{lemma}\label{th:negdeflfa}
Under the conditions on $c_1$, $c_2$, and $c_3$ from Theorem~\ref{th:finiteboundlfa}, the matrix $A$ satisfies
\[\min \limits_{\substack{\Theta \in \R\times E \times \R \times \R, \|\Theta\|^2_2 = 1}} ~ - \Theta^T A \Theta  ~ > ~ \frac{\Delta_2}{20}> 0.
\]
\end{lemma}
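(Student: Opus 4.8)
The plan is to evaluate the averaged matrix $A=\Exp{\pi}{A(Y_k)}$ in closed form, expand the quadratic form $-\Theta^{T}A\Theta$ on the subspace $\R\times E\times\R\times\R$, and then reduce the claim to the positive definiteness, with explicit margin $\tfrac{\Delta_2}{20}$, of a $4\times 4$ scalar quadratic form. The three displayed constraints on $c_1,c_2,c_3$ should turn out to be exactly the conditions that make this reduction succeed.

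First I would compute $A$ block-by-block using the stationary law of $Y_k=(X_k,X_{k+1})$ (with $X_k\sim\pi$ and $X_{k+1}\sim P(X_k,\cdot)$): this gives $\Exp{\pi}{\phi_k\phi_k^{T}}=\Phi^{T}D_\pi\Phi$ and $\Exp{\pi}{\phi_k\phi_{k+1}^{T}}=\Phi^{T}D_\pi P\Phi$, so the $\theta$-block of $A$ collapses to $-\Pi_{2,E}\Phi^{T}D_\pi(I-P)\Phi$. Writing $\bar{\phi}:=\Exp{\pi}{\phi(X)}=\Phi^{T}\pi$ and $g:=\Exp{\pi}{f(X)\phi(X)}=\Phi^{T}D_\pi\mathcal F$, the off-diagonal blocks average to $-\Pi_{2,E}\bar{\phi}$, $c_2\bar{\phi}^{T}$, $2c_3 g^{T}$, $c_3\bar{f}$ and $-2c_3\bar{f}$, while the three scalar diagonal entries remain $-c_1,-c_2,-c_3$. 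Using that $\Pi_{2,E}$ is a symmetric projection with $\Pi_{2,E}\theta_2=\theta_2$ for $\theta_2\in E$, the quadratic form becomes
\begin{align*}
-\Theta^{T}A\Theta
&= c_1\theta_1^2 + \theta_2^{T}\Phi^{T}D_\pi(I-P)\Phi\,\theta_2 + c_2\theta_3^2 + c_3\theta_4^2\\
&\quad + \theta_1\,\bar{\phi}^{T}\theta_2 - c_2\theta_3\,\bar{\phi}^{T}\theta_2 - c_3\bar{f}\,\theta_1\theta_4 - 2c_3\theta_4\,g^{T}\theta_2 + 2c_3\bar{f}\,\theta_3\theta_4 .
\end{align*}
By the definition of $\Delta_2$, the restriction $\theta_2\in E$ yields $\theta_2^{T}\Phi^{T}D_\pi(I-P)\Phi\,\theta_2\ge\Delta_2\|\theta_2\|_2^2$; this is precisely where the subspace restriction (rather than all of $\R^d$) is essential, since the middle block is only positive semidefinite on the full space.

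Next I would bound the five cross terms by Cauchy--Schwarz, using $\|\bar{\phi}\|_2\le 1$, $\|g\|_2\le 1$ and $\abs{\bar{f}}\le 1$ (all immediate from $\|\phi(s)\|_2\le 1$ and $\abs{f}\le 1$). Setting $z:=(\abs{\theta_1},\|\theta_2\|_2,\abs{\theta_3},\abs{\theta_4})^{T}$, this gives $-\Theta^{T}A\Theta\ge z^{T}Qz$ with
\[
Q=\begin{bmatrix} c_1 & -\tfrac12 & 0 & -\tfrac{c_3}{2}\\[2pt] -\tfrac12 & \Delta_2 & -\tfrac{c_2}{2} & -c_3\\[2pt] 0 & -\tfrac{c_2}{2} & c_2 & -c_3\\[2pt] -\tfrac{c_3}{2} & -c_3 & -c_3 & c_3\end{bmatrix}.
\]
Since $\|z\|_2=\|\Theta\|_2=1$, it then suffices to establish $Q\succ\tfrac{\Delta_2}{20}I$. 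I would do this by first peeling off $\theta_1$ via completing the square in the terms $c_1\theta_1^2-\abs{\theta_1}\|\theta_2\|_2-c_3\abs{\theta_1}\abs{\theta_4}$, splitting the $c_1$ budget so that a margin $\tfrac{\Delta_2}{20}$ survives in the $\theta_1$ direction; here the lower bound $c_1\ge\tfrac{1}{2\Delta_2}+\tfrac{\Delta_2}{2}$ is exactly what guarantees enough of the $\|\theta_2\|_2^2$ and $\theta_4^2$ budgets remain intact. What is left is a $3\times 3$ form in $(\theta_2,\theta_3,\theta_4)$, which must be shown positive definite with the same margin.

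I expect this last $3\times 3$ step to be the main obstacle: $\theta_2$ couples to both $\theta_3$ and $\theta_4$, and $\theta_3$ couples to $\theta_4$, so no single variable decouples cleanly and the Young-weight bookkeeping needed to push every residual diagonal coefficient to $\ge\tfrac{\Delta_2}{20}$ is delicate. Verifying positive definiteness reduces to a Sylvester/discriminant computation in $c_2,c_3,\Delta_2$, and I anticipate that the feasibility of this computation is exactly encoded by the coupled constraint $c_3-\tfrac{498 c_3^2}{7\Delta_2}+\tfrac{7\Delta_2}{498}\le c_2\le -3c_3+\tfrac{5}{83}\sqrt{498 c_3\Delta_2-17\Delta_2^2}$: reality of the square root forces $c_3\ge\tfrac{17}{498}\Delta_2$, which is consistent with (and implied by) the stated range $\tfrac{5}{249}(5-2\sqrt2)\Delta_2\le c_3\le\tfrac{5}{249}(5+2\sqrt2)\Delta_2$. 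The remaining care is to confirm that the feasible region for $(c_1,c_2,c_3)$ is nonempty and that within it the completing-square/Young choices can be made simultaneously, so that $z^{T}Qz>\tfrac{\Delta_2}{20}\|z\|_2^2=\tfrac{\Delta_2}{20}$ for all admissible $\Theta$.
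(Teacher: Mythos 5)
Your reduction is the same one the paper uses: compute the stationary average $A$ block by block, expand $-\Theta^T A\Theta$, use $\theta_2\in E$ to drop the projection and invoke the definition of $\Delta_2$, and then Cauchy--Schwarz the five cross terms (with $\|\Phi^T D_\pi{\bf 1}\|_2\le 1$, $\|\Phi^T D_\pi\mathcal F\|_2\le 1$, $|\bar f|\le 1$) down to a scalar quadratic form in $(|\theta_1|,\|\theta_2\|_2,|\theta_3|,|\theta_4|)$ with nonpositive off-diagonal couplings. This matches the paper's intermediate lower bound essentially coefficient for coefficient, and your observation that it suffices to minimize over the nonnegative orthant (equivalently, that $Q\succeq\tfrac{\Delta_2}{20}I$ is the right target, since all couplings are nonpositive) is sound. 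Up to this point the argument is correct.

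The gap is that you stop exactly where the content of the lemma begins: the statement is precisely that the three displayed constraints on $c_1,c_2,c_3$ certify the explicit margin $\Delta_2/20$, and you only ``anticipate'' that they do. The paper executes this step by a nested sequence of one-dimensional minimizations, each evaluated in closed form via $\min_{x\in[0,c]}\lrset{ax-b\sqrt{cx-x^2}}=\tfrac{c}{2}\lrp{a-\sqrt{a^2+b^2}}$ (Lemma~\ref{lem:aux1}): first over $\theta_1$ given $\|\theta_2\|_2$, then over $\theta_3$ given $x^2=\theta_3^2+\theta_4^2$, then over $x$. Each stage yields a term of the form $\tfrac12\lrp{y_i-\sqrt{y_i^2+(\cdot)^2}}$, which the paper sets equal to $d_i\Delta_2$ and then fixes $c=-\tfrac{7}{24}$, $d_1=-\tfrac12$, $d_3=-\tfrac{75}{166}$; these choices produce exactly the displayed ranges for $c_1,c_2,c_3$ and the final bound $(1+d_1+d_3)\Delta_2=\tfrac{4}{83}\Delta_2>\tfrac{\Delta_2}{20}$. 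Your proposed Sylvester-minor route on the $4\times4$ matrix $Q$ is viable in principle, but the relevant minors are quartic in $(c_2,c_3,\Delta_2)$, and nothing in your argument verifies that the specific constants $\tfrac{498}{7}$, $\tfrac{5}{83}$, $\tfrac{5}{249}(5\pm2\sqrt2)$ make them positive with the required slack. Until that computation (or some equivalent of the paper's sequential-minimization device) is carried out, the lemma is set up but not proved.
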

We refer the reader to Appendix~\ref{app:th:negdeflfa} for a proof of Lemma~\ref{th:negdeflfa}. In the special case of $E = \R$, the above property can be shown to imply that $A$ is Hurwitz.

\begin{remark}
    The constant $20$ in the above Lemma is not special, and is a result of certain simplifying bounds used in the proof of the Lemma, which helped us in arriving at explicit bounds on $c_1$, $c_2$, and $c_3$ in Theorem~\ref{th:finiteboundlfa}. Modifying this constant would only change the constraints on the step size constants. We present the most general conditions (but slightly opaque) on $c_1$, $c_2$, and $c_3$ in Equations~\eqref{eq:cond1},~\eqref{eq:cond2}, and~\eqref{eq:cond3} in Appendix~\ref{app:th:negdeflfa}.
\end{remark}

\subsubsection{Step 2: Bounding MSE for Linear SA with Markov Noise} \label{sec:MSEBound} 
In Step 1, we formulated the updates in our estimator as a linear SA. One could use existing finite sample bounds on MSE for linear SA \cite{srikant2019finite}, which require $A$ to be Hurwitz. However, Lemma~\ref{th:negdeflfa} only guarantees this when $E = \R$, i.e., when $\bf{1}$ (the all $1$s vector) does not belong to the column space of $\Phi$. Recall that we do not make any such assumption. 

Recently, \cite{zhang2021finite} extend the MSE bound of \cite{srikant2019finite} for the setting when $E \ne \R$. Using \cite[Theorem 1]{zhang2021finite}, we could get MSE bound of  $O(\frac{\log n}{n})$ at step $n$ for our estimator with $\alpha_n = O(\frac{1}{n})$. Instead, we conduct a tighter analysis of linear SA, in general. We use Poisson equation to get a tight control on the error terms  in our analysis in Appendix~\ref{app:Step2} (Theorem~\ref{th:appfiniteboundlfa}), which result in $O(\frac{1}{n})$ MSE bound, instead. Our bounds from Theorem~\ref{th:appfiniteboundlfa} immediately imply those in Theorem~\ref{th:finiteboundlfa}. We refer the reader to Section~\ref{sec:verifyconds2} for details.

\section{Vector Valued Rewards}
Our algorithm and analysis easily extend to the setting where the function ${\bf f}$ is $\frakd > 1$ dimensional, i.e., ${\bf f}: \calS\rightarrow\R^\frakd$. We illustrate this with a discussion on the extension of  Algorithm~\ref{alg:PE} for estimating the (per-step) asymptotic covariance matrix (defined next) and a brief proof sketch. 

The (per-step) asymptotic covariance matrix for a function defined on states of a Markov chain starting from state $s$ is defined as 
\[ \lim\limits_{n\rightarrow\infty}\operatorname{Cov}\lrp{ \frac{1}{\sqrt{n}} \sum\limits_{l=0}^n {\bf f}(X_l) ~\bigg|~ X_0 = s }, \]
where, for a vector-valued function ${\bf r}:\calS\rightarrow\R^\frakd$ with mean $\bar{\bf r}:= \Exp{\pi}{{\bf r}(X)}$, where ${\bar{\bf r}}\in\R^\frakd$ (represented as column vectors), $$\operatorname{Cov}\lrp{\bf  r } := \E{ (\bf r-\bar{\bf r})(\bf r-\bar{\bf r})^T }.$$ 

For $i\in [\frakd] := \{1, \dots, \frakd\}$ and $s\in\calS$, let $f^{(i)}(s)$ denote the $i^{th}$ component of the (column) vector ${\bf f}(s) \in \R^{\frakd}$ of the function value in state $s$, and let 
$$\bar{\bf f} := \Exp{\pi}{{\bf f}(X)}$$ 
be the mean vector under $\pi$, the stationary distribution of $\mathcal M$. Then, $\bar{\bf f} \in \R^\frakd$. For $i\in [\frakd]$, let $\bar{f}^{(i)}\in\R$ denote its $i^{th}$ coordinate. Then, there exists a solution $V^{(i)}: \calS \rightarrow \R$ ($V^{(i)}\in\R^{S}$) of the Poisson equation for $f^{(i)} - \bar{f}^{(i)}$ for each $i\in [\frakd]$, i.e., for each $s\in \calS$, and $i\in[\frakd]$, $V^{(i)}(s)$ solves 
\[ f^{(i)}(s) - \bar{f}^{(i)} = V^{(i)}(s) - \sum\limits_{s'\in\calS} P(s,s') V^{(i)}(s'). \]
In fact, as in the settings of previous sections ($\frakd=1$ case), one can show that for each coordinate $i \in [\frakd]$, if $V^{(i)}$ is a solution, then $V^{(i)} + c {\bf e}$ is also a solution. We denote by $V^{*(i)}$ the solution normalized so that $\mathbb{E}_{\pi}[V^{*(i)}] = 0$.

As in Proposition~\ref{prop:var}, one can argue that the asymptotic covariance matrix defined above is independent of the starting state, and is given by 
\[ \frakC({\bf f}) = \lim\limits_{n\rightarrow\infty} ~ \operatorname{Cov}\lrp{ \frac{1}{\sqrt{n}}\sum\limits_{l=0}^{n-1} {\bf f}(X_l) ~\bigg| ~X_0 \sim \pi }. \] 
For $s\in\calS$, let ${\bf V}(s) \in \R^\frakd$ be
    \[ {\bf V}^T(s) = [V^{(1)}(s) ~\dots~ V^{(\frakd)}(s)]. \]
Moreover, given (column) vectors ${\bf u}$ and ${\bf v}$ in $\R^\frakd$, let ${\bf u}\otimes {\bf v}^T$ denote their outer product, resulting in a matrix in $\R^{\frakd\times\frakd}$. Then, $\frakC(\bf f)$, which is a matrix in $\R^{\frakd\times\frakd}$, has the following equivalent representation.
\begin{align*}
    \frakC({\bf f})  
    = \Exp{\pi}{\lrp{{\bf f}(X) - \bar{\bf f}} \otimes {\bf V}^T(X)} &+ \Exp{\pi}{{\bf V}(X) \otimes \lrp{{\bf f}(X) - \bar{\bf f}}^T }   \\
    & \quad - \Exp{\pi}{ \lrp{{\bf f}(X) - \bar{\bf f} }\otimes\lrp{{\bf f}(X) - \bar{\bf f}}^T }. \numberthis\label{eq:cmu}
\end{align*}
This can be proven using arguments similar to those in the proof for the reformulation for $\kappa(f)$ in~\eqref{eq:kmu2} in Proposition~\ref{prop:var}. Notice that the expression for the diagonal entries of $\frakC({\bf f})$ in the formulation in~\eqref{eq:cmu} is exactly same as that for $\kappa(f)$ in~\eqref{eq:kmu2}. 

Next, let $\bar{\bf V}\in\R^\frakd$ be $\bar{\bf V} := \Exp{\pi}{ {\bf V}(X) }$. As in~\eqref{lem:var_form_twotraj}, we now re-express~\eqref{eq:cmu}. 
\begin{align*}
    \frakC({\bf f})  
    &= \Exp{\pi}{{\bf f}(X)\otimes {\bf V}^T(X)} + \Exp{\pi}{{\bf V}(X)\otimes {\bf f}^T(X)} - \Exp{\pi}{{\bf f}(X) \otimes \bar{\bf V}^T} \\ 
    &\qquad\qquad - \Exp{\pi}{\bar{\bf V} \otimes {\bf f}^T(X)} + \Exp{\pi}{{\bf f}(X)\otimes {\bf f}^T(X)} + \Exp{\pi}{ {\bf f}(X) \otimes \bar{\bf f}^T }. \numberthis\label{eq:cmu2traj}
\end{align*}

Again, notice that the expression for the diagonal entries of $\frakC({\bf f})$ in the above formulation is same as that for $\kappa(f)$ in~\eqref{lem:var_form_twotraj}. Our SA algorithm (Algorithm~\ref{alg:PE}) can be extended to estimate $\frakC({\bf f})$ using the formulation in~\eqref{eq:cmu2traj}. In particular, the updates in~\eqref{eq:JmuUpdate},~\eqref{eq:deltaUpdate},~\eqref{eq:QUpdate}, and~\eqref{eq:kappaUpdate} in the tabular setting can be replaced by the following update equations. Here, for  $k\in\N$ and $s\in\calS$, $\bar{\bf f}_k$, ${\bf \delta}_k$, ${\bf V}_k(s)$, and $\bar{\bf V}_k$ are vectors in $\R^\frakd$, and $\frakC_{k}$ are matrices in $\R^{\frakd\times\frakd}$.
\begin{equation*}
    \bar{\bf f}_{k+1} = \bar{\bf f}_{k} + c_1 \alpha_k ({\bf f}(X_k) - \bar{\bf f}_k),
\end{equation*}
\begin{equation*}
    {\bf \delta}_k := {\bf f}(X_k)  - \bar{\bf f}_k + ({\bf V}_k(X_{k+1}) - {\bf V}_k(X_k)),
\end{equation*}
\begin{equation*}
    {\bf V}_{k+1}(x) = \begin{cases} 
        {\bf V}_k(x) + {\alpha_k} {\delta}_k \lrp{ 1 - 1/{S} } , \quad &\text{ for } x = X_k,\\
        {\bf V}_k(x) - {\alpha_k}{\delta}_k/{S}, \quad &\text{ otherwise}, 
    \end{cases}
\end{equation*}
\begin{equation*}
    \bar{\bf V}_{k+1} = \bar{\bf V}_k + c_2\alpha_k ( {\bf V}_k(X_k) - \bar{\bf V}_k ),
\end{equation*}
\begin{align*}
    \frakC_{k+1} &=  (1-c_3\alpha_k)\frakC_k + c_3\alpha_k \bigg( {\bf f}(X_k) \otimes {\bf V}^T_k(X_k) + {\bf V}_k(X_k) \otimes {\bf f}^T(X_k) - {\bf f}(X_k) \otimes \bar{\bf V}^T_k \\
    & \qquad\qquad\qquad\qquad\qquad\qquad - \bar{\bf V}_k \otimes {\bf f}^T(X_k) + {\bf f}(X_k) \otimes {\bf f}^T(X_k) + {\bf f}(X_k)\otimes \bar{\bf f}^T_k \bigg). 
\end{align*}

The linear function approximation setting of Section~\ref{sec:lfa} can also be extended similarly to the current setup, with a set of $d$ feature vectors for each component in $[\frakd]$. Let's denote the feature vectors for approximating $V^{(i)}$ by $\{\tilde{\phi}^{(i)}_1, \dots, \tilde{\phi}^{(i)}_d\}$, and let $\Phi^{(i)}$ denote the $S \times d$ matrix with these $d$ vectors as columns. Algorithm from Section~\ref{sec:alglfa} can be appropriately modified to estimate $\frakC({\bf f})$ using SA for~\eqref{eq:cmu2traj}. The resulting algorithm can again be analyzed by formulating it as a linear SA for each pair $(i,j)$ of $\frakC({\bf f})$, for $i\in[\frakd]$, $j\in[\frakd]$, and $i\le j$,  giving similar guarantees on mean-squared estimation error for each component of $\frakC({\bf f})$ as in Theorems~\ref{th:finitebound} and~\ref{th:finiteboundlfa}.

\section{Applications in Reinforcement Learning}\label{sec:RL}
Consider an average reward reinforcement learning (RL) framework, where the transition dynamics of the underlying Markov decision process (MDP) are unknown \cite{BertsekasT96,sutton2018reinforcement}. An RL algorithm can obtain a sample of the MDP under a given policy, which specifies how actions are chosen in a given state. The traditional goal in an average reward RL problem is to find a policy that maximizes the long run average reward. While the need to optimize over average reward is well motivated, for applications in safety-critical domains like healthcare and finance, it is also crucial to control adverse outcomes. To address this, within the domain of risk-sensitive MDPs, several risk measures have been considered, namely variance \cite{Sobel82VD,filar1989variance}, conditional value-at-risk \cite{chow2014algorithms}, exponential utility \cite{whittle1990risk}, the general class of coherent risk measures \cite{tamar2015coherent}, and cumulative prospect theory \cite{prashanth2015cumulative}, which is not coherent. The choice of the risk measure largely depends on the application at hand.

We consider variance as a risk measure. Several previous works incorporate variance in a constrained setting, where the goal is to maximize the average reward (which is an expectation), while ensuring a certain bound on the variance. This is the so-called ``mean-variance tradeoff'', considered in the seminal work of Markowitz \cite{markowitz1952portfolio}, and later in MDP contexts, cf. \cite{mandl1971variance,Sobel82VD,filar1989variance}. An alternative to such a formulation is to consider the exponential utility formulation, see \cite{Arrow1971,Howard1972}, where one optimizes the exponential. The constrained formulation is preferred over the exponential utility for two reasons. First, the mean-variance tradeoff can be controlled directly through a parameter that is a bound on the variance, while this trade-off is implicit in an exponential utility formulation. Second, the algorithms for the latter formulation do not extend easily when one considers feature-based representations and function approximation, see \cite[Section 7.2]{MAL-091} for a detailed discussion.

Broadly, for a given policy, two different notions of variance are suggested for an average reward MDP in \cite{filar1989variance}. The first notion is the asymptotic variance, while the second one is the per-period stationary variance. The latter has been studied in an RL setting in \cite{prashanth2016variance}, while the former has hardly been investigated in the literature, to the best of our knowledge. We use the asymptotic variance as a measure of the risk associated with a given policy, which is the variance of the random variable whose mean is typically optimized.
This asymptotic variance can be decomposed into two additive terms,  where the first term coincides with per-step variance, while the other term involves correlations between states across time periods.
In a setting where the state sequence is independent, the second term is zero, while it is not in any non-trivial MDP. A mean-variance optimization formulation would consider maximizing the expectation of a random variable which represents the long-run average reward, and it is natural to consider the variance of this random variable; also see Section~\ref{sec:motivation_var}.

In this section, we consider the problem of policy evaluation for asymptotic variance, where the goal is to estimate the asymptotic variance associated with the Markov chain induced by a given stationary policy. This problem of estimating the asymptotic variance  is a vital sub-problem in mean-variance policy optimization, for instance, as a critic in an actor-critic framework, cf. \cite{prashanth2016variance}. For the discounted RL setting, a TD type algorithm for estimating variance has been proposed/analyzed in \cite{Tamar13TD}, while a TD algorithm with provable finite sample guarantees, which caters to the variance risk measure in an average reward RL setting, is not present in the literature to the best of our knowledge.

The rest of this section is organized as follows. We begin by formally introducing the average reward RL setting and the problem setup in Section~\ref{sec:RL_setup}. In Section~\ref{sec:motivation_var}, we motivate the choice of asymptotic variance as a natural measure of risk associated with a given policy in this framework and discuss the related literature in Section~\ref{sec:litRL}. We design a novel TD-like linear SA algorithm for policy evaluation in a tabular setting in Section~\ref{sec:RL_Alg}. Finally, we conclude in Section~\ref{sec:RL_bounds} with the finite sample error bounds for the proposed algorithm, proving ${O}(\frac{1}{n})$ rate of convergence for the mean-squared error, where  $n$ is the time step. We defer the algorithm and its finite sample error bounds for the setting of linear function approximation in RL to Appendix~\ref{sec:lfa_RL}.

\subsection{Setup}\label{sec:RL_setup}
Consider an infinite-horizon, average-reward MDP specified by $(\calS, \calA, r, p)$, where $\calS = \lrset{1, \dots, \abs{S}}$ denotes the finite state-space, and $\calA = \lrset{1, \dots, \abs{A}}$ denotes the  action space. At each time $k$, the agent is in state $S_k\in\calS$, takes an action $A_k\in\calA$, receives a reward $r(S_k,A_k)$, and transitions to state $S_{k+1}$. Here, $r:\calS\times\calA \rightarrow \R$, and the next state $S_{k+1}$ is sampled according to $p(S_k,\cdot, A_k)$, where, $p:\calS\times\calS\times\calA \rightarrow [0,1]$, is the map that for states $s, s'$ and action $a$ associates probability $p(s,s',a)$ with the transition from state $s$ to $s'$, when action $a$ is taken. 

A stationary policy $\mu: \calS \rightarrow \Sigma_{\abs{\calA}}$ is a map from state space to a probability-simplex in $\R^{\abs{\calA}}$, i.e., it maps each state to a distribution over the actions. Given a stationary policy $\mu$ and a state $x$, the associated probability of transitioning from state $x$ to $x'$ is given by 
$$P_\mu(x,x') = \sum\nolimits_{a\in\calA} \mu(a|x)p(x,x',a). $$

\begin{assumption}\label{asmp1} The sets $\calS$ and $\calA$ are finite. Under the stationary policy $\mu$, the induced Markov chain $\calM_1$ with state space in $\calS$ and transition probabilities given by $P_\mu$ is irreducible and aperiodic. \end{assumption}
This is a standard assumption in literature (see \cite{tsitsiklis1999average,bertsekas2012dynamic}), and guarantees that each state is visited infinitely often. As a consequence of Assumption~\ref{asmp1}, we have a unique stationary distribution associated with $\calM_1$, and the Markov chain starting from any initial distribution, converges to the stationary distribution geometrically-fast \cite[Section 4.3]{levin2017markov}. In particular, let $\pi_\mu$ denote the unique stationary distribution on $\calS$ that satisfies $\pi^T_\mu P_\mu = \pi^T_\mu$.%, and let $S_t$ denote the state of the Markov chain $\mathcal M_1$ at time $t$. Then, Assumption~\ref{asmp1} guarantees that there exist constants $C_1 > 0$ and $\rho_1 \in (0,1)$ such that,
%\[ \sup\limits_{s\in\calS} ~~ d_{TV}(\mathbb{P}(S_t| S_0 = s), \pi_\mu) \le C_1 \rho^t_1, \text{  for all $t\ge 0$ }, \]
%where $\mathbb{P}(\cdot | S_0 =s)$ denotes the probability of $\mathcal M_1$ being in state $S_t$ at time $t$, starting from state $s$, and for probability measures $P$ and $Q$, $d_{TV}(P,Q)$ denotes the total variation distance between $P$ and $Q$. 

Next, let $X_k := (S_k, A_k)$. Observe that the process $\calM_2:=\lrset{X_k}$ is also a Markov chain with finite states. Let its state space be denoted by $\calX \subset \calS\times\calA$. For any two states $x_1 := (s_1,a_1)$ and $x_2 := (s_2, a_2)$ in $\calX$, the probability of transitioning from $x_1$ to  $x_2$ under $\calM_2$ is given by 
\begin{equation}\label{eq:P2}
    P_2(x_1, x_2) := p(s_1,s_2,a_1)\mu(a_2|s_2).
\end{equation}
For $s\in\calS$ and $a\in\calA$,  define 
$d_\mu(s,a) := \pi_\mu(s)\mu(a|s).$

\begin{remark}\label{rem:MC2}
    Under Assumption~\ref{asmp1}, $\calM_2$ has a unique stationary distribution $d_\mu$ defined above, and it mixes geometrically-fast. This follows from \cite[Proposition 1]{bhatnagar2016multiscale}. 
\end{remark}

\paragraph*{Notation} Let $D_\mu \in \R^{\abs{\calS}\abs{\calA}} \times \R^{\abs{\calS}\abs{\calA}}$ be the diagonal array of stationary distribution $d_\mu$, i.e., %$D_\mu((s,a),(s,a)) = d_\mu(s,a) $  for all $(s, a)\in\calS\times\calA$.
\[D_\mu((s,a), (s,a)) := \begin{cases}
    d_\mu(s,a),&\text{ for } (s,a)\!\in\!\calS\times\calA,\\
    0, &\text{ otherwise}.
\end{cases}\] 
For vectors $x$ and $y$ in $\R^{\abs{\calS}\abs{\calA}}$, let 
$<x,y>_{D_\mu} := x^T D_\mu y,$
and $\|x\|_{D_\mu} := x^T D_\mu x$ denote the $D_\mu$ weighted inner product and the induced norm, respectively.

\paragraph*{Average Reward}  The long-term per-step expected reward accumulated by a stationary policy $\mu$ starting from state $s$ is given by  
\begin{equation}\label{eq:Jmu_def} \lim\limits_{n\rightarrow \infty} ~  \frac{1}{n}\E{ \sum\limits_{k=0}^{n-1} r(S_k, A_k) | S_0 = s }. \end{equation}
Here, the expectation is with respect to both the transition dynamics of the environment, as well as the randomness in the policy $\mu$. It is well known that under Assumption~\ref{asmp1}, this per-step expected reward in~\eqref{eq:Jmu_def} is a constant that is independent of the starting state \cite{bertsekas2012dynamic}. We denote this quantity by $J_\mu$. It can be shown that
\begin{equation}\label{eq:Jmu}
    J_\mu = \Exp{d_\mu}{r(S,A)} = \sum\limits_{x\in\calS, a\in\calA} \pi_\mu(x) \mu(a|x) r(x, a). 
\end{equation}
Here, $\Exp{d_\mu}{\cdot}$ denotes the expectation when $(S,A)$ is sampled according to the distribution $d_\mu$, which is defined after~\eqref{eq:P2}. 

\paragraph*{Asymptotic Variance} Observe that $r(\cdot,\cdot)$ is a function defined on the state space of $\mathcal M_2$, which is fixed by the given policy $\mu$. In this work, we consider the problem of estimating the associated asymptotic variance, 
\begin{equation}\label{eq:var_def_RL}
    \lim\limits_{T\rightarrow\infty} ~ \frac{1}{T}\V\lrs{\sum\limits_{k=0}^{T-1}r(S_k,A_k) \bigg| S_0 = s}.
\end{equation}
It can be shown to be a constant, independent of the starting state. We denote it by $\kappa_\mu$. Moreover (Proposition~\ref{prop:var}),
\[ \kappa_\mu = \lim\limits_{n\rightarrow \infty} \V\lrs{\frac{1}{\sqrt{n}} \sum\limits_{k=0}^{n-1} r(S_k, A_k) \bigg| (S_0, A_0)\sim d_\mu }.\]

\paragraph*{Q Function} Since $J_\mu = \Exp{d_\mu}{r(S,A)}$, there exists a solution of the following Poisson equation or Bellman equation \cite[Section 21.2]{douc2018markov} $Q_\mu:\calX\rightarrow\R$ such that for each $(s,a)\in\calX$, and for $P_2$ given in~\eqref{eq:P2},
\begin{align*}
r(s,\!a)\!-\!J_\mu =~ &  Q_\mu(s,a)  -\!\sum\limits_{\!(\!s'\!,a'\!)\in\calX\!}\! P_2((s,a)),\!(s',a'))Q_\mu(s',a'). \numberthis \label{eq:qmu_temp}
\end{align*}
Let $Q_\mu$ be a solution to~\eqref{eq:qmu_temp}. Then, $Q_\mu + c{\bf e}$ for any constant $c\in \R$ is also a solution. Let $Q^*_\mu$ denote the solution normalized so that it is orthogonal to ${\bf e}$, i.e., $Q^{*T}_\mu {\bf e} = 0$. 

\begin{remark} Note that the equivalent formulations for $\kappa_\mu$ from Proposition~\ref{prop:var} also hold with $r$, $J_\mu$, $Q_\mu$, and $d_\mu$ replacing $f$, $\bar{f}$, $V$, and $\pi$, respectively.
\end{remark}

In this section, our goal is to estimate the asymptotic variance associated with the Markov chain induced by a given stationary policy, i.e., estimate $\kappa_\mu$. Note that this goal is orthogonal to that of estimating the variance of iterates of any particular learning algorithm like temporal-difference (TD), or designing algorithms with minimum variance of the iterates of the algorithm, studied extensively in \cite{devraj2017fastest,yin2020asymptotically,chen2020explicit,hao2021bootstrapping}; see Section~\ref{sec:lit} for a discussion and an example highlighting this distinction.

\subsection{Motivation for Asymptotic Variance}\label{sec:motivation_var}
As discussed in the beginning of Section~\ref{sec:RL}, different notions of variance have been considered in the risk-sensitive RL literature. We now motivate the choice of asymptotic variance in a risk-sensitive setting

First, in the average reward setting, a classical goal is to optimize the long-term expected cumulative reward, which corresponds to expectation of  the random variable $\sum_k r(S_k,A_k)$. The asymptotic variance considered in this work, corresponds to the variance of the same random variable. Since both the mean and variance of $\sum_k r(S_k, A_k) $ summed up to $n$ terms are $O(n)$, we equivalently consider $\frac{1}{n}$ scaling of both the mean and variance to arrive at $J_\mu$ and $\kappa_\mu$ in~\eqref{eq:Jmu_def} and~\eqref{eq:var_def_RL}, respectively. 

Second, consider an investor receiving return $r(S_k, A_k)$ on an investment at each time $k$. The total return of the investor in $n$ steps is $\sum_{k=1}^n r(S_k, A_k)$. A risk-averse investor would aim to maximize the average cumulative return subject to its variance being small. The variance on the return at each time is $\Exp{d_\mu}{(r(S,A)-J_\mu)^2}$. If the sequence $\lrset{r(S_k,A_k)}_{k\ge 1}$ were i.i.d. according to $d_\mu$, the variance of total reward over $n$ steps is 
%constraint would reduce to a bound on 
$n \Exp{d_\mu}{(r(S,A)-J_\mu)^2}$. 
This corresponds to the first term in~\eqref{eq:kmu1}. However, since the returns are Markovian, and thus are correlated across time steps, this term captures only the variance over individual times, and does not capture the correlations across time steps. The second term in~\eqref{eq:kmu1} captures the covariance of returns across time, and so the variance of the total reward $\sum_{k=1}^n r(S_k, A_k)$ is approximately $n\kappa_{\mu}$. In other words, the per step variance can be modeled by $\lim\limits_{n\rightarrow\infty}\frac{1}{n} \V[\sum_{k=1}^n r(S_k,A_k)] = \kappa_\mu$.

Finally, it is important to emphasize that prior research on variance-constrained average reward RL, cf. \cite{prashanth2016variance}, treats the first term in \eqref{eq:kmu1} as a surrogate for variance. In a setting where the state sequence is independent, the second term is zero, while in a Markovian context, the correlation between time steps is non-negligible, and should not be overlooked. 

\subsection{Related Literature}\label{sec:litRL} In this subsection, we briefly discuss the RL literature that is relevant to our work. 

    \paragraph*{MDPs: Average Reward and Risk-sensitivity} MDPs have a long history. We refer the reader to classical books \cite{puterman1994markov, bertsekas2012dynamic} for a textbook introduction to MDPs in general, and to \cite{howard1960dynamic,blackwell1962discrete,brown1965iterative,veinott1966finding,Arapostathis1993} for an introduction to average-reward MDPs, in particular. Risk-sensitive objectives have also been well-studied in the MDP setting. For instance, see \cite{Sobel82VD,Filar95PP,mannor2013algorithmic} for variance in discounted and average reward MDPs, \cite{borkar2002risk,borkar2010risk,whittle1990risk} for the exponential utility formulation, \cite{Ruszczynski10RA} for Markov risk measures, \cite{chow2014algorithms} for conditional value-at-risk (CVaR). 
    However, in a MDP setting, algorithms require complete knowledge of the underlying model, which may not be feasible in many practical applications.

\paragraph*{Risk-neutral RL} In the risk-neutral RL setting, expected value is the sole objective. TD type algorithms have been proposed for policy evaluation in discounted as well as average reward settings, and their asymptotic convergence is shown in \cite{tsitsiklis1996analysis,tsitsiklis1999average}, respectively. TD algorithms have also been used in actor-critic style algorithms for solving the problem of control, cf. \cite{Konda2003,bhatnagar2009natural}. Asymptotic convergence of the classical Q-learning algorithm was established in \cite{borkar2000ode,tsitsiklis1994asynchronous}. In the non-asymptotic regime, finite-sample mean-square convergence bounds for classical discounted setting algorithms such as TD, TD($\lambda$), n-step TD, and Q-learning, have been developed in \cite{chen2021lyapunov}. On the other hand, in the average reward setting, finite-sample bounds for TD are derived in \cite{zhang2021finite}. 
    
\paragraph*{Risk-sensitivity in RL}    In a risk-sensitive optimization setting, the goal is either to optimize the usual expected value objective, while factoring a risk measure in a constraint, or optimize a risk measure in the objective. Tail-based risk measures such as variance, CVaR are meaningful to consider as a constraint, while risk measures such as exponential utility and prospect theory can be considered as the optimization objective, since they consider the entire distribution. In the context of RL, variance as a risk measure has been studied earlier in a discounted reward MDP setting in \cite{Mihatsch02RS,la2013actor}, in a stochastic shortest path and discounted settings \cite{Tamar13VA, Tamar13TD,tamar2012policy,tamar2016learning}, and in an average reward MDP setting in \cite{prashanth2016variance}. Exponential utility has been explored in an average reward RL setting earlier, see \cite{borkar2010learning} for a survey and \cite{moharrami2022policy} for a recent contribution. \cite{huang2021convergence,ding2022sequential,kose2021risk} study CVaR as a risk measure in the discounted setting. Other risk measures such as more general coherent risk measures and  cumulative prospect theory have been explored in an RL setting in the literature, and some representative works include \cite{prashanth2014cvar,prashanth2015cumulative,markowitz2023risk,ruszczynski2023risk}. 

\subsection{The Algorithm}\label{sec:RL_Alg}
We now present a TD-type algorithm for estimating $\kappa_\mu$. Recall that $r(\cdot,\cdot)$ is a function defined on $\calX$, states of the discrete time Markov chain $\mathcal M_2$ (introduced above~\eqref{eq:P2}), with stationary expectation $J_\mu$. Further, we have from~\eqref{eq:qmu_temp} that $Q_\mu$ is a solution for the corresponding Poisson equation. Let $\bar{Q}_\mu := \Exp{d_\mu}{Q_\mu(S,A)}$. Then,
\begin{equation*}
    \kappa_\mu = \Exp{d_\mu}{2r(S,A)Q_\mu(S,A) - 2r(S,A)\bar{Q}_\mu - r^2(S,A) + r(S,A)J_\mu}.\numberthis\label{eq:kappaRL}
\end{equation*}
This immediately follows from the formulation in~\eqref{lem:var_form_twotraj} with $r(\cdot,\cdot)$, $J_\mu$, $Q_\mu(\cdot,\cdot)$, $\bar{Q}_\mu$, and $d_\mu$ replacing $f(\cdot)$, $\bar{f}$, $V(\cdot)$, $\bar{V}$, and $\pi$, respectively. Algorithm~\ref{alg:PE_RL}, presented next, corresponds to a linear SA algorithm for estimating $\kappa_\mu$ using the above formulation.

\begin{algorithm2e}[tb]
\RestyleAlgo{ruled}
\DontPrintSemicolon
\SetNoFillComment
\SetKwInOut{Input}{Input}\SetKwInOut{Output}{Output}
%\setstretch{1.05}
\caption{Estimating $\kappa_\mu$: Tabular Setting}\label{alg:PE_RL}
    %\vspace{0.4em}
    \BlankLine

    \KwIn{Time horizon $n > 0$, constants $c_1 > 0$, $c_2 > 0$, $c_3 > 0$ and step-size sequence $\lrset{\alpha_k}$.}
    \BlankLine

    {\bf Initialization: } $\bar{r}_0  = 0$, $Q_0 = \vec{\bf  0} $, $\bar{Q}_0 = 0$, and $\kappa_0 =0$.\\
    \BlankLine

\For{$k\leftarrow 1$ \KwTo $n$}{
\BlankLine

Observe ($S_k, A_k, r(S_k,A_k), S_{k+1}, A_{k+1}$).\;

\BlankLine

\tcp{\textbf{Average reward $J_\mu$ estimation}} 
        \vspace{-1.5em}
        \begin{align}
            \bar{r}_{k+1} = \bar{r}_k + c_1\alpha_k (  r(S_k,A_k) -\bar{r}_k ).\label{eq:JmuUpdateRL}
        \end{align}
\;
        \vspace{-1.5em}

\tcp{\textbf{$Q_\mu$ estimation}}
\vspace{-1.5em}
    \begin{align*}
    & \delta_k := r(S_k,A_k) - \bar{r}_{k} +  Q_k(S_{k+1},A_{k+1}) - Q_k(S_k,A_k), \label{eq:deltaUpdateRL}\numberthis\\
    & Q_{k+1}(s,a) = \numberthis\label{eq:QUpdateRL}
        \begin{cases}
            Q_k(s,a) + \alpha_k \delta_k \lrp{1-{1}/{|\calS||\calA|}},  &\text{ for } (s,a) = (S_k,A_k),\\
            Q_k(s,a) -  {\alpha_k\delta_k}/{|\calS||\calA|}, &\text{ otherwise}.
        \end{cases}  
    \end{align*}
\;
\vspace{-1.5em}
\tcp{\textbf{$\bar{Q}_\mu$ estimation}}
\vspace{-1.5em}
    \begin{align*}
    & \bar{Q}_{k+1} := \bar{Q}_k + c_2 \alpha_k ( Q_k(S_k,A_k) - \bar{Q}_k ), \label{eq:barVUpdateRL}\numberthis\\
    \end{align*}
\;
\vspace{-1.5em}

\tcp{\textbf{Asymptotic Variance $\kappa_\mu$ estimation}}
\vspace{-1.5em}
        \begin{align*}
        &\kappa_{k+1} =  (1-c_3\alpha_k) \kappa_k + c_3\alpha_k\bigg( 2r(S_k,A_k)Q_k(S_k,A_k) - 2r(S_k,A_k)\bar{Q}_k \\
        &\qquad\qquad\qquad\qquad\qquad\qquad\qquad\qquad - r^2(S_k,A_k) + r(S_k,A_k)\bar{r}_k \bigg). \numberthis\label{eq:kappaUpdateRL}
        \end{align*}
        \vspace{-1.5em}
    }
 \Output{Estimates $\kappa_n$, $\bar{Q}_n$, $Q_n$, $\bar{r}_n$} 
    
\end{algorithm2e}

\subsection{Convergence Rates}\label{sec:RL_bounds}
We now bound the estimation error of Algorithm~\ref{alg:PE_RL}. Let $Y_k:= (S_k, A_k,S_{k+1}, A_{k+1}).$ Observe that $\mathcal M_3 := \lrset{Y_k}_{k\ge 1}$ is a Markov chain that mixes geometrically fast \cite[Proposition 1]{bhatnagar2016multiscale}, and has a unique stationary  distribution. Let $\mathcal R \in \R^{|\calS||\calA|}$ be the vector of rewards, and $r_{\max} := \|\mathcal R\|_\infty$. Without loss of generality, for simplicity of presentation, we assume that $r_{\max} = 1$. Recall that $D_\mu$ denotes the diagonal matrix of the stationary distribution $d_\mu$ of $\calM_2$. Define 
\[ \tilde{\Delta}_1:= \min\!\lrset{\!{\bf v}^T \! D_\mu(I\!-\!P_2) {\bf v} ~|~ {\bf v} \!\in\! \R^{S}, \|{\bf v}\|_2 = 1, {\bf v}^T\!{\bf 1} = 0}, \numberthis\label{eq:tildedelta1}\]
where $P_2$ is the transition matrix for $\mathcal M_2$. It can be argued that $\tilde{\Delta}_1 > 0$ \cite[Lemma 7]{tsitsiklis1997average}. 

For the step-size constants $c_1$, $c_2$, and $c_3$ (inputs to Algorithm~\ref{alg:PE_RL}), define 
$$\eta  :=  \lrp{c^2_1  + 5 + 2c^2_2 + 10c^2_3}^\frac{1}{2}.$$ 
Let $Q^*_\mu$ be the Q Function orthogonal to ${\bf 1}$ in $\ell_2$-norm, $\bar{Q}^*_\mu := \Exp{d_\mu}{Q^*_\mu(S,A)}$, and define $\Theta_\mu := [J_\mu~Q^{*T}_\mu~\bar{Q}^*_\mu~\kappa_\mu].$ The following theorem bounds the estimation error of Algorithm~\ref{alg:PE_RL}, which immediately implies ${O}(\frac{1}{n})$ convergence rate for the mean-squared estimation error. 

\begin{theorem}\label{th:finitebound_RL}
    Consider Algorithm~\ref{alg:PE_RL} with $c_1$, $c_2$, and $c_3$ satisfying:
    \[c_1\ge \frac{1}{2\tilde{\Delta}_1} + \frac{\tilde{\Delta}_1}{2}, \qquad  \frac{5}{249}(5-2\sqrt{2})\tilde{\Delta}_1 \le c_3\le \frac{5}{249} (5+2\sqrt{2})\tilde{\Delta}_1, \]
    and
    \[ c_3 -\frac{498 c^2_3}{7\tilde{\Delta}_1} + \frac{7 \tilde{\Delta}_1}{498}   \le c_2 \le -3 c_3 + \frac{5 }{83} \sqrt{498 c_3 \tilde{\Delta}_1 - 17 \tilde{\Delta}^2_1} . \]
    Recall that $\eta$ and $\Theta_\mu$ were defined below~\eqref{eq:tildedelta1}. For a constant $B > 1$, let  $\xi_1 := 3(1+\|\Theta_\mu\|_2)^2$, $\xi_2 := 112B(1+\|\Theta_\mu\|_2)^2$, and let $\Theta_n:= [\bar{r}_n ~ Q^T_{n}~ \bar{Q}_n ~ \kappa_n]$ denote the vector of iterates of Algorithm~\ref{alg:PE_RL} at step $n$.
    \begin{enumerate}%[label=(\alph*)]
        \item[(a)] Let $\alpha_i = \alpha$ for all $i$, such that     
        \[ \alpha < \min\lrset{ \frac{40}{\tilde{\Delta}_1}, \frac{1}{28B\lrp{1+ \frac{20\eta^2}{\tilde{\Delta}_1} }}}. \]
        Then, for all $n\ge 1$, 
        \[ \E{\|\Theta_n - \Theta_\mu\|^2_2} \le \xi_1\lrp{1-\frac{\tilde{\Delta}_1\alpha}{40}}^{n} +  \frac{20\xi_2 \alpha\eta^2}{\tilde{\Delta}_1} + \xi_2\alpha. \]

        \item[(b)]  Let $\alpha_i = \frac{\alpha}{i+h}$ for all $i \ge 0$, with $\alpha$ and $h$ chosen so that
        \[ h \ge \max\lrset{1+\frac{\alpha\tilde{\Delta}_1}{40}, 1+28B\lrp{ \frac{20\eta^2\alpha}{\tilde{\Delta}_1} + \alpha + \frac{20}{\tilde{\Delta}_1} }},\quad\text{and}\quad \alpha > \frac{40}{\tilde{\Delta}_1}.\]
        Then, for all $n\ge 0$, 
        \begin{align*}
        \E{\|\Theta_n- \Theta_\mu\|^2_2}  &\le \xi_1\lrp{\frac{h}{n+h}}^\frac{\alpha\tilde{\Delta}_1}{40} + \frac{5\xi_2e^2\eta^2(20+\tilde{\Delta}_1)\alpha^2}{(n+h)(\alpha\tilde{\Delta}_1 -40)} + \frac{\xi_2 \alpha}{n+h}.
        \end{align*} 
    \end{enumerate}
\end{theorem}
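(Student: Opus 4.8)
The plan is to prove Theorem~\ref{th:finitebound_RL} by reducing it to the already-established tabular bound in Theorem~\ref{th:finitebound} (equivalently, to the general result Theorem~\ref{th:finiteboundlfa}), exploiting the fact that Algorithm~\ref{alg:PE_RL} is nothing but Algorithm~\ref{alg:PE} instantiated on the Markov chain $\mathcal{M}_2 = \{X_k\}$ with $X_k = (S_k, A_k)$. First I would fix the dictionary translating the policy-evaluation problem into the Markov-chain variance-estimation setup of Section~\ref{sec:varest:tabular}: treat $\mathcal{M}_2$ as the underlying DTMC, its state space $\calX \subseteq \calS\times\calA$ as $\calS$, the reward $r(\cdot,\cdot)$ as $f$, the average reward $J_\mu$ as $\bar{f}$, the relative $Q$-function $Q_\mu$ as the Poisson solution $V$, its stationary mean $\bar{Q}_\mu$ as $\bar{V}$, and $d_\mu$ as $\pi$. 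Under this correspondence the four update blocks of Algorithm~\ref{alg:PE_RL}, equations~\eqref{eq:JmuUpdateRL}--\eqref{eq:kappaUpdateRL}, coincide line for line with those of Algorithm~\ref{alg:PE}, equations~\eqref{eq:JmuUpdate}--\eqref{eq:kappaUpdate}, and the reformulation~\eqref{eq:kappaRL} is exactly~\eqref{lem:var_form_twotraj} under the substitution, as already observed in the remark following~\eqref{eq:qmu_temp}. Moreover $\tilde{\Delta}_1$ in~\eqref{eq:tildedelta1} is the image of $\Delta_1$ with $(D_\pi, P)$ replaced by $(D_\mu, P_2)$, while $\eta$ and $\Theta_\mu$ are the images of $\eta$ and $\Theta_\pi$.

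Next I would check that $\mathcal{M}_2$ meets all the hypotheses under which Theorem~\ref{th:finitebound} was proved. By Assumption~\ref{asmp1} and Remark~\ref{rem:MC2}, $\mathcal{M}_2$ is irreducible and aperiodic on its recurrent class, admits the unique stationary distribution $d_\mu$, and mixes geometrically fast \cite{bhatnagar2016multiscale}; the reward is bounded with $r_{\max}=1$ without loss of generality, matching $f_{\max}\le 1$; and $\tilde{\Delta}_1 > 0$ follows from \cite[Lemma~7]{tsitsiklis1997average}, exactly as $\Delta_1 > 0$ did. With the dictionary and the hypotheses in place, the two-step machinery of Section~\ref{sec:proofsketch} transfers verbatim. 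Step~1 rewrites the iteration in the linear-SA form $\Theta_{k+1} = \Theta_k + \alpha_k(A(Y_k)\Theta_k + b(Y_k))$ driven by the geometrically mixing chain $\mathcal{M}_3 = \{Y_k\}$ with $Y_k = (S_k, A_k, S_{k+1}, A_{k+1})$, and the contraction Lemma~\ref{th:negdeflfa} (specialized to the tabular case, where the projection is onto the orthogonal complement of ${\bf 1}$) yields $-\Theta^T A\Theta > \tilde{\Delta}_1/20$ under the stated ranges of $c_1, c_2, c_3$. Step~2 then feeds this contraction into the general MSE bound Theorem~\ref{th:appfiniteboundlfa}, producing the constant- and diminishing-step-size estimates that are exactly the two displayed inequalities with $\tilde{\Delta}_1$ in place of $\Delta_1$; substituting the dictionary back recovers the claimed bounds on $\E{\|\Theta_n - \Theta_\mu\|^2_2}$.

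The only point that is not pure bookkeeping --- and hence the step I expect to require the most care --- is the mismatch between the true state space $\calX$ of $\mathcal{M}_2$ and the ambient space $\R^{|\calS||\calA|}$ in which Algorithm~\ref{alg:PE_RL} performs its ${\bf 1}$-orthogonal projection, since the $Q$ update~\eqref{eq:QUpdateRL} normalizes by $1/(|\calS||\calA|)$ rather than by $1/|\calX|$. When $\mu$ has full support this is vacuous because $\calX = \calS\times\calA$; in general I would argue that the coordinates $(s,a)$ with $d_\mu(s,a) = 0$ are never sampled, carry zero stationary mass, and therefore contribute nothing to the stationary average $A = \Exp{d_\mu}{A(Y_k)}$ beyond a decoupled block that leaves the semi-norm contraction on the relevant subspace intact. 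Consequently the analysis effectively reduces to the recurrent subchain on $\calX$, and the finite-sample bounds of Theorem~\ref{th:finitebound} carry over unchanged with $S$ read as $|\calX|$. Once this reduction is justified, no new estimates are needed and the theorem follows.
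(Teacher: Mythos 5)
Your proposal is correct and follows essentially the same route as the paper, which proves Theorem~\ref{th:finitebound_RL} by exactly this reduction --- it observes that Algorithm~\ref{alg:PE_RL} is Algorithm~\ref{alg:PE} run on the chain $\mathcal M_2$ with $r(\cdot,\cdot)$, $J_\mu$, $Q_\mu$, $\bar Q_\mu$, $d_\mu$ in place of $f$, $\bar f$, $V$, $\bar V$, $\pi$, and then invokes Theorem~\ref{th:finitebound} (hence Theorem~\ref{th:finiteboundlfa} and Theorem~\ref{th:appfiniteboundlfa}) with $\tilde\Delta_1$ replacing $\Delta_1$, omitting further details. Your closing discussion of the case $\calX\subsetneq\calS\times\calA$ is extra care the paper does not take; note that the theorem as stated already presupposes full support of $\mu$, since otherwise the literal definition of $\tilde\Delta_1$ in~\eqref{eq:tildedelta1} yields $\tilde\Delta_1=0$ (take a unit ${\bf v}$ orthogonal to ${\bf 1}$ supported on two zero-mass coordinates), so the reduction is cleanest if one simply works under that implicit assumption.
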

As earlier, since $\E{(\kappa_n - \kappa_\mu)^2} \le \E{\|\Theta_n - \Theta_\mu\|^2_2}$, the bound in the Theorem above also hold for MSE in estimating $\kappa_\mu$. Theorem~\ref{th:finitebound_RL} immediately follows from Theorem~\ref{th:finitebound} by recalling that estimating $\Theta_\mu$ corresponds to the setup of Section~\ref{sec:varest:tabular}, with $r(\cdot,\cdot)$, $Q_\mu(\cdot,\cdot)$, $d_\mu$ replacing $f(\cdot)$, $V(\cdot)$, and $\pi$. Hence, we omit its proof. 

\begin{corollary}\label{cor:MSE_PE}
    Estimates from Algorithm~\ref{alg:PE_RL} with step size as in Theorem~\ref{th:finitebound_RL} $(b)$ satisfy $\E{\|Q_n - Q_\mu\|^2_2} = O(\frac{1}{n})$ MSE bound for policy evaluation problem in the average reward RL setting.
\end{corollary}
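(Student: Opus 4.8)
The plan is to read off the desired bound directly from Theorem~\ref{th:finitebound_RL}(b) by exploiting the fact that the $Q$-iterate is a block of the full parameter vector. First I would note that in Algorithm~\ref{alg:PE_RL} the concatenated iterate is $\Theta_n = [\bar{r}_n~Q^T_n~\bar{Q}_n~\kappa_n]$, and its limit is $\Theta_\mu = [J_\mu~Q^{*T}_\mu~\bar{Q}^*_\mu~\kappa_\mu]$, so that $Q_n - Q^*_\mu$ is exactly the middle block of $\Theta_n - \Theta_\mu$. Since the squared Euclidean norm decomposes additively across blocks, namely $\|\Theta_n - \Theta_\mu\|^2_2 = |\bar{r}_n - J_\mu|^2 + \|Q_n - Q^*_\mu\|^2_2 + |\bar{Q}_n - \bar{Q}^*_\mu|^2 + |\kappa_n - \kappa_\mu|^2$, and every omitted block contributes a non-negative term, dropping the $\bar{r}$, $\bar{Q}$, and $\kappa$ coordinates gives the pointwise domination $\|Q_n - Q^*_\mu\|^2_2 \le \|\Theta_n - \Theta_\mu\|^2_2$, and hence, after taking expectations, $\E{\|Q_n - Q^*_\mu\|^2_2} \le \E{\|\Theta_n - \Theta_\mu\|^2_2}$.

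The second step is to invoke the bound of Theorem~\ref{th:finitebound_RL}(b) on the right-hand side and verify that each of its three summands is $O(1/n)$. The second and third summands are manifestly $O(1/(n+h)) = O(1/n)$, with constants depending on $\eta$, $\alpha$, $\tilde{\Delta}_1$, and $\|\Theta_\mu\|_2$ through $\xi_2$. For the first, polynomially-decaying summand $\xi_1(h/(n+h))^{\alpha\tilde{\Delta}_1/40}$, I would use the standing step-size hypothesis $\alpha > 40/\tilde{\Delta}_1$ of part (b), which forces the exponent $\alpha\tilde{\Delta}_1/40 > 1$; thus this term decays strictly faster than $1/n$ and is absorbed into the $O(1/n)$ estimate. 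Combining the three bounds yields $\E{\|Q_n - Q^*_\mu\|^2_2} = O(1/n)$.

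There is essentially no analytic obstacle here---the corollary is a bookkeeping consequence of the master bound---so the only point that needs care is notational: the iterate $Q_n$ converges to the normalized solution $Q^*_\mu$ (orthogonal to ${\bf 1}$) that the projection built into update~\eqref{eq:QUpdateRL} singles out, and I would read the $Q_\mu$ in the statement as this canonical representative. This identification is the natural one for policy evaluation, since the Poisson/Bellman equation~\eqref{eq:qmu_temp} determines $Q_\mu$ only up to an additive multiple of ${\bf e}$; any other solution differs from $Q^*_\mu$ by a known constant shift, and an $O(1/n)$ MSE is meaningful precisely for the normalized target. With that identification, the two steps above complete the argument.
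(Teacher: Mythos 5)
Your proposal is correct and matches the paper's own argument: the paper likewise derives the corollary from the block inequality $\E{\|Q_n - Q_\mu\|^2_2} \le \E{\|\Theta_n - \Theta_\mu\|^2_2}$ together with the $O(\frac{1}{n})$ bound of Theorem~\ref{th:finitebound_RL}(b). Your additional remarks --- that the exponent $\alpha\tilde{\Delta}_1/40 > 1$ makes the first term faster than $1/n$, and that $Q_\mu$ should be read as the normalized solution $Q^*_\mu$ --- are accurate clarifications of details the paper leaves implicit.
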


The above Corollary follows since $\E{\|Q_n - Q_\mu\|^2_2} \le \E{ \|\Theta_n - \Theta_\mu\|^2_2 }$. Hence, the bounds in Theorem~\ref{th:finitebound_RL} also hold for the MSE in estimating $Q_\mu$. Recall that the previously best-known rate for MSE for policy evaluation in average reward RL is $O(\frac{\log n}{n})$ \cite{zhang2021finite}. A tighter analysis for bounding the MSE of linear SA in presence of Markov noise, that uses Poisson Equations (again) in the proof, leads to this improvement by a multiplicative factor of $\log n$ in Corollary~\ref{cor:MSE_PE}. We refer the reader to Appendix~\ref{app:Step2} for a proof using this approach. 

Next, to estimate $\kappa_\mu$ or $Q_\mu$ up to MSE $\epsilon^2$ using Algorithm~\ref{alg:PE_RL} with diminishing step size, requires $n = O(\frac{1}{\epsilon^2})O(\frac{\|Q_\mu\|^2_2}{\tilde{\Delta}^4_1})$. This follows from inverting the above bound at $\epsilon^2$, as in Corollary~\ref{cor:samplecomplex}. This bound depends implicitly on $|S||A|$ via $\|Q_\mu\|_2$ and $\tilde{\Delta}_1$, which depends on the mixing properties of the underlying Markov chain. Observe that compared to \cite[Corollary 1]{zhang2021finite}, here, in addition to improving the log dependence of $\epsilon$  in the sample complexity bound, we improve the dependence on the mixing-time parameters. In particular, the bound in \cite{zhang2021finite} has an additional multiplicative term $K$ that relates to the mixing properties.  

\begin{remark}
    Unlike in the discounted RL setting, we point out that the operators in the average reward RL framework are not contractive under any norm. In fact, as in \cite{zhang2021finite}, we establish semi-norm contraction (Lemma~\ref{th:negdeflfa}) and convergence in an appropriate subspace (orthogonal to all $1$s vector). This semi-norm contraction is sufficient since the formulation of asymptotic variance in~\eqref{eq:kappaRL} is unaffected by constant shifts in the estimation of $Q_\mu$.
\end{remark}

\begin{remark}
    When the underlying state and action spaces are large, we can approximate $Q_\mu$ by its projection onto a lower dimensional linear subspace spanned by a given set of (fixed) vectors. The bounds obtained above can be extended to this setting using the ideas from Section~\ref{sec:lfa}. For completeness, we present the algorithm and performance bounds for estimating $\kappa_\mu$ with this approximation architecture in Appendix~\ref{sec:lfa_RL}.
\end{remark}

\section{Conclusions and Future Work}
We proposed a novel recursive estimator for the asymptotic variance of a Markov chain that requires $O(1)$ computation at each step, does not require storing historical samples, and enjoys the optimal $O(\frac{1}{n})$ rate of convergence for MSE. We use Stochastic Approximation to design the proposed estimator.  Using Lyapunov drift arguments, and Poisson equation to handle the Markov noise in analysis, we established finite sample bounds on the performance of the proposed estimator. We generalized it in several directions, including covariance matrix estimation for vector-valued functions of a Markov chain and estimating the asymptotic variance of a given stationary policy in an average reward RL framework. Our tighter analysis using Poisson equations to handle the Markov noise improves the MSE bound for linear SA in presence of Markov noise, in general, and for policy evaluation problem in RL, in particular. We established the ${O}(\frac{1}{n})$ rate of convergence of the proposed algorithm in the tabular and linear function approximation settings. We also characterized the approximation error in the latter setting. Below, we discuss a few directions for future research. 
\begin{itemize}
\item We discuss the dependence of the sample complexity in terms of the underlying Markov chain parameters such as the size of state space $S$ and mixing time  parameter $\Delta_1$ in Corollary~\ref{cor:samplecomplex}. Deriving lower bounds on the sample complexity in terms of these problem dependent parameters is an important direction that remains open. Tight lower bounds could potentially serve as a guide to designing estimators with improved dependence on these parameters. This will be useful for arriving at efficient estimators in settings where $S$ is large, for example, in modern RL applications.

\item Our estimator with diminishing step size, for example in Algorithm~\ref{alg:PE} with $\alpha_k = O(\frac{1}{k})$, uses multiple SA updates at each time, each with a step size differing from the other by only a constant ($c_1$, $c_2$, and $c_3$). One of the limitations of this so-called single-time scale approach is the constraints on step sizes $c_1$, $c_2$, and $c_3$ in order to achieve $O(\frac{1}{n})$ MSE bound. This is a well known limitation of single-time scale SA which can be overcome by using a multi-time scale SA. In multi-time scale SA, each SA update uses a different step size (in terms of its dependence on $k$ at time $k$). However, deriving finite sample guarantees for multi-time scale SA is a challenging direction. Designing an estimator for $\kappa(f)$ that uses multiple time scales, with finite sample guarantees on MSE is an interesting open direction for future research.

%Second, it would be interesting to design a multiple-time scale SA algorithm to get around the conditions on the step size constants that are necessary for the convergence of a linear SA algorithm. 

\item As we saw in Section~\ref{sec:lfa}, when the state space is large, one approach to estimating $\kappa(f)$ is to approximate the value function $V$ using a given approximation architecture (linear in our setting), and use this approximate value function to arrive at an estimate for an approximation of $\kappa(f)$, which we call $\kappa^*$. In general, this approach and our LFA results can extend easily for nice enough infinite state Markov chains. However, a big challenge is to get an exact estimation for $\kappa(f)$ in these settings. Even in finite but large state space settings, getting an exact estimation for $\kappa(f)$ remains open.

\item In Section~\ref{sec:lit}, we mentioned that the asymptotic behavior of regenerative simulation based estimators for $\kappa(f)$ has been extensively studied in literature. CLTs for these estimators have been proven, which can be used to establish an optimal asymptotic rate of $O(\frac{1}{n})$ in MSE. Investigating the finite sample behavior of these estimators and comparing its performance to our SA-based estimator is an interesting direction for future research.

\item On the RL front, an immediate future direction is to design an actor-critic style algorithm for identifying a policy that maximizes the long-run average reward subject to an asymptotic variance constraint with provable finite sample guarantees. Our policy-evaluation algorithm (Algorithm~\ref{alg:PE_RL} or Algorithm~\ref{alg:PE_lfa_RL}) can serve as a building block for designing the critic in these algorithms.
\end{itemize}

\section*{Acknowledgements}
This work was partially supported by NSF grants EPCN-2144316 and CPS-2240982. The authors thank the International Centre for Theoretical Sciences (ICTS), TIFR, Bangalore. This work is a result of a collaboration  that initaited during the meeting -- Data Science: Probabilistic and Optimization Methods  (code:ICTS/dspom2023/7) at ICTS. S. Agrawal also thanks Shaan Ul Haque for helpful discussions. 

\bibliography{BibTex.bib}
\bibliographystyle{plainnat}

\appendix
\section{Finite Sample Bounds for Linear SA in Presence of Markov Noise}\label{app:Step2}
In this section, we prove a mean squared error (MSE) bound for iterates of a linear stochastic approximation (SA) update. We begin by describing the setup. For $d > 0$ let $\iter_0 = \vec{0} \in \R^d$,  and $\alpha_k$ denote the step size at time $k$ (to be specified later). Let $\mathbb S$ be a finite dimensional subspace of $\R^d$ ($\mathbb S$ can also be $\R^d$), and let $\iter_k \in \R^d$ be generated as 
\[ \iter_{k+1}  = \iter_k + \alpha_k  ( \A(Y_k)\iter_k +  \vb(Y_k)), \numberthis\label{app:linearSAUpdate} \]
where $(Y_i)_{i\in\N}$ are the state space of the underlying Markov chain with stationary distribution $\pi$, ${\A}(\cdot)$ and $\vb(\cdot)$ are the noisy update matrices of appropriate dimensions such that for $\iter_0 = \vec{0}$, all the iterates $\iter_k$ lie in  $\mathbb S$, for all $k\in \N$. Furthermore, let 
$$\A:= \Exp{\pi}{\A(Y)}\quad\text{ and } \quad \vb := \Exp{\pi}{\vb(Y)}$$ 
denote the average matrices, and let $\iter^* \in \mathbb S$ satisfy $\A\iter^* + \vb = 0$.

Recall that for $\mathbb{S} = \R^d$, \cite{srikant2019finite} develop a finite sample MSE bound for iterates $\iter_k$ when $\A$ is Hurwitz, and \cite{zhang2021finite} extend it for the setting where $\mathbb{S}$ is a proper  subspace of $\R^d$. In this section, we improve on both these results  by a multiplicative factor of $\log k$ by using Poisson equation for handling the error terms in our analysis. Use of Poisson equations for analyzing SA algorithms has previously appeared in \cite{benveniste2012adaptive}. More recently, \cite{kaledin2020finite}
 and \cite{haque2023tight} use Poisson equation to analyse a two-time scale algorithm in presence of Markov noise. 
 
\begin{theorem}\label{th:appfiniteboundlfa}
    Consider iterates ${\iter}_k$ generated by the update in~\eqref{app:linearSAUpdate} with $\iter_0  = \vec{0}$. Suppose there exists $\gamma_2 > 0$ such that the average matrix $\A$ satisfies
    \[ \min\limits_{\iter \in \mathbb S: \|\iter\|_2 = 1}~  - \iter^T \A ~ \iter \ge \gamma_2 > 0 .\numberthis\label{eq:contractfactor} \]
    Let $\upb := \max\{1, \|\A(\cdot)\|_2, \|\vb(\cdot)\|_2\}$, and for $B > 1$, let  $\psi_1 :=  3\lrp{1+\|\iter^*\|_2}^2$, $\psi_2 := 112B(1+\|\iter^*\|_2)^2$.
    \begin{enumerate}%[label=(\alph*)]
        \item[(a)] Let $\alpha_i = \alpha$ for all $i$, such that     
        \[ \alpha < \min\lrset{ \frac{2}{\gamma_2}, \frac{1}{28B\lrp{1+ \frac{\upb^2}{\gamma_2} }}}. \]
        Then, for all $k\ge 1$, 
        \[ \E{\|\iter_k - \iter^*\|^2_2} \le \psi_1\lrp{1-\frac{\gamma_2\alpha}{2}}^{k} +  \frac{\psi_2 \alpha\upb^2}{\gamma_2} + \psi_2\alpha. \]

        \item[(b)]  Let $\alpha_i = \frac{\alpha}{i+h}$ for all $i \ge 0$, with $\alpha$ and $h$ chosen so that
        \[ h \ge \max\lrset{1+\frac{\alpha\gamma_2}{2}, 1+28B\lrp{ \frac{\upb^2\alpha}{\gamma_2} + \alpha + \frac{1}{\gamma_2} }},\quad\text{and}\quad \alpha > \frac{2}{\gamma_2}.\]
        Then, for all $k\ge 0$, 
        \begin{align*}
        \E{\|\iter_k- \iter^*\|^2_2}  &\le \psi_1\lrp{\frac{h}{k+h}}^\frac{\alpha\gamma_2}{2} + \frac{5\psi_2e^2\upb^2(1+\gamma_2)\alpha^2}{(k+h)(\alpha\gamma_2 -2)} + \frac{\psi_2 \alpha}{k+h}.
        \end{align*} 
    \end{enumerate}
\end{theorem}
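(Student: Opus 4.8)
The plan is to run a Lyapunov-drift argument on the squared error $z_k := \iter_k - \iter^*$ with Lyapunov function $\|z_k\|_2^2$, the key novelty being that the Markov-noise cross term is controlled through a Poisson equation rather than a crude mixing-time window. First I would rewrite~\eqref{app:linearSAUpdate} in error form: since $\A\iter^* + \vb = 0$ and $\iter_k = z_k + \iter^*$, one checks that $\A(Y_k)\iter_k + \vb(Y_k) = \A z_k + \epsilon_k$, where the noise term is $\epsilon_k := (\A(Y_k)-\A)\iter_k + (\vb(Y_k)-\vb)$, so that $z_{k+1} = z_k + \alpha_k(\A z_k + \epsilon_k)$. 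Expanding $\|z_{k+1}\|_2^2$ then yields three pieces: the drift $2\alpha_k z_k^T \A z_k$, which is at most $-2\alpha_k\gamma_2\|z_k\|_2^2$ by the contraction hypothesis~\eqref{eq:contractfactor} (applicable because $\iter_0 = \vec{0}$, $\iter^*\in\mathbb S$, and all iterates stay in $\mathbb S$, hence $z_k\in\mathbb S$); a second-order term $\alpha_k^2\|\A z_k + \epsilon_k\|_2^2$ bounded via $\upb$, $\|z_k\|_2$, and $1+\|\iter^*\|_2$; and the cross term $2\alpha_k z_k^T\epsilon_k$, which is the crux.

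The heart of the proof, and the step I expect to be the main obstacle, is bounding the expected cross term without paying a $\log k$ factor. Both $g_\A(y):=\A(y)-\A$ and $g_\vb(y):=\vb(y)-\vb$ have zero mean under $\pi$, so by the geometric mixing of $(Y_k)$ the Poisson solutions $\hat g_\A = \sum_{j\ge 0} P^j g_\A$ and $\hat g_\vb$ exist, are bounded in terms of $\upb$ and the mixing rate (this is where the constant $B$ enters), and satisfy $g_\bullet = \hat g_\bullet - P\hat g_\bullet$. Writing $P\hat g_\bullet(Y_k) = \mathbb{E}[\hat g_\bullet(Y_{k+1})\mid Y_k]$ and adding and subtracting $\hat g_\bullet(Y_{k+1})$, I would split $\epsilon_k$ into a martingale-difference part, which vanishes in expectation because $z_k$ and $\iter_k$ are measurable with respect to the history up to $Y_k$, and a telescoping part $\hat g_\bullet(Y_k) - \hat g_\bullet(Y_{k+1})$. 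Summing the weighted telescoping contribution $\sum_k \alpha_k z_k^T[\hat g(Y_k) - \hat g(Y_{k+1})]\iter_k$ by parts converts it into boundary terms plus increments of $\alpha_k z_k \iter_k^T$; since $z_{k+1}-z_k = O(\alpha_k)$, $\iter_{k+1}-\iter_k = O(\alpha_k)$, and the step-size increments are higher order, every resulting term carries an extra factor of $\alpha_k$. This is precisely what makes the Markov-noise bias a genuinely second-order $O(\alpha_k^2)$ effect and removes the multiplicative $\log k$ that a mixing-window bound à la \cite{srikant2019finite,zhang2021finite} would incur (the Poisson device here follows \cite{benveniste2012adaptive,kaledin2020finite,haque2023tight}). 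The delicate bookkeeping is keeping the Poisson-solution bounds explicit and showing, via Young's inequality, that these cross products can be absorbed into the negative drift $-\gamma_2\alpha_k\|z_k\|_2^2$ for $\alpha$ small enough.

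After this absorption I would obtain a one-step recursion of the form $\mathbb{E}\|z_{k+1}\|_2^2 \le \lrp{1 - \tfrac{\gamma_2\alpha_k}{2}}\mathbb{E}\|z_k\|_2^2 + C\alpha_k^2$, where $C$ collects $\upb^2$ and the $(1+\|\iter^*\|_2)^2$ factors appearing in $\psi_2$; the stated conditions $\alpha < \min\{2/\gamma_2, \ldots\}$ for part (a), and $h\ge\ldots$ together with $\alpha > 2/\gamma_2$ for part (b), are exactly what keep the effective contraction factor in $(0,1)$ and let the decay dominate the noise accumulation. Unrolling the constant-step recursion gives geometric decay $\psi_1(1-\gamma_2\alpha/2)^k$ of the initial error plus the two constant floors, the first arising from summing $\sum_j (1-\gamma_2\alpha/2)^j \alpha^2\upb^2 \approx \alpha\upb^2/\gamma_2$. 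For part (b), substituting $\alpha_k = \alpha/(k+h)$ and invoking a standard induction for recursions of type $u_{k+1} \le (1-\tfrac{a}{k+h})u_k + \tfrac{b}{(k+h)^2}$ yields the result: the requirement $\alpha\gamma_2/2 > 1$ forces the homogeneous part to decay like $(h/(k+h))^{\alpha\gamma_2/2}$ strictly faster than $1/(k+h)$, so the $O(1/(k+h))$ noise terms are rate-determining and give the claimed $O(1/k)$ MSE.
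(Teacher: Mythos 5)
Your proposal is correct and follows essentially the same route as the paper's proof: a Lyapunov drift argument on $\|\iter_k-\iter^*\|_2^2$, with the Markov-noise cross term split via the Poisson equation into a zero-mean martingale part and a telescoping part that is summed by parts so that its per-step contribution is $O(\alpha_k^2)$, then absorbed into the negative drift before unrolling the resulting recursion for constant and diminishing step sizes. The paper implements the summation by parts through the auxiliary sequence $d_k := (\iter_k-\iter^*)^T\mathbb{E}[V_{\iter_k}(Y_k)\mid Y_{k-1}]$ and a final rearrangement to control the boundary term $-2\alpha_k d_{k+1}$, which is exactly the bookkeeping your sketch anticipates.
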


\begin{proof}
Since $\iter_k \in \mathbb S$ and $\iter^* \in \mathbb S$, the vector $\iter_k - \iter^*$ lies in $\mathbb S$. Now, consider the following (for simplicity of notation, we use $\A_k := \A(Y_k)$ and $\vb_k := \vb(Y_k)$ in this proof): 
\begin{align*}
    \|\iter_{k+1}-\iter^*\|^2_2 - \|\iter_k - \iter^*\|^2_2   &= \|\iter_{k+1}-\iter_k + \iter_k - \iter^*\|^2_2 - \|\iter_k - \iter^*\|^2_2 \\
    &= \|\iter_{k+1}-\iter_k\|^2_2 + 2 (\iter_{k+1} - \iter_k)^T(\iter_k - \iter^*)\\
    &= \alpha^2_k\|\A_k \iter_k + \vb_k\|^2_2 + 2 \alpha_k (\iter_k - \iter^*)^T(\A_k\iter_{k} + \vb_k)\\
    &= \alpha^2_k\|\A_k \iter_k + \vb_k\|^2_2 + 2 \alpha_k (\iter_k - \iter^*)^T(\A_k\iter_{k} - A \iter_k + \vb_k - \vb)\\
        &\qquad\qquad\qquad+ 2 \alpha_k (\iter_k - \iter^*)^T(A\iter_k + \vb)\\
    &= \alpha^2_k\|\A_k \iter_k + \vb_k\|^2_2 + 2 \alpha_k (\iter_k - \iter^*)^T((\A_k- A) \iter_k + \vb_k - \vb)\\
        &\qquad\qquad\qquad + 2 \alpha_k (\iter_k - \iter^*)^TA(\iter_k - \iter^*)\\
    &\le \alpha^2_k\|\A_k \iter_k + \vb_k\|^2_2 + 2 \alpha_k (\iter_k - \iter^*)^T((\A_k- A) \iter_k + \vb_k - \vb) \tag{From~\eqref{eq:contractfactor}}\\
    &\qquad\qquad\qquad - \gamma_2  \alpha_k \|\iter_k - \iter^*\|^2_2. 
\end{align*}
 Next, recall that $\upb > 1$ is such that 
\[ \max\lrset{1,\|\A_k\|_2} \le \upb \qquad \text{ and }\qquad \max\lrset{1,\|\vb_k\|_2} \le \upb. \] 

Then, 
\begin{align*} 
\|\A_k \iter_k + \vb_k\|^2_2 \le ( \|\A_k \iter_k\|_2 + \|\vb_k\|_2 )^2 &\le \upb^2(\|\iter_k - \iter^*\|_2 +  1 + \|\iter^*\| )^2 \\ 
&\le 2 \upb^2\lrp{ \|\iter_k - \iter^*\|^2_2 + (1 + \|\iter^*\|_2)^2 }. 
\end{align*}

Using this, 
\begin{align*}
    \| \iter_{k+1} - \iter^* \|^2_2 - \|\iter_k - \iter^*\|^2_2 & \le \|\iter_k - \iter^*\|^2_2( -\gamma_2 \alpha_k +2\alpha^2_k \upb^2 ) + 2 \alpha^2_k\upb^2(1+\|\iter^*\|_2)^2\\
    &\qquad\qquad\qquad + 2 \alpha_k (\iter_k - \iter^*)^T((\A_k- A) \iter_k + \vb_k - \vb).\numberthis\label{eq:boundPE1}
\end{align*}

\vspace{0.75em}
\noindent{\bf Using Poisson equation. } Since the stationary mean of $\A_k = \A(Y_k)$ is $\A$, and that for  $\vb_k = \vb(Y_k)$ is $\vb$, for each state $s \in  \calS$, there exists a matrix $V_1(s)$ and a vector $V_2(s)$, that are solutions of the following Poisson equations:
\[ \A(s)-\A = V_1(s) - [PV_1](s), \quad \text{ and }\quad \vb-\vb(s) = V_2(s) - [PV_2](s). \]
Furthermore, for each $s\in\calS$, $V_x(s) := V_1(s) x - V_2(s)$ is a vector that solves 
\[ (\A(s) - \A) x + \vb(s) - \vb = V_x(s) - [PV_x](s).  \]
Moreover, there exists a constant $B:= \max\lrset{\max_s \|V_1(s)\|_2, \max_s \|V_2(s)\|_2}$ such that for each $s$ and $x$, 
$$\|V_x(s)\|_2 \le B(\|x\|_2+1),$$ 
and $V_x(s)$ is a Lipschitz function of $x$, i.e., 
\[ \|V_x(s) - V_y(s)\|_2 = \| V_1(s) (x - y) \|_2 \le \|V_1(s)\|_2 \|x-y\|_2 \le B\|x-y\|_2.  \]

Using these in the cross-term in~\eqref{eq:boundPE1},
\begin{align*}
    2 \alpha_k (\iter_k - \iter^*)^T((\A_k-\A)\iter_k + \vb_k - \vb) &=  2 \alpha_k (\iter_k - \iter^*)^T\lrp{(\A(Y_k) - \A)\iter_k + \vb(Y_k) - \vb}\\
    &= 2 \alpha_k (\iter_k - \iter^*)^T\lrp{V_{\iter_k}(Y_k) - \E{ V_{\iter_k}(Y_{k+1}) | Y_k }}\\
    &=2 \alpha_k (\iter_k - \iter^*)^T\big(V_{\iter_k}(Y_k) - \E{ V_{\iter_k}(Y_{k}) | Y_{k-1} } \\
        &\qquad+ \E{ V_{\iter_k}(Y_{k}) | Y_{k-1} }- \E{ V_{\iter_k}(Y_{k+1}) | Y_k }\big)\\
    &= 2 \alpha_k (\iter_k - \iter^*)^T\lrp{V_{\iter_k}(Y_k) - \E{ V_{\iter_k}(Y_{k}) | Y_{k-1} }} \\
        &\qquad + 2 \alpha_k (\iter_k - \iter^*)^T  \E{ V_{\iter_k}(Y_{k}) | Y_{k-1} }\\
        &\qquad - 2 \alpha_k (\iter_k - \iter^*)^T  \E{ V_{\iter_k}(Y_{k+1}) | Y_k }.
\end{align*}

Substituting back in~\eqref{eq:boundPE1}, 
\begin{align*}
    \| \iter_{k+1} - \iter^* \|^2_2 - \|\iter_k - \iter^*\|^2_2
    &\le \|\iter_k - \iter^*\|^2_2( -\gamma_2 \alpha_k +2\alpha^2_k \upb^2 ) + 2 \alpha^2_k\upb^2(1+\|\iter^*\|_2)^2 \\
    & \quad + \underbrace{ 2 \alpha_k (\iter_k - \iter^*)^T\lrp{V_{\iter_k}(Y_k) - \E{ V_{\iter_k}(Y_{k}) | Y_{k-1} }} }_{0-\text{mean term}}  \\
    &\quad+ \underbrace{ 2 \alpha_k (\iter_k - \iter^*)^T \lrp{ \E{ V_{\iter_k}(Y_{k}) | Y_{k-1} }- \E{ V_{\iter_k}(Y_{k+1}) | Y_k }}}_{T_1}. \numberthis\label{eq:boundPE2}
\end{align*}

Next, we show that $T_1$ is a small order. To this end, let 
\[ d_k := (\iter_k - \iter^*)^T \E{V_{\iter_k}(Y_k)| Y_{k-1}}. \]

Then, 
\begin{align*}
T_1 
&= 2\alpha_k d_k - 2 \alpha_k (\iter_k - \iter^*)^T \E{ V_{\iter_k}(Y_{k+1}) | Y_k} - 2\alpha_k d_{k+1} + 2\alpha_k d_{k+1}\\
&= 2\alpha_k (d_k-d_{k+1}) + 2\alpha_k (\iter_{k+1}-\iter^*)^T \E{V_{\iter_{k+1}}(Y_{k+1})|Y_k} \\
    &\qquad\qquad -  2\alpha_k(\iter_k - \iter^*)^T \E{ V_{\iter_k}(Y_{k+1}) | Y_k} \\
&= 2\alpha_k (d_k-d_{k+1}) + 2\alpha_k (\iter_{k+1}-\iter^*)^T \lrp{\E{V_{\iter_{k+1}}(Y_{k+1})|Y_k} - \E{V_{\iter_k}(Y_{k+1})| Y_{k}}} \\
&\qquad\qquad + 2\alpha_k(\iter_{k+1}-\iter_k )^T \E{ V_{\iter_k}(Y_{k+1}) | Y_k} \\
&\le 2\alpha_k (d_k-d_{k+1}) + 2\alpha_k \|\iter_{k+1}-\iter^*\|_2\E{\|V_{\iter_{k+1}}(Y_{k+1}) - V_{\iter_k}(Y_{k+1}) \|_2 | Y_{k}} \tag{Cauchy-Schwarz}\\
&\qquad\qquad + 2\alpha_k\|\iter_{k+1}-\iter_k \|_2 \E{ \|V_{\iter_k}(Y_{k+1}) \|_2| Y_k} \\
&\le 2\alpha_k (d_k-d_{k+1}) + 2B\alpha_k \|\iter_{k+1}-\iter^*\|_2\E{ \| \iter_{k+1}-\iter_k \|_2 | Y_{k}} \tag{Lipschitzness}\\
&\qquad\qquad + 2B\alpha_k\|\iter_{k+1}-\iter_k \|_2 \E{ (\|{\iter_k}\|_2 + 1) | Y_k}  \\
&\le 2\alpha_k (d_k-d_{k+1}) + 2B\alpha^2_k \|\iter_{k+1}-\iter_k\|_2\|\A_k \iter_{k} + \vb_k\|_2 \tag{Triangle inequality} \\ 
&\qquad\qquad + 2B\alpha^2_k \|\iter_{k}-\iter^*\|_2\|\A_k \iter_{k} + \vb_k \|_2 \\
&\qquad\qquad+ 2B\alpha^2_k\|\A_k\iter_k + \vb_k \|_2  \|{\iter_k} - \iter^*\|_2 + 2B\alpha^2_k\|\A_k\iter_k + \vb_k \|_2(\|\iter^*\|_2+1)  \\
&= 2\alpha_k (d_k-d_{k+1}) \!+\! 2B\alpha^3_k \|\A_k \iter_{k} \!+\! \vb_k\|^2_2 \!+\! 4B\alpha^2_k \|\iter_{k}-\iter^*\|_2\|\A_k \iter_{k} + \vb_k \|_2 \\
    &\qquad\qquad+ 2B\alpha^2_k\|\A_k\iter_k + \vb_k \|_2(1+\|\iter^*\|_2).
\end{align*}

Next, observe that 
\[ \|\A_k \iter_k + \vb_k\|^2_2 \le 2 \upb^2 (1+\|\iter^*\|_2)^2 + 2 \upb^2 \|\iter_k - \iter^*\|^2_2. \]

Using this, as well as the inequality 
\[ 2ab \le a^2 + b^2 \]
to further bound the cross terms in $T_1$, we get

\begin{align*}
    T_1 
    &\le  2\alpha_k (d_k-d_{k+1}) + 4B\upb^2\alpha^3_k (1+\|\iter^*\|_2)^2 + 4B\upb^2\alpha^3_k\|\iter_k - \iter^*\|^2_2\\
        & \qquad  + 2 B \alpha^2_k \|\iter_k - \iter^*\|^2_2 + 4 B \alpha^2_k \upb^2 (1+\|\iter^*\|_2)^2 + 4 B \alpha^2_k \upb^2 \|\iter_k -\iter^*\|^2_2 \\
        &\qquad + B \alpha^2_k (1+\|\iter^*\|_2)^2 + 2 B \alpha^2_k \upb^2 (1+\|\iter^*\|_2)^2 + 2 B \alpha^2_k \upb^2 \|\iter_k - \iter^*\|^2_2\\
    &=  2\alpha_k (d_k-d_{k+1}) + 4B\upb^2\alpha^3_k (1+\|\iter^*\|_2)^2 + 4B\upb^2\alpha^3_k\|\iter_k - \iter^*\|^2_2\\
        & \qquad  + 2 B \alpha^2_k \|\iter_k - \iter^*\|^2_2 + 6 B \alpha^2_k \upb^2 (1+\|\iter^*\|_2)^2 + 6 B \alpha^2_k \upb^2 \|\iter_k -\iter^*\|^2_2 \\
        &\qquad + B \alpha^2_k (1+\|\iter^*\|_2)^2 \\ 
    &\le 2\alpha_k (d_k-d_{k+1}) + 4B\upb^2\alpha^3_k (1+\|\iter^*\|_2)^2 + 4B\upb^2\alpha^3_k\|\iter_k - \iter^*\|^2_2\\
        & \qquad  + 8 B \alpha^2_k \upb^2 \|\iter_k - \iter^*\|^2_2 + 8 B \alpha^2_k \upb^2 (1+\|\iter^*\|_2)^2 \tag{Since $\upb \ge 1$}\\ 
    &\le 2\alpha_k (d_k-d_{k+1}) + 12 B\upb^2\alpha^2_k (1+\|\iter^*\|_2)^2 + 12 B\upb^2\alpha^2_k\|\iter_k - \iter^*\|^2_2\tag{Since $\alpha^3_k \le \alpha^2_k$}.
\end{align*}

Substituting this bound on $T_1$ back in~\eqref{eq:boundPE2}, we have
\begin{align*}
    &\| \iter_{k+1} - \iter^* \|^2_2 - \|\iter_k - \iter^*\|^2_2 \\
    &\le \|\iter_k - \iter^*\|^2_2( -\gamma_2 \alpha_k +2\alpha^2_k \upb^2) + 2 \alpha^2_k \upb^2(1+\|\iter^*\|_2)^2 \\
    &\qquad+ 2 \alpha_k (\iter_k - \iter^*)^T\lrp{V_{\iter_k}(Y_k) - \E{ V_{\iter_k}(Y_{k}) | Y_{k-1} }}\\
    & \qquad + \underbrace{2\alpha_k (d_k-d_{k+1}) }_{T_2} + 12 B \alpha^2_k \upb^2 \|\iter_k - \iter^*\|^2_2 + 12 B \alpha^2_k \upb^2 (1+ \|\iter^*\|_2)^2 \tag{Since $B \ge 1$} \\
    &\le \|\iter_k - \iter^*\|^2_2( -\gamma_2 \alpha_k + 14 B \alpha^2_k \upb^2) + 14 B \alpha^2_k \upb^2(1+\|\iter^*\|_2)^2 \\
    &\qquad+ 2 \alpha_k (\iter_k - \iter^*)^T\lrp{V_{\iter_k}(Y_k) - \E{ V_{\iter_k}(Y_{k}) | Y_{k-1} }}\\
    & \qquad + \underbrace{2\alpha_k (d_k-d_{k+1}) }_{T_2}.\numberthis\label{eq:boundPE3}
\end{align*}

Now, observe that 
\begin{align*}
    |d_k| 
    &= \abs{ (\iter_k - \iter^*)^T \E{V_{\iter_k}(Y_k)| Y_{k-1}}} \\
    &\le \|\iter_k - \iter^*\|_2 \E{\|V_{\iter_k}(Y_k)\|_2| Y_{k-1}} \tag{Cauchy-Schwarz and Jensen's}\\
    &\le B\|\iter_k - \iter^*\|_2 (\|\iter_k - \iter^*\|_2 + 1 + \|\iter^*\|_2) \tag{Lipschitzness}\\
    &= B\|\iter_k - \iter^*\|^2_2  + B \|\iter_k - \iter^*\|_2 (1+\|\iter^*\|_2)\numberthis\label{eq:bounddk1}\\
    &\le 2B\|\iter_k - \iter^*\|^2_2 + 2B(1+\|\iter^*\|_2)^2, \numberthis\label{eq:bounddk}
\end{align*}
where, for the last inequality, we used $2ab \le a^2 + b^2$.

Let us now get an upper bound on $T_2$ involving terms $2\alpha_{k-1} d_k - 2\alpha_k d_{k+1}$, instead. 
\begin{align*}
    T_2 
    &= 2 \alpha_k (d_k - d_{k+1})\\
    &= 2 \alpha_{k-1} d_k - 2 \alpha_k d_{k+1} + 2 (\alpha_k - \alpha_{k-1})d_k\\
    &\le 2 \alpha_{k-1}d_k - 2\alpha_k d_{k+1} + 2 |\alpha_k - \alpha_{k-1}| |d_k|\\
    &\le 2 \alpha_{k-1} d_k - 2\alpha_k d_{k+1} + \frac{6\alpha^2_k}{\alpha} |d_k| \tag{From Lemma~\ref{lem:diffinstepsize}}\\
    &= 2\alpha_{k-1}\lrp{1-\frac{\alpha_k\gamma_2}{2}} d_k - 2\alpha_k d_{k+1} + \gamma_2\alpha_k\alpha_{k-1}  d_k + \frac{6\alpha^2_k}{\alpha} |d_k|\\ 
    &\le 2\alpha_{k-1}\lrp{1-\frac{\alpha_k\gamma_2}{2}} d_k - 2\alpha_k d_{k+1} + 2\gamma_2\alpha^2_k|d_k| + \frac{6\alpha^2_k}{\alpha} |d_k|\tag{From  Lemma~\ref{lem:diffalpha}}\\
    &\le 2\alpha_{k-1}\lrp{1-\frac{\alpha_k\gamma_2}{2}} d_k - 2\alpha_k d_{k+1} \\
    &\qquad\qquad+ 4B\alpha^2_k\lrp{ \gamma_2+\frac{3}{\alpha}}\|\iter_k - \iter^*\|^2_2  + 4B\alpha^2_k\lrp{\gamma_2 + \frac{3}{\alpha}} (1+\|\iter^*\|_2)^2.\tag{Using~\eqref{eq:bounddk}}
\end{align*}

\begin{remark}\label{rem:const}
    Note that a more careful bound for constant step size setting would ensure that the term involving $\frac{3}{\alpha}$ is absent. This corresponds to the bound from Lemma~\ref{lem:diffinstepsize}, which is $0$ in that setting.
\end{remark}

Substituting this back in~\eqref{eq:boundPE3}, and noting that $2\alpha_k \le 2$, we have 
\begin{align*}
    \| \iter_{k+1} - \iter^* \|^2_2 - \|\iter_k - \iter^*\|^2_2 &\le \|\iter_k - \iter^*\|^2_2\lrp{-\gamma_2 \alpha_k + \alpha^2_k\lrp{14B\upb^2 + 4B\lrp{\gamma_2 + \frac{3}{\alpha}}}} \\
    &+ \alpha^2_k(1+\|\iter^*\|_2)^2 \lrp{ 14\upb^2 B + 4 B \lrp{\gamma_2 + \frac{3}{\alpha}} }\\
    &+ 2 \alpha_k (\iter_k - \iter^*)^T (V_{\iter_k}(Y_k) - \E{V_{\iter_k}(Y_k) | Y_{k-1}})\\
    &+ 2 \alpha_{k-1} \lrp{1-\frac{\alpha_k \gamma_2}{2}} d_k - 2\alpha_k d_{k+1}.
\end{align*}

Taking conditional expectation, conditioned on randomness till time $k$, 
\begin{align*}
    \E{\| \iter_{k+1} - \iter^* \|^2_2 | \iter_k, Y_{k-1}} &\le \|\iter_k - \iter^*\|^2_2\lrp{1 -\gamma_2 \alpha_k + \alpha^2_k(14B\upb^2 + 4B\lrp{\gamma_2 + \frac{3}{\alpha}})} \\
    &+ \alpha^2_k(1+\|\iter^*\|_2)^2 \lrp{ 14\upb^2 B + 4 B \lrp{\gamma_2 + \frac{3}{\alpha}} }\\
    &+ 2 \alpha_{k-1} \lrp{1-\frac{\alpha_k \gamma_2}{2}} d_k - 2\alpha_k d_{k+1},
\end{align*}
which further gives 
\begin{align*}
    \E{\| \iter_{k+1} - \iter^* \|^2_2 } &\le \E{\|\iter_k - \iter^*\|^2_2}\lrp{1 -\gamma_2 \alpha_k + \alpha^2_k\lrp{14B\upb^2 + 4B\lrp{\gamma_2 + \frac{3}{\alpha}}}} \\
    & \qquad + \alpha^2_k(1+\|\iter^*\|_2)^2 \lrp{ 14\upb^2 B + 4 B \lrp{\gamma_2 + \frac{3}{\alpha}} }\\
    &\qquad + 2 \alpha_{k-1} \lrp{1-\frac{\alpha_k \gamma_2}{2}} \E{d_k} - 2\alpha_k \E{d_{k+1}}\\
    &\le \E{\|\iter_k - \iter^*\|^2_2}\lrp{1 -\gamma_2 \alpha_k + 14B\alpha^2_k\lrp{\upb^2 + \gamma_2 + \frac{3}{\alpha}}} \\
    &\qquad + 14B\alpha^2_k(1+\|\iter^*\|_2)^2 \lrp{ \upb^2 + \gamma_2 + \frac{3}{\alpha}} \\
    & \qquad + 2 \alpha_{k-1} \lrp{1-\frac{\alpha_k \gamma_2}{2}} \E{d_k} - 2\alpha_k \E{d_{k+1}}.
\end{align*}

Choose $\alpha_k$ small enough so that 
\[14B\alpha_k\lrp{\upb^2 + \gamma_2 + \frac{3}{\alpha}} \le \frac{\gamma_2}{2}, \tag{Condition 1}\label{eq:condition1}\]
so that 
\begin{align*}
    \E{\| \iter_{k+1} - \iter^* \|^2_2}&\le \E{\|\iter_k - \iter^*\|^2_2}\lrp{1 -\frac{\gamma_2 \alpha_k}{2}} \\
    &\qquad + 2\alpha_{k-1} \lrp{1-\frac{\alpha_k \gamma_2}{2}}\E{d_k}-2\alpha_k\E{d_{k+1}}\\
    &\qquad + 14B\alpha^2_k(1+\|\iter^*\|_2)^2\lrp{\upb^2 + \gamma_2 + \frac{3}{\alpha}}.
\end{align*}
Let 
\[ B_1 := 14B(1+\|\iter^*\|_2)^2\lrp{\upb^2 + \gamma_2 + \frac{3}{\alpha}}.\tag{Constant $B_1$}\label{eq:constantB1}\]
Then, the above recursion becomes 
\begin{align*}
    \E{\| \iter_{k+1} - \iter^* \|^2_2}&\le \E{\|\iter_k - \iter^*\|^2_2}\lrp{1 -\frac{\alpha_k\gamma_2 }{2}} \\ 
        &\qquad \qquad + 2\alpha_{k-1} \lrp{1-\frac{\alpha_k \gamma_2}{2}}\E{d_k}-2\alpha_k\E{d_{k+1}} + \alpha^2_k B_1. 
\end{align*}

On opening the recursion above, we get
\begin{align*}
    \E{\| \iter_{k+1} - \iter^* \|^2_2}
    &\le \lrp{1 -\frac{\alpha_k\gamma_2 }{2}}\lrp{1 -\frac{\alpha_{k-1}\gamma_2 }{2}}\E{\|\iter_{k-1} - \iter^*\|^2_2} \\ 
    &\qquad +B_1\lrp{\alpha^2_k + \lrp{1 -\frac{\alpha_k\gamma_2 }{2}} \alpha^2_{k-1}}\\
    &\qquad + 2\alpha_{k-2}\lrp{1 -\frac{\alpha_k\gamma_2 }{2}} \lrp{1-\frac{\alpha_{k-1} \gamma_2}{2}}\E{d_{k-1}}-2\alpha_k\E{d_{k+1}}\\
    &\le \E{\|\iter_{0} - \iter^*\|^2_2}\prod\limits_{i=0}^{k}\lrp{1 -\frac{\alpha_i\gamma_2 }{2}}+B_1\sum\limits_{i=0}^k \alpha^2_i \prod\limits_{j=i+1}^k \lrp{1 -\frac{\alpha_j\gamma_2 }{2}}\\
    &\qquad + 2\alpha_{-1}\prod\limits_{i=0}^k\lrp{1 -\frac{\alpha_i\gamma_2 }{2}} \E{d_{0}}-2\alpha_k\E{d_{k+1}}.
\end{align*}
Next, upper-bounding the last term, 
\begin{align*}
    \E{\| \iter_{k+1} - \iter^* \|^2_2} &\le \E{\|\iter_{0} - \iter^*\|^2_2}\prod\limits_{i=0}^{k}\lrp{1 -\frac{\alpha_i\gamma_2 }{2}}+B_1\sum\limits_{i=0}^k \alpha^2_i \prod\limits_{j=i+1}^k \lrp{1 -\frac{\alpha_j\gamma_2 }{2}}\\
    &\qquad + 2\alpha_{-1}\prod\limits_{i=0}^k\lrp{1 -\frac{\alpha_i\gamma_2 }{2}} \E{d_{0}}+2\alpha_k\E{|d_{k+1}|}\\
    &\le \E{\|\iter_{0} - \iter^*\|^2_2}\prod\limits_{i=0}^{k}\lrp{1 -\frac{\alpha_i\gamma_2 }{2}}+B_1\sum\limits_{i=0}^k \alpha^2_i \prod\limits_{j=i+1}^k \lrp{1 -\frac{\alpha_j\gamma_2 }{2}}\tag{From~\eqref{eq:bounddk}}\\
    &~~~+ 2\alpha_{-1}\prod\limits_{i=0}^k\lrp{1 -\frac{\alpha_i\gamma_2 }{2}} \E{d_{0}}+ 4\alpha_k B \E{\|\iter_{k+1}-\iter^*\|^2_2} \\ 
    &\qquad + 4 \alpha_k B (1+\|\iter^*\|_2)^2.
\end{align*}

On rearranging, we get
\begin{align*}
    \E{\| \iter_{k+1} - \iter^* \|^2_2} &\lrp{1-4\alpha_kB }\\
    &\le \E{\|\iter_{0} - \iter^*\|^2_2}\prod\limits_{i=0}^{k}\lrp{1 -\frac{\alpha_i\gamma_2 }{2}}+B_1\sum\limits_{i=0}^k \alpha^2_i \prod\limits_{j=i+1}^k \lrp{1 -\frac{\alpha_j\gamma_2 }{2}}\\
    &\qquad+ 2\alpha_{-1}\prod\limits_{i=0}^k\lrp{1 -\frac{\alpha_i\gamma_2 }{2}} \E{d_{0}}+ 4B\alpha_k(1+\|\iter^*\|_2)^2\\
    &\le \E{\|\iter_{0} - \iter^*\|^2_2}\prod\limits_{i=0}^{k}\lrp{1 -\frac{\alpha_i\gamma_2 }{2}}+B_1\sum\limits_{i=0}^k \alpha^2_i \prod\limits_{j=i+1}^k \lrp{1 -\frac{\alpha_j\gamma_2 }{2}}\\
    &\qquad+ 4\alpha_{-1}B(1+\|\iter^*\|_2)^2\prod\limits_{i=0}^k\lrp{1 -\frac{\alpha_i\gamma_2 }{2}} + 4B\alpha_k(1+\|\iter^*\|_2)^2 \tag{Using~\eqref{eq:bounddk1}}. 
\end{align*}

Choosing $\alpha_k$ small enough so that 
\[ 1-4\alpha_kB \ge \frac{1}{2}, \tag{Condition 2}\label{eq:condition2} \]
we get
\begin{align*}
    \E{\| \iter_{k+1} - \iter^* \|^2_2}&\le 2\E{\|\iter_{0} - \iter^*\|^2_2}\prod\limits_{i=0}^{k}\lrp{1 -\frac{\alpha_i\gamma_2 }{2}}+2B_1\sum\limits_{j=0}^k \alpha^2_j \prod\limits_{i=j+1}^k \lrp{1 -\frac{\alpha_i\gamma_2 }{2}}\\
    &\qquad\qquad+ 8\alpha_{-1}B(1+\|\iter^*\|_2)^2\prod\limits_{i=0}^k\lrp{1 -\frac{\alpha_i\gamma_2 }{2}} + 8B\alpha_k(1+\|\iter^*\|_2)^2. \numberthis\label{eq:finalrecursion}
\end{align*}

We now make the above bound more explicit in two cases: diminishing step size and constant step size. 

\subsection*{Diminishing Step Size}
In this subsection, we consider step sizes of the form $\alpha_k = \frac{\alpha}{k+h}$, for appropriate choices for $\alpha$ and $h\ge 2$. Let us bound each term in the  recursion in~\eqref{eq:finalrecursion} separately. First observe that for any $i_0\ge 0$,
\begin{align*}
    \prod\limits_{j=i_0}^k\lrp{ 1-\frac{\gamma_2\alpha_j}{2} } 
    &= \prod\limits_{j=i_0}^k\lrp{ 1-\frac{\gamma_2\alpha}{2(j+h)} }\\
    &\le \prod\limits_{j=i_0}^k e^{ -\frac{\gamma_2\alpha}{2(j+h)} }\\
    &=  e^{ -\frac{\gamma_2\alpha}{2}\sum\limits_{j=i_0}^k\frac{1}{j+h}}\\
    &\le e^{ -\frac{\gamma_2\alpha}{2}\ln\frac{k+h}{i_0+ h}}\\
    &= \lrp{\frac{i_0+h}{k+h}}^{\frac{\alpha\gamma_2}{2}}. \numberthis\label{eq:boundonprodofnegdrift}
\end{align*}

Using this in~\eqref{eq:finalrecursion} with $i_0 = 0$, we get
\begin{align*}
    \E{\| \iter_{k+1} - \iter^* \|^2_2}&\le 2\E{\|\iter_{0} - \iter^*\|^2_2}\lrp{\frac{h}{k+h}}^\frac{\alpha\gamma_2}{2}+\underbrace{2B_1\sum\limits_{j=0}^k \alpha^2_j \prod\limits_{i=j+1}^k \lrp{1 -\frac{\alpha_i\gamma_2 }{2}}}_{:=T^{(d)}_1}\\
    &\qquad\qquad+ 8\alpha_{-1}B(1+\|\iter^*\|_2)^2 \lrp{\frac{h}{k+h}}^{\frac{\alpha\gamma_2}{2}} + 8B\alpha_k(1+\|\iter^*\|_2)^2. 
\end{align*}

Let us now bound $T^{(d)}_1$. 
\begin{align*}
    T^{(d)}_1 
    &= 2B_1 \sum\limits_{j=0}^k \prod_{i=j+1}^k \lrp{ 1-\frac{\gamma_2 \alpha_i}{2} }\alpha^2_j \\
    &\le 2B_1 \sum\limits_{j=0}^k \lrp{\frac{j+1+h}{k+h}}^{\frac{\alpha\gamma_2}{2}}\frac{\alpha^2}{(j+h)^2} \tag{From~\eqref{eq:boundonprodofnegdrift}}\\
    &= \frac{2B_1\alpha^2}{(k+h)^{\frac{\alpha\gamma_2}{2}}} \sum\limits_{j=0}^k \lrp{\frac{j+1+h}{j+h}}^{2}\lrp{j+1+h}^{\frac{\alpha\gamma_2}{2}-2}\\
    &\le \frac{8B_1\alpha^2}{(k+h)^{\frac{\alpha\gamma_2}{2}}} \sum\limits_{j=0}^k \lrp{j+1+h}^{\frac{\alpha\gamma_2}{2}-2}. \tag{Since $h\ge 2$}
\end{align*}

Choose $\alpha$ large enough so that
\begin{equation} \frac{\alpha\gamma_2}{2} > 1, \label{eq:condition3}\tag{Condition 3}
\end{equation}
and hence, 
\begin{align*}
    \sum\limits_{j=0}^k (j+1+h)^{\frac{\gamma_2\alpha}{2}-2}&\le\int\limits_{0}^{k+1} (x+1+h)^{\frac{\gamma_2\alpha}{2}-2}dx\\
    &=\frac{1}{\frac{\gamma_2\alpha}{2}-1} \lrp{ (k+2+h)^{\frac{\gamma_2\alpha}{2}-1} - (1+h)^{\frac{\gamma_2\alpha}{2} - 1}}\\
    &\le \frac{2(k+2+h)^{\frac{\gamma_2\alpha}{2}-1}}{\gamma_2\alpha-2}.
\end{align*}

Using this,
\begin{align*} 
    T^{(d)}_1 
    &\le \lrp{\frac{8B_1\alpha^2}{(k+h)^{\frac{\alpha\gamma_2}{2}}}}\frac{2(k+2+h)^{\frac{\gamma_2\alpha}{2}-1}}{\gamma_2\alpha-2}\\
    &= \frac{16B_1\alpha^2}{\gamma_2\alpha-2} \lrp{\frac{1}{k+2+h}}\lrp{\frac{k+2+h}{k+h}}^{\frac{\alpha\gamma_2}{2}}\\
    &\le \frac{16B_1\alpha^2}{\gamma_2\alpha-2} \lrp{\frac{e^{\frac{\alpha\gamma_2}{k+h}}}{k+2+h}} \tag{$1+x \le e^x$}\\
    &= \frac{16B_1\alpha^2}{\gamma_2\alpha-2} \lrp{\frac{e^{\alpha_k\gamma_2}}{k+2+h}}.
\end{align*}
Choosing $h$ large enough to ensure 
\[\alpha_k\gamma_2 \le 2, \label{eq:condition4}\tag{Condition 4} \] 
we have
\[ T^{(d)}_1 \le \frac{16B_1\alpha^2e^2}{(\gamma_2\alpha -2)(k+h+1)}.\]

Using this bound, the MSE bound becomes: 
\begin{align*}
    \E{\| \iter_{k+1} - \iter^* \|^2_2}&\le 2\E{\|\iter_{0} - \iter^*\|^2_2}\lrp{\frac{h}{k+h}}^\frac{\alpha\gamma_2}{2}+\frac{16B_1\alpha^2e^2}{(\gamma_2\alpha -2)(k+h+1)}\\
        &\qquad\qquad+ 8\alpha_{-1}B(1+\|\iter^*\|_2)^2 \lrp{\frac{h}{k+h}}^{\frac{\alpha\gamma_2}{2}} + 8B\alpha_k(1+\|\iter^*\|_2)^2\\
    &\le 2(1+\|\iter^*\|_2)^2\lrp{1+4\alpha_{-1}B}\lrp{\frac{h}{k+h}}^\frac{\alpha\gamma_2}{2} \\
        &\qquad\qquad+\frac{16B_1\alpha^2e^2}{(\gamma_2\alpha -2)(k+h)} +  8B\alpha_{k}(1+\|\iter^*\|_2)^2\\
    &= 2(1+\|\iter^*\|_2)^2\lrp{1+4\alpha_{-1}B}\lrp{\frac{h}{k+h}}^\frac{\alpha\gamma_2}{2} \numberthis\label{eq:semifbounddiminishing} \\
        &\qquad\qquad+ \frac{2\alpha}{k+h} \lrp{\frac{8B_1\alpha e^2}{(\gamma_2\alpha -2)} +   4B(1+\|\iter^*\|_2)^2}.
\end{align*}

\vspace{0.75em}
\noindent{\bf Conditions on step size parameters. } Recall that $h\ge 2$. In addition, ~\eqref{eq:condition1} is satisfied if 
\[ h \ge 1 + 28 B\lrp{\frac{\upb^2\alpha}{\gamma_2} + \alpha + \frac{3}{\gamma_2} }, \]
which also automatically satisfies~\eqref{eq:condition2}.~\eqref{eq:condition4} is  satisfied for 
\[h\ge 1 + \frac{\alpha \gamma_2}{2}.\] 
Moreover, ~\eqref{eq:condition3} requires that 
\[ \alpha > \frac{2}{\gamma_2}. \]

\vspace{0.75em}
\noindent{\bf Final conditions. } Thus, 
\[ h \ge \max\lrset{2, 1 + \frac{\alpha\gamma_2}{2},
1 + 28 B\lrp{ \frac{\upb^2\alpha}{\gamma_2} + \alpha + \frac{1}{\gamma_2} }},\quad\text{and}\quad \alpha > \frac{2}{\gamma_2}. \numberthis\label{eq:conditions_dimss}\]
Recall from~\ref{eq:constantB1},
\begin{align*}
    B_1 = 14B(1+\|\iter^*\|_2)^2\lrp{\upb^2 + \gamma_2 + \frac{3}{\alpha}}
\end{align*}

\vspace{0.75em}
\noindent{\bf Final bound. }Using this in~\eqref{eq:semifbounddiminishing}, we get
\begin{align*}
    \E{ \|\iter_{k+1} - \iter^*\|^2_2 } &\le 2 (1+\|\iter^*\|_2)^2(1+4\alpha_{-1}B)\lrp{\frac{h}{k+h}}^{\frac{\alpha\gamma_2}{2}} \\
    &\qquad + \frac{8B\alpha(1+\|\iter^*\|_2)^2}{k+h}\lrp{ \frac{28 \alpha e^2}{\gamma_2 \alpha - 2}\lrp{\upb^2 + \gamma_2 + \frac{3}{\alpha}} + 1 }\\
    &\le 2 (1+\|\iter^*\|_2)^2(1+4\alpha_{-1}B)\lrp{\frac{h}{k+h}}^{\frac{\alpha\gamma_2}{2}} \\
    &\qquad + \frac{112B\alpha(1+\|\iter^*\|_2)^2}{k+h}\lrp{ \frac{2 \alpha e^2}{\gamma_2 \alpha - 2}\lrp{\upb^2 + \gamma_2 + \frac{3}{\alpha}} + 1 }.
\end{align*}

Further, from~\eqref{eq:condition2}, we have for all $k$, 
\[ \alpha_{-1} \le \frac{1}{8B}.  \]

Hence, the bound becomes
\begin{align*}
    \E{ \|\iter_{k+1} - \iter^*\|^2_2 } & \le 3 (1+\|\iter^*\|_2)^2\lrp{\frac{h}{k+h}}^{\frac{\alpha\gamma_2}{2}} \\
    &\qquad + \frac{112B\alpha(1+\|\iter^*\|_2)^2}{k+h}\lrp{ \frac{2 \alpha e^2}{\gamma_2 \alpha - 2}\lrp{\upb^2 + \gamma_2 + \frac{3}{\alpha}} + 1 }.
\end{align*}

Now, finally, since $\alpha > \frac{2}{\gamma_2}$, we have $\frac{1}{\alpha} < \frac{\gamma_2}{2}$. Using this in the last term above, 
\begin{align*}
    \E{ \|\iter_{k+1} - \iter^*\|^2_2 } & \le 3 (1+\|\iter^*\|_2)^2\lrp{\frac{h}{k+h}}^{\frac{\alpha\gamma_2}{2}} \\
    &\qquad + \frac{112B\alpha(1+\|\iter^*\|_2)^2}{k+h}\lrp{ \frac{2 \alpha e^2}{\gamma_2 \alpha - 2}\lrp{\upb^2 + \frac{5\gamma_2}{2}} + 1 }\\
    &\le 3 (1+\|\iter^*\|_2)^2\lrp{\frac{h}{k+h}}^{\frac{\alpha\gamma_2}{2}} \\
    &\qquad + \frac{112B\alpha(1+\|\iter^*\|_2)^2}{k+h}\lrp{ \frac{5 \alpha e^2 \upb^2}{\gamma_2 \alpha - 2}\lrp{1 + \gamma_2} + 1 }
\end{align*}

\subsection*{Constant Step Size}
In this section, we bound the recursion in~\eqref{eq:finalrecursion} for the setting of constant step size, i.e., $\alpha_k = \alpha$. In this setting, the recursion in~\eqref{eq:finalrecursion} becomes

\begin{align*}
    \E{\| \iter_{k+1} - \iter^* \|^2_2}&\le 2\|\iter^*\|^2_2\lrp{1 -\frac{\alpha\gamma_2 }{2}}^{k+1}+2B_1\alpha^2\sum\limits_{j=0}^k  \lrp{1 -\frac{\alpha\gamma_2 }{2}}^{j}\\
    &\qquad\qquad+ 8\alpha_{-1} B(1+\|\iter^*\|_2)^2\lrp{1 -\frac{\alpha\gamma_2 }{2}}^{k+1} + 8B\alpha(1+\|\iter^*\|_2)^2\\
    &=2(1+\|\iter^*\|_2)^2\lrp{1+4\alpha_{-1} B}\lrp{1 -\frac{\alpha\gamma_2 }{2}}^{k+1} \numberthis\label{eq:semifconstant} \\
        &\qquad\qquad + 4\alpha\lrp{\frac{B_1}{\gamma_2}+   2B(1+\|\iter^*\|_2)^2}.
\end{align*}

\vspace{0.75em}
\noindent{\bf Conditions on step size. } Using the observation in Remark~\ref{rem:const}, and choosing 
\[ \alpha \le \frac{1}{28B\lrp{1+\frac{\upb^2}{\gamma_2}}}, \]
ensures~\eqref{eq:condition1}, which also automatically satisfies~\eqref{eq:condition2}. Moreover, $\alpha$ also needs to satisfy 
\[ \alpha  < \frac{2}{\gamma_2}. \]

\vspace{0.75em}
\noindent{\bf Final conditions. } Thus, 
\[ \alpha < \min\lrset{ \frac{2}{\gamma_2}, \frac{1}{28B\lrp{1+\frac{\upb^2}{\gamma_2}}} }. \]

Under these conditions, and accounting for the observation from Remark~\ref{rem:const}, recall from~\eqref{eq:constantB1} that
\begin{align*}
    B_1 := 14B\gamma_2(1+\|\iter^*\|_2)^2\lrp{1+\frac{\upb^2}{\gamma_2}}.
\end{align*}

\vspace{0.75em}
\noindent{\bf Final bound. }Substituting this in~\eqref{eq:semifconstant}, along with~\eqref{eq:condition2}, which ensures that $\alpha \le \frac{1}{8B}$, we have
\begin{align*}
    \E{\| \iter_{k+1} - \iter^* \|^2_2} &\le 2(1+\|\iter^*\|_2)^2\lrp{1+4\alpha B}\lrp{1 -\frac{\alpha\gamma_2 }{2}}^{k+1} \\
    &\qquad \qquad + 56B\alpha(1+\|\iter^*\|_2)^2\lrp{\frac{\upb^2}{\gamma_2} + 2}\\
    &\le 3(1+\|\iter^*\|_2)^2\lrp{1 -\frac{\alpha\gamma_2 }{2}}^{k+1} + 112B\alpha(1+\|\iter^*\|_2)^2\lrp{\frac{\upb^2}{\gamma_2} + 1}.
\end{align*}
\end{proof}

\section{Proofs for Results in Section~\ref{sec:var}}\label{app:sec3}

\subsection{Proof of Proposition~\ref{prop:var}}\label{app:proof_prop:var}
From Lemma~\ref{lem:asympvar_pi0}, we get that the asymptotic variance $\kappa(f)$ is constant independent of the starting state or starting distribution, and is given by the expression in~\eqref{eq:kmu0}. Further, from Lemma~\ref{lem:asymp_var}, we get the first two formulations for $\kappa(f)$ (Equations~\eqref{eq:kmu1} and~\eqref{eq:kmu2}). Finally, the third formulation follows by replacing $(f(X) - \bar{f})$ terms in~\eqref{eq:kmu2} by $V(X) - P V (X)$, which follows from the Poisson equation~\eqref{eq:PE}.\qed

\subsection{Variance of Functions of a Discrete Time Markov Chain (DTMC)}\label{app:var_dtmc}
    Recall that $X = (X_k: k \ge 0)$ is an irreducible and aperiodic DTMC on a finite-state space $\calS$ with transition probability matrix  $P$ and a unique stationary distribution $\pi$. Let $f: \calS \rightarrow \R$ be any function, and let $\bar{f} := \sum_{x\in\calS} \pi(x) f(x)$ denote its stationary expectation. Let $V$ denote the solution to the Poisson equation for $f-\bar{f}$, i.e., for each $x\in\calS$, $V$ satisfies 
        $$V(x) - \sum\nolimits_{x'\in\calS}P(x,x')V(x') = f(x)-\bar{f}.$$

\begin{lemma}\label{lem:asympvar_pi0}
    Consider the Markov Chain starting in distribution $\pi_0$. The asymptotic variance of $f$, $\kappa(f)$, is a constant independent of the starting distribution of the Markov Chain $\pi_0$, i.e., 
    \begin{align*}
        \kappa(f) := \lim\limits_{n\rightarrow\infty}\frac{1}{n} ~ \V\lrs{ \sum\limits_{k=0}^{n-1} f(X_k) | X_0 \sim \pi_0 } = \lim\limits_{n\rightarrow\infty}\frac{1}{n} ~ \V\lrs{ \sum\limits_{k=0}^{n-1} f(X_k) | X_0 \sim \pi}.
    \end{align*}
\end{lemma}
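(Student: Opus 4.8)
The plan is to reduce the variance to that of a martingale via the Poisson equation, and then invoke geometric ergodicity to show that the per-step contribution forgets the initial distribution. Write $g := f - \bar{f}$, so that $\sum_{k=0}^{n-1} f(X_k) - n\bar{f} = \sum_{k=0}^{n-1} g(X_k)$ and the variance is unaffected by the deterministic shift. Using the Poisson equation $V - PV = g$, I would decompose each term as $g(X_k) = M_{k+1} + V(X_k) - V(X_{k+1})$, where $M_{k+1} := V(X_{k+1}) - (PV)(X_k)$. Summing telescopes the last two terms, giving $\sum_{k=0}^{n-1} g(X_k) = A_n + V(X_0) - V(X_n)$ with $A_n := \sum_{k=0}^{n-1} M_{k+1}$.

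First I would observe that $\{M_{k+1}\}$ is a martingale-difference sequence with respect to the filtration $\mathcal F_k := \sigma(X_0, \dots, X_k)$, since $\E{V(X_{k+1}) \mid \mathcal F_k} = (PV)(X_k)$. Consequently the increments are mean-zero and pairwise uncorrelated, so $\V\lrs{A_n} = \sum_{k=0}^{n-1} \E{M_{k+1}^2}$. Writing $\E{M_{k+1}^2} = \E{\psi(X_k)}$ for the function $\psi(x) := \sum_{y} P(x,y)\bigl(V(y) - (PV)(x)\bigr)^2$, which is bounded because $V$ and $PV$ are bounded on the finite state space $\calS$, the geometric mixing of $\mathcal M$ gives $\bigl|\E{\psi(X_k)} - \Exp{\pi}{\psi(X)}\bigr| \le C\rho^{k}\|\psi\|_\infty$ for some $\rho \in (0,1)$, uniformly over the starting distribution. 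Averaging over $k$, the Cesàro limit $\tfrac1n \sum_{k=0}^{n-1}\E{\psi(X_k)} \to \Exp{\pi}{\psi(X)} =: \sigma^2$ is therefore the same for every initial distribution $\pi_0$ (and a short computation using stationarity of $\pi$ identifies $\sigma^2$ with the form in~\eqref{eq:kmu3}, though this is not needed here).

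It remains to show the boundary term $V(X_0) - V(X_n)$ is asymptotically negligible after the $1/n$ scaling. Since $|V(X_0) - V(X_n)| \le 2\|V\|_\infty$, its variance is $O(1)$, and by Cauchy--Schwarz its covariance with $A_n$ is at most $\sqrt{\V\lrs{A_n}\, O(1)} = O(\sqrt n)$. Expanding $\V\lrs{A_n + V(X_0) - V(X_n)}$ and dividing by $n$ thus leaves only $\tfrac1n \V\lrs{A_n} \to \sigma^2$. Applying this with an arbitrary $\pi_0$ and again with $\pi_0 = \pi$ yields the claimed equality, with common value $\sigma^2$.

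The main obstacle I anticipate is the bookkeeping in the last paragraph: one must control the cross term between the martingale part $A_n$ and the bounded boundary part carefully enough to see it is $o(n)$, and confirm that the mixing bound applies uniformly in $\pi_0$ (which is where finiteness, irreducibility, and aperiodicity enter). An alternative, perhaps more transparent for the independence-of-$\pi_0$ statement alone, is a coupling argument: couple a $\pi_0$-started chain with a $\pi$-started chain so they coalesce at a time $T$ with geometric tail; after coalescence the increments agree, so the two partial sums differ by a random variable bounded by $2\|f\|_\infty T$ of finite variance, and a Cauchy--Schwarz estimate shows the two variances differ by $O(\sqrt n)$, hence share the same $1/n$-limit.
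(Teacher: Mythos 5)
Your proof is correct, but it takes a genuinely different route from the paper's. The paper argues directly on the covariance expansion: it writes $\V\lrs{\sum_{k=0}^{n-1} f(X_k) \mid X_0\sim\pi_0}$ as $n$ per-step variance terms plus a double sum of cross-covariances, and then bounds the gap between each $\pi_0$-started term and its $\pi$-started counterpart by a total-variation/geometric-mixing estimate of order $C\alpha^k$, so that the accumulated discrepancy is $o(n)$ after dividing by $n$. Your argument instead pushes the Poisson equation through first: the telescoping identity $\sum_k g(X_k) = A_n + V(X_0) - V(X_n)$ converts the correlated sum into a martingale plus a uniformly bounded remainder, orthogonality of the martingale increments collapses the double sum entirely, and only the single Ces\`aro average $\tfrac1n\sum_k \E{\psi(X_k)}$ must be controlled by mixing, with one Cauchy--Schwarz disposing of the cross term. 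What your route buys is lighter bookkeeping and a stronger conclusion in one pass: it establishes existence of the limit for every $\pi_0$ and identifies it as $\Exp{\pi}{V^2(X)} - \Exp{\pi}{(PV)^2(X)}$, i.e.\ the representation~\eqref{eq:kmu3}, which the paper derives separately via Lemma~\ref{lem:asymp_var}. What the paper's route buys is independence from the Poisson equation: it needs only boundedness of $f$ and geometric ergodicity, whereas you must first invoke existence and boundedness of a solution $V$ of~\eqref{eq:PE} (automatic on a finite irreducible aperiodic chain, but a genuine extra ingredient). Your closing coupling sketch is also sound and is essentially a probabilistic repackaging of the same total-variation estimates the paper uses.
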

\begin{proof}
    For $k\in \N_+$, let $\bar{f}_k := \Exp{\pi_0}{f(X_k)} = \E{f(X_k) | X_0 \sim \pi_0}$. Then,
    \begin{align*}
        \kappa(f)
        &= \frac{1}{n} ~ \Exp{\pi_0}{ \lrp{ \sum\limits_{k=0}^{n-1} \lrp{ f(X_k) - \bar{f}_k }}^2 }\\
        &= \frac{1}{n} ~ \sum\limits_{k=0}^{n-1}\Exp{\pi_0}{ (f(X_k) - \bar{f}_k)^2 } + \frac{2}{n} \sum\limits_{k=0}^{n-1}\sum\limits_{i < k} \Exp{\pi_0}{\lrp{f(X_i)-\bar{f}_i}\lrp{f(X_k)-\bar{f}_k} }. \numberthis\label{eqn:var_pi0}
    \end{align*}
    Clearly, 
    \[ \lim\limits_{n\rightarrow\infty} \frac{1}{n} ~ \sum\limits_{k=0}^{n-1}\Exp{\pi_0}{ (f(X_k) - \bar{f}_k)^2  } = \lim\limits_{n\rightarrow \infty} \frac{1}{n} ~ \sum\limits_{t=0}^{n-1}\Exp{\pi}{ (f(X_t)- \bar{f})^2}. \numberthis\label{eq:conv1} \]
    This follows from the following:
        \begin{align*}
        &\lim\limits_{n\rightarrow\infty}\frac{1}{n} ~ \sum\limits_{k=0}^{n-1}\Exp{\pi_0}{ (f(X_k) - \bar{f}_k)^2  } 
        \\ 
        &= \lim\limits_{n\rightarrow\infty} ~ \frac{1}{n} ~ \sum\limits_{k=0}^{n-1}\Exp{\pi_0}{ (f(X_k) - \bar{f})^2}- \lim\limits_{n\rightarrow\infty}~ \frac{1}{n} ~ \sum\limits_{k=0}^{n-1} (\bar{f} - \bar{f}_k)^2\\
        &= \lim\limits_{n\rightarrow\infty} ~ \frac{1}{n} ~ \sum\limits_{k=0}^{n-1}\Exp{\pi}{ (f(X_k) - \bar{f})^2},
    \end{align*}
    where we used that 
    \[ \lim\limits_{n\rightarrow \infty} \frac{1}{n} \sum\limits_{k=0}^{n-1} (\bar{f} - \bar{f}_k)^2 = 0, \]
    which follows from geometric mixing of the underlying Markov chain.
    
    Let us now show that the second term in~\eqref{eqn:var_pi0} converges to the right limit. Towards this, we first re-write it as 
    \begin{align*}
       \mathcal B = \frac{2}{n} ~ \sum\limits_{k=0}^{n-1}\sum\limits_{i<k} \lrp{\Exp{\pi_0}{ (f(X_i) - \bar{f})(f(X_k) - \bar{f}) } + (\bar{f}_i - \bar{f})(\bar{f} - \bar{f}_k)},
    \end{align*}
    and let the corresponding term under $\pi$ be 
    \[ \mathcal B'= \frac{2}{n} ~ \sum\limits_{k=0}^{n-1}\sum\limits_{i<k} \Exp{\pi}{ (f(X_i) - \bar{f})(f(X_k) - \bar{f}) } . \]
    Let 
    \[T_1 := \Exp{\pi_0}{ (f(X_i) - \bar{f})(f(X_k) - \bar{f}) }\quad \text{ and } \quad T_2 := \Exp{\pi}{ (f(X_i) - \bar{f})(f(X_k) - \bar{f}) }.\]
    Then,
    \begin{align*}
        \abs{\mathcal B - \mathcal B'} 
        &= \abs{ \frac{2}{n} ~ \sum\limits_{k=0}^{n-1}\sum\limits_{i < k} (T_1 - T_2) + \frac{2}{n} ~ \sum\limits_{k=0}^{n-1} \sum\limits_{i < k} (\bar{f}_i - \bar{f})(\bar{f} - \bar{f}_k) }\\
        &\le  \frac{2}{n} ~ \sum\limits_{k=0}^{n-1}\sum\limits_{i < k} \abs{T_1 - T_2} + \frac{2}{n} ~ \sum\limits_{k=0}^{n-1} \sum\limits_{i < k} \abs{\bar{f}_i - \bar{f}}\abs{\bar{f} - \bar{f}_k }\\
        &\overset{(a)}{\le} \frac{2}{n} ~ \sum\limits_{k=0}^{n-1}\sum\limits_{i < k} \abs{T_1 - T_2} + \frac{2}{n} ~ \sum\limits_{k=0}^{n-1} \sum\limits_{i < k} 4f^2_{\max}C\alpha^k,\numberthis\label{eq:bound1}
    \end{align*}    
    where we used Lemma~\ref{lem:geometric_conv_bound} to get the bound in $(a)$ above, and recall that $f_{\max}:= \max_x f(x)$. Clearly, the second term converges to $0$ as $n\rightarrow \infty$. Let us now bound the first term.
    
    \begin{align*}
        \abs{ T_1 - T_2 }
        &=\abs{ \sum\limits_{x\in\calS}\sum\limits_{y\in\calS}\sum\limits_{z\in\calS} \lrp{\pi_0(x) - \pi(x)}P^i(x,y)P^{k-i}(y,z) (f(y)-\bar{f})(f(z) - \bar{f})}\\
        &\le 4f^2_{\max}\sum\limits_{z\in\calS}\abs{\sum\limits_{x\in\calS}\sum\limits_{y\in\calS} \lrp{\pi_0(x) - \pi(x)}P^i(x,y)P^{k-i}(y,z)} \\
        &=4f^2_{\max}\sum\limits_{z\in\calS}\abs{\sum\limits_{x\in\calS} \lrp{\pi_0(x)P^k(x,z) - \pi(x)P^k(x,z)}} \\
        &=\sum\limits_{z\in\calS}\abs{\sum\limits_{x\in\calS} \pi_0(x)P^k(x,z) - \pi(z)} f^2_{\max}\\
        &\le \sum\limits_{z\in\calS}\sum\limits_{x\in\calS}\pi_0(x)\abs{P^k(x,z) - \pi(z)} f^2_{\max}\\
        &= 2L^2\sum\limits_{x\in\calS}\pi_0(x)\|P^k(x,\cdot) - \pi\|_{\text{T.V.}} \\
        &\le 2f^2_{\max} C\alpha^k.
    \end{align*}
    Using the above bound in~\eqref{eq:bound1}, we get that $\lim\limits_{n\rightarrow\infty} \abs{\mathcal{B} - \mathcal{B}'} \le  0$, proving the desired result.
\end{proof}

\begin{lemma}\label{lem:asymp_var}
    Consider the Markov Chain starting in distribution $\pi_0$. For $j\in\N$ and 
         $\gamma(k) := \Exp{\pi}{ \lrp{ f(X_k) - \bar{f}} \lrp{ f(X_0) - \bar{f} }  },$ 
    the asymptotic variance of $f$, $\kappa(f)$, is given by
    \begin{align*}
        \kappa(f) 
        &= \gamma(0) + 2\lim\limits_{n\rightarrow\infty} \sum\limits_{k=1}^{n-1}\gamma(k)=2 \sum\limits_{x\in\calS} \pi(x)(f(x)-\bar{f})V(x) - \sum\limits_{x\in\calS}\pi(x)(f(x)-\bar{f})^2. 
    \end{align*}
\end{lemma}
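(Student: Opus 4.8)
The plan is to work throughout with the stationary chain. By Lemma~\ref{lem:asympvar_pi0} the asymptotic variance is independent of the initial law, so I may take $X_0 \sim \pi$; then $\E{f(X_k)} = \bar{f}$ for every $k$, and the lag-$j$ autocovariance $\Exp{\pi}{(f(X_i)-\bar{f})(f(X_{i+j})-\bar{f})}$ depends only on $j$ and equals $\gamma(j)$. Expanding the square and using this stationarity,
\begin{align*}
\frac{1}{n}\V\lrs{\sum_{k=0}^{n-1} f(X_k)} = \frac{1}{n}\E{\lrp{\sum_{k=0}^{n-1}(f(X_k)-\bar{f})}^2} = \gamma(0) + \frac{2}{n}\sum_{k=0}^{n-1}\sum_{i<k}\gamma(k-i).
\end{align*}

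First I would reorganize the double sum by the time lag $j = k-i$: for each $j \in \{1,\dots,n-1\}$ there are exactly $n-j$ pairs $(i,k)$ with $0 \le i < k \le n-1$ and $k-i=j$, so the right-hand side equals $\gamma(0) + 2\sum_{j=1}^{n-1}(1-j/n)\gamma(j)$. To pass to the limit I would invoke geometric mixing: Lemma~\ref{lem:geometric_conv_bound} supplies a bound $\abs{\gamma(j)} \le C\alpha^j$ for some $\alpha \in (0,1)$, whence $\sum_{j\ge 1}\abs{\gamma(j)} < \infty$ and the defect term is controlled by $\frac{1}{n}\sum_{j\ge 1} j\,C\alpha^j = O(1/n) \to 0$. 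Absolute summability together with the vanishing defect then yields $\kappa(f) = \gamma(0) + 2\lim_{n\to\infty}\sum_{k=1}^{n-1}\gamma(k)$, which is the first claimed identity.

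For the second identity I would bring in the Poisson equation. Writing $g := f - \bar{f}{\bf 1}$, I would use the particular solution $\hat V := \sum_{k\ge 0} P^k g$, whose convergence follows from geometric mixing and which satisfies $(I-P)\hat V = g$ because $P^k g \to {\bf 0}$ (as $g$ has zero stationary mean). Expanding $\gamma(k) = \sum_x \pi(x)(f(x)-\bar{f})(P^k g)(x)$ and summing over $k \ge 1$ gives $\sum_{k\ge 1}(P^k g)(x) = \hat V(x) - g(x)$, so
\begin{align*}
\sum_{k=1}^{\infty}\gamma(k) = \sum_x \pi(x)(f(x)-\bar{f})\hat V(x) - \gamma(0).
\end{align*}
Substituting this into the first identity collapses the two $\gamma(0)$ contributions and produces $\kappa(f) = 2\sum_x \pi(x)(f(x)-\bar{f})\hat V(x) - \sum_x \pi(x)(f(x)-\bar{f})^2$. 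Finally, since any two solutions of the Poisson equation differ by a multiple of ${\bf 1}$ and $\sum_x \pi(x)(f(x)-\bar{f}) = 0$, the quantity $\sum_x\pi(x)(f(x)-\bar{f})V(x)$ is the same for every $V$ in the solution set; hence the formula holds with the arbitrary solution $V$ appearing in the statement.

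The main obstacle I anticipate is the limit interchange in the first part: one must simultaneously justify the convergence of $\sum_{j}\gamma(j)$ and the vanishing of the triangular-weight defect $\sum_{j}(j/n)\gamma(j)$, and both rest squarely on the geometric decay of $\gamma(j)$ delivered by the mixing bound. The passage to the Poisson equation in the second part is conceptually the heart of the statement but is computationally short once the Neumann-type series representation of $\hat V$ is in hand.
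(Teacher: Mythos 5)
Your proposal is correct and follows essentially the same route as the paper: pass to the stationary chain via Lemma~\ref{lem:asympvar_pi0}, expand the variance and reorganize by lag, dispose of the triangular weights using the geometric decay of $\gamma(j)$ (this is exactly the content of the paper's Lemma~\ref{lem:simplifygamma}, which you reprove inline), and then substitute the series representation of the Poisson solution --- your $\hat V = \sum_{k\ge 0}P^k g$ is the same object as the paper's $V(x) = \sum_k \E{f(X_k)-\bar f \mid X_0 = x} + c$. The only cosmetic difference is that you make explicit the invariance of $\sum_x \pi(x)(f(x)-\bar f)V(x)$ under constant shifts of $V$, which the paper uses implicitly.
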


\begin{proof}
    From Lemma~\ref{lem:asympvar_pi0}, we have
    \begin{align*}
        \kappa(f):=\lim\limits_{n\rightarrow \infty}\frac{1}{n} \V\lrs{ \sum\limits_{k=0}^{n-1} f(X_k) | X_0 \sim \pi_0 }        &= \lim\limits_{n\rightarrow \infty}\frac{1}{n} \V\lrs{ \sum\limits_{k=0}^{n-1} f(X_k) | X_0 \sim \pi }.
    \end{align*}
    We show that the r.h.s. in the above equation is same as the expression in the lemma. Recall that for any $k\ge 1$,  $\E{ f(X_k) | X_0 \sim \pi } = \bar{f} $. For simplicity of notation, we denote $\Exp{ X_0 \sim \pi_0 }{\cdot}$ by $\Exp{\pi_0}{\cdot}$. Then,
    \begin{align*}
        \V\lrs{ \sum\limits_{k=0}^{n-1} f(X_k) | X_0 \sim \pi } 
        &= \Exp{\pi}{\lrp{\sum\limits_{k=0}^{n-1} \lrp{f(X_k) - \bar{f}}}^2 }\\
        &= \Exp{\pi}{ \sum\limits_{k=0}^{n-1}(f(X_k) - \bar{f})^2 } \\
            &\qquad\qquad + 2\sum\limits_{k=0}^{n-1}\sum\limits_{i < k } \Exp{\pi}{ \lrp{f(X_i)- \bar{f}} \lrp{ f(X_k) - \bar{f} } }\\
        &\overset{(a)}{=} \Exp{\pi}{ \sum\limits_{k=0}^{n-1}(f(X_k) - \bar{f})^2 } \\
            &\qquad\qquad + 2\sum\limits_{k=1}^{n-1} (n-k) \Exp{\pi}{ \lrp{f(X_k)- \bar{f}} \lrp{ f(X_0) - \bar{f} } }\\
        &= n \gamma(0)  + 2\sum\limits_{k=1}^{n-1} (n-k) \gamma(k), \numberthis\label{eq:var_n}
    \end{align*}
    where $(a)$ follows from Markov property. Dividing by $n$ and taking limits, we get
    \begin{align*}
        \lim\limits_{n\rightarrow \infty } \frac{1}{n} \V\lrs{ \sum\limits_{k=0}^{n-1} f(X_k) | X_0 \sim \pi } &= \gamma(0) + \lim\limits_{n\rightarrow \infty}2\sum\limits_{k=1}^{n-1} \frac{n-k}{n} \gamma(k) \\
        &\overset{(a)}{=} \gamma(0) + \lim\limits_{n\rightarrow \infty}2\sum\limits_{k=1}^{n-1}  \gamma(k) \numberthis\label{eq:exp1} \\
        &= \lim\limits_{n\rightarrow \infty} 2\sum\limits_{k=0}^{n-1} \frac{n-k}{n} \gamma(k) - \gamma(0)\\
        &\overset{(b)}{=} \lim\limits_{n\rightarrow\infty} 2 \sum\limits_{k=0}^{n-1}\gamma(k) - \gamma(0),\numberthis\label{eq:asymp_var_form_3}
    \end{align*}
    where $(a)$ and $(b)$ follow from Lemma~\ref{lem:simplifygamma}. Expression in~\eqref{eq:exp1} corresponds to the first expression in the lemma. Next, we show that the expression in~\eqref{eq:asymp_var_form_3} is same as the second expression in the lemma. 
    
    Since $V$ is the solution of the Poisson equation, we have the following form for $V$ \cite[Proposition 21.2.3, Lemma 21.2.2]{douc2018markov}
    \[ V(x) = \sum\limits_{k=0}^\infty \E{f(X_k)-\bar{f} | X_0 = x} + c, \]
    for any constant $c$. Substituting this in the expression in the lemma statement gives:  
    \begin{align*}
        &2 \sum\limits_{x\in\calS} \pi(x)(f(x)-\bar{f})V(x) - \gamma(0) \\
        &= 2 \sum\limits_{x\in\calS} \pi(x)(f(x)-\bar{f})\lrp{\sum\limits_{k=0}^\infty \E{f(X_k)-\bar{f} | X_0 = x}}   - \gamma(0)\\
        & \overset{(a)}{=} 2 \sum\limits_{k=0}^\infty \Exp{\pi}{(f(X_0)-\bar{f})\lrp{f(X_k)-\bar{f} }}  - \gamma(0)\\
        &= 2 \lim\limits_{n\rightarrow \infty} \sum\limits_{k=1}^{n-1} \gamma(k) - \gamma(0),
    \end{align*}
    which is same as~\eqref{eq:asymp_var_form_3}. Here, to change the limits and expectation in $(a)$, we used the bounded convergence theorem since $f$ is bounded (underlying Markov chain is on a finite state space).
\end{proof}

\begin{lemma}\label{lem:simplifygamma}
    For $k\in\N$ and  $\gamma(k)$ below,
    \[ \lim\limits_{n\rightarrow \infty} \sum\limits_{k=0}^{n-1} \frac{n-k}{n}\gamma(k) = \lim\limits_{n\rightarrow\infty} \sum\limits_{k=0}^{n-1} \gamma(k),\]  where  
    \[\gamma(k) := \Exp{\pi}{ (f(X_k) - \bar{f})(f(X_0) - \bar{f}) }. \]
\end{lemma}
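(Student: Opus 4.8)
The plan is to reduce the claim to showing that the difference of the two partial sums vanishes in the limit. Writing out the difference term by term,
\[ \sum_{k=0}^{n-1} \gamma(k) - \sum_{k=0}^{n-1} \frac{n-k}{n}\gamma(k) = \sum_{k=0}^{n-1} \frac{k}{n}\gamma(k) = \frac{1}{n}\sum_{k=0}^{n-1} k\,\gamma(k), \]
so it suffices to prove that $\frac{1}{n}\sum_{k=0}^{n-1} k\,\gamma(k) \to 0$ as $n\to\infty$, together with the fact that the right-hand limit $\lim_{n\to\infty}\sum_{k=0}^{n-1}\gamma(k)$ exists finitely; the left-hand limit then exists as well and equals it.

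The key input is geometric mixing of the underlying finite, irreducible, aperiodic Markov chain, which yields a bound of the form $|\gamma(k)| \le C\alpha^k$ for some constant $C > 0$ and some $\alpha \in (0,1)$. This is exactly the type of estimate recorded in Lemma~\ref{lem:geometric_conv_bound} and already invoked in the proof of Lemma~\ref{lem:asympvar_pi0}. Concretely, conditioning on $X_0$ in $\gamma(k) = \Exp{\pi}{(f(X_0)-\bar{f})(f(X_k)-\bar{f})}$ reduces matters to the decay of $\E{f(X_k)-\bar{f}\mid X_0 = x}$, which is geometric in $k$ by convergence of $P^k(x,\cdot)$ to $\pi$ in total variation. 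This geometric bound immediately gives absolute convergence of both $\sum_k \gamma(k)$ and $\sum_k k\,\gamma(k)$, since $\sum_k k\alpha^k < \infty$ for $\alpha\in(0,1)$. In particular the right-hand limit exists finitely.

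It then remains to observe that the partial sums $\sum_{k=0}^{n-1} k\,\gamma(k)$ are bounded uniformly in $n$ by $\sum_{k=0}^{\infty} k|\gamma(k)| < \infty$, so that $\frac{1}{n}\sum_{k=0}^{n-1} k\,\gamma(k) \to 0$. Combining this with the displayed identity shows that $\lim_{n\to\infty} \sum_{k=0}^{n-1}\frac{n-k}{n}\gamma(k)$ exists and equals $\lim_{n\to\infty} \sum_{k=0}^{n-1}\gamma(k)$, as claimed. The only substantive point is the geometric decay of $\gamma(k)$; once that estimate is in hand the remainder is an elementary Ces\`aro argument, and since the mixing bound follows directly from the finite-state irreducibility and aperiodicity assumptions already in force, I do not anticipate any genuine obstacle beyond assembling these pieces.
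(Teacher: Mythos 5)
Your proposal is correct and follows essentially the same route as the paper's proof: both reduce the claim to showing $\frac{1}{n}\sum_{k=0}^{n-1}k\,\gamma(k)\to 0$, bound $|\gamma(k)|$ geometrically via the mixing estimate in Lemma~\ref{lem:geometric_conv_bound}, and conclude from the convergence of $\sum_k k\alpha^k$. Your additional remark that the geometric bound also guarantees existence of the limit $\lim_n\sum_{k=0}^{n-1}\gamma(k)$ is a welcome (if minor) tightening of a point the paper leaves implicit.
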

\begin{proof}
    The proof of this lemma follows along the lines of the proof of \cite[Lemma 3]{mou2020heavy}. However, we give the proof for completeness. 
    Let $f_{\max} := \max\limits_{s\in\calS} f(x)$. Since $X$ is an irreducible and aperiodic Markov chain on a finite state space, it mixes geometrically fast \cite{levin2017markov}, i.e., there exist constants $C > 0$ and $\alpha\in (0,1)$ such that for all $k\in\N_+$, 
    \[ \sup\limits_{x\in\calX}~ d_{TV}(P( X_k |X_0 = x), \pi) \le C\alpha^k, \]
    where $d_{TV}(\cdot,\cdot)$ represents the total variation distance between the two input distributions. Below, we first show that $\gamma(k)$ is bounded. 
    \begin{align*}
        \abs{\gamma(k)} 
        &= \abs{ \Exp{\pi}{ \lrp{f(X_k) - \bar{f}}\lrp{ f(X_0) - \bar{f} } } }\\
        &= \abs{ \Exp{\pi}{ \E{ \lrp{f(X_k) - \bar{f}}\lrp{ f(X_0) - \bar{f} } }  | X_0 = x } }\\
        &= \abs{ \Exp{x\sim\pi}{ \E{ \lrp{f(X_k) - \bar{f}}| X_0 = x }\lrp{ f(x) - \bar{f} } } }\\
        &\le  \Exp{x\sim\pi}{ \abs{\E{ \lrp{f(X_k) - \bar{f}}| X_0 = x }}\abs{ f(x) - \bar{f} } } \\
        &\le 4f^2_{\max}C\alpha^k, \numberthis\label{eq:boundedgamma}
    \end{align*}
    where we used Lemma~\ref{lem:geometric_conv_bound} to get the last inequality above. 

    Next, define
    \[ V_1 := \lim \limits_{n\rightarrow \infty} \sum\limits_{k=0}^{n-1} \gamma(k), \quad\text{ and }\quad V_2 := \lim\limits_{n\rightarrow \infty} \sum\limits_{k=0}^{n-1} \frac{n-k}{n} \gamma(k). \]
    Consider the following:
    \begin{align*}
        \abs{ V_1 - V_2 } 
        &=  \lim\limits_{n\rightarrow\infty} \abs{ \sum\limits_{k=0}^{n-1} \frac{k}{n}\gamma(k) }\le \lim\limits_{n\rightarrow\infty} \sum\limits_{k=0}^{n-1}\frac{k}{n}\abs{\gamma(k)}\overset{(a)}{\le} 4f^2_{\max}C\lim\limits_{n\rightarrow\infty} \frac{1}{n}\sum\limits_{k=1}^{n-1} k\alpha^k\le 0,
    \end{align*}
    where in $(a)$ we used Lemma~\ref{lem:geometric_conv_bound}, and the last inequality follows since since $\alpha < 1$. This implies $V_1 = V_2$.
\end{proof} 

\begin{lemma}\label{lem:geometric_conv_bound}
    Let $X = (X_k: k\ge 0)$ be DTMC on a finite-state space $\calS$ with a unique stationary distribution $\pi$. Let $f:\calS\rightarrow\R$ be any function, and let $\bar{f}:= \sum_{x\in\calS} \pi(x)f(x)$ denote its stationary expectation. Then, for any $k\in\N_+$, there exist constants $\alpha\in (0,1)$ and $C > 0$, such that for any initial distribution $X_0\sim \pi_0$, and $k > j$,
    \[ \abs{ \E{ f(X_k) - \bar{f}  | X_0 \sim \pi_0} } \le 2f_{\max}C\alpha^k,\numberthis\label{eq:boundfromstatfirst} \]
    and
    \[\abs{ \E{ (f(X_k) - \bar{f})(f(X_j) - \bar{f})  | X_0 \sim \pi_0} } \le 4f^2_{\max} C^2\alpha^{k-j}, \numberthis\label{eq:boundfromstat}\]
    where $f_{\max}:= \max_{x \in \calS} f(x)$. 
    \begin{comment}
    Further, for $j\ge 1$, let $\bar{f}_j := \E{f(X_j) | X_0 \sim \pi_0}$. Then for $k > j$, there exists a constant $C' > 1$ such that
    \[ \operatorname{Cov}_{\pi_0}(f(X_k), f(X_j)) := \E{ (f(X_k) - \bar{f}_k) (f(X_j) - \bar{f}_j) } \le f^2_{\max} C'\alpha^{k-j}, \numberthis\label{eq:cov} \]
    where $\operatorname{Cov}_{\pi_0}(.,.)$ denotes the covariance when the Markov chain is started with initial distribution $\pi_0$. Additionally, for $k >  j$, there exists a constant $C_1 > 1$ such that 
    \[ \abs{\Exp{\pi_0}{ (f(X_k) - \bar{f})(f(X_j) - \bar{f}) }  - \Exp{\pi}{ (f(X_k) - \bar{f})(f(X_j) - \bar{f}) }} \le f^2_{\max}C_1 \alpha^k. \numberthis\label{eq:diffcov} \]
\end{comment}

\end{lemma}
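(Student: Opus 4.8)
The plan is to deduce both inequalities from the geometric ergodicity of the chain. Since $\calM$ is irreducible and aperiodic on a finite state space, there exist constants $C > 0$ (which we may take to satisfy $C \ge 1$) and $\alpha \in (0,1)$ with $\sup_{x\in\calS} d_{TV}(P^k(x,\cdot), \pi) \le C\alpha^k$ for all $k \ge 1$. Both bounds will then follow by writing the relevant expectations as pairings of $f$ against the signed measure $P^k(x,\cdot) - \pi$ and controlling its mass in total variation.

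For~\eqref{eq:boundfromstatfirst}, I would first fix a start state $x$ and use $\bar f = \sum_{y}\pi(y)f(y)$ to write $\E{f(X_k) - \bar f \mid X_0 = x} = \sum_{y\in\calS}\lrp{P^k(x,y) - \pi(y)}f(y)$. Bounding $\abs{f(y)} \le f_{\max}$ and using $\sum_{y}\abs{P^k(x,y) - \pi(y)} = 2\,d_{TV}(P^k(x,\cdot),\pi) \le 2C\alpha^k$ gives $\abs{\E{f(X_k) - \bar f \mid X_0 = x}} \le 2f_{\max}C\alpha^k$, uniformly in $x$. Averaging over $\pi_0$ and using the triangle inequality then yields~\eqref{eq:boundfromstatfirst}.

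For~\eqref{eq:boundfromstat}, the idea is to reduce the two-point correlation to the one-point estimate above by conditioning on $X_j$. By the tower rule the left-hand side equals $\E{(f(X_j) - \bar f)\,\E{f(X_k) - \bar f \mid X_j} \mid X_0 \sim \pi_0}$, and by the Markov property the inner conditional expectation is $\sum_{y}\lrp{P^{k-j}(X_j,y) - \pi(y)}f(y)$, which the per-state bound from the previous step (now with gap $k-j$) controls by $2f_{\max}C\alpha^{k-j}$. Combining this with $\abs{f(X_j) - \bar f} \le 2f_{\max}$ and using $\abs{\E{\cdot}} \le \E{\abs{\cdot}}$ gives $\abs{\E{(f(X_k)-\bar f)(f(X_j)-\bar f)\mid X_0\sim\pi_0}} \le 4f_{\max}^2 C\alpha^{k-j} \le 4f_{\max}^2 C^2\alpha^{k-j}$, the final step using $C \ge 1$.

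This is largely bookkeeping once mixing is invoked; the two points that need care are the normalization relating $\sum_y \abs{P^k(x,y)-\pi(y)}$ to $d_{TV}$ (the factor of $2$), and the conditioning step, which is what makes the decay rate in~\eqref{eq:boundfromstat} depend on the gap $k-j$ rather than on $k$.
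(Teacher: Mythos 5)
Your proposal is correct and follows essentially the same route as the paper: the one-point bound via pairing $f$ against the signed measure $P^k(x,\cdot)-\pi$ and controlling its mass by total variation, and the two-point bound by conditioning on $X_j$ and applying the one-point estimate with gap $k-j$. Your handling of the factor of $2$ in the TV normalization and the $C\ge 1$ step to absorb $C$ into $C^2$ is, if anything, slightly more careful than the paper's own write-up.
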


\begin{proof}
    This lemma follows from the geometric mixing of the underlying Markov chain. Parts of this lemma are borrowed from~\cite{mou2020heavy}. 
    
\noindent{\bf Proving~\eqref{eq:boundfromstatfirst}. } Consider the following inequalities: 
    \begin{align*}
         \abs{ \E{ f(X_k) - \bar{f}  | X_0 \sim \pi_0} } 
         &= \abs{ \sum\limits_{x'\in\calS}\pi_0(x') \sum\limits_{x\in\calS} f(x)\lrp{ \mathbb{P}\lrp{ X_k = x | X_0 = x' }  - \pi(x) } } \\
         &\le \sum\limits_{x'\in\calS}\pi_0(x') \sum\limits_{x\in\calS} \abs{ f(x)}\abs{ \mathbb{P}\lrp{ X_k = x | X_0 = x' }  - \pi(x) } \\
        &\le f_{\max}\sum\limits_{x'\in\calS}\pi_0(x') \sum\limits_{x\in\calS}  \abs{ \mathbb{P}\lrp{ X_k = x  | X_0 = x' }  - \pi(x) } \\
        &\le f_{\max}~ \sup\limits_{x'\in\calS} ~\sum\limits_{x\in\calS}  \abs{ \mathbb{P}\lrp{ X_k = x | X_0 = x' }  - \pi(x) }\\
        &\le 2 f_{\max} C\alpha^{k},
    \end{align*}
    where the last inequality follows from the definition of total variation distance and geometric mixing of the underlying Markov chain.

    \noindent{\bf Proving~\eqref{eq:boundfromstat}. } We now prove the other inequality using the geometric mixing of the Markov chain. Let $\Exp{\pi_0}{\cdot}$ represent $\E{\cdot | X_0 \sim \pi_0}$.
    \begin{align*}
         \abs{ \Exp{\pi_0}{ (f(X_k) - \bar{f})(f(X_j) - \bar{f})  }} 
         &=\abs{ \Exp{\pi_0}{ (f(X_j) - \bar{f}) \E{(f(X_k) - \bar{f}) | X_j}}}\\
         &\le  \Exp{\pi_0}{ |f(X_j) - \bar{f}| } \abs{ \sup\limits_{x\in\calS} \E{(f(X_k) - \bar{f}) | X_j=x} }. \numberthis\label{eq:int_dist_stat_1}
    \end{align*}
    Since~\eqref{eq:boundfromstatfirst} holds for any initial distribution $\pi_0$, we have 
    \begin{align*}
    \abs{\sup_{x} \E{ (f(X_k) - \bar{f}) | X_j = x}} = \sup_{x} \E{ (f(X_{k-j}) - \bar{f}) | X_0 = x} \le 2f_{\max} C \alpha^{k-j}.
    \end{align*}
Using the above bound in~\eqref{eq:int_dist_stat_1}, 
    \begin{align*}
        \abs{ \Exp{\pi_0}{ (f(X_k) - \bar{f})(f(X_j) - \bar{f})  }} &\le 2f_{\max} C \alpha^{k-j}   \Exp{\pi_0}{ \abs{(f(X_j) - \bar{f}) }}  \le  4f^2_{\max} C \alpha^{k-j},
    \end{align*}
proving the desired result.

\end{proof}

\subsection{Discussion on Batched Mean Estimators}\label{app:BM}
A natural first approach to estimate $\kappa(f)$ is to use the Monte Carlo (MC) technique. For this, observe that \[\kappa =  \lim\limits_{m\rightarrow \infty} \kappa_m, \quad \text{where} \quad \kappa_m:= \V[Z_m | X_0 \sim \pi], \quad \text{and} \quad Z_m := \frac{1}{\sqrt{m}}\sum_{k=0}^{m-1} f(X_k). \numberthis\label{eq:ZTmain}\]
A vanilla MC estimator, popularly known as Batched Means (BM) estimator,  proceeds by generating a single large trajectory of the underlying Markov chain $\mathcal M$ starting from any initial distribution $\pi_0$ on $\calS$. It then considers samples in non-overlapping buckets of a fixed length, say $m$. For $i\in \N$, let $$Z^{(i)}_m:=  \frac{1}{\sqrt{m}} \sum\limits_{k= (i-1)m}^{i(m-1)} f(X_k).$$
BM method estimates the first and second moments of $Z_m$, defined in~\eqref{eq:ZTmain}, using sample averages of $Z^{(i)}_m$ across $b$ buckets, say, and combines them to get an estimate for $\kappa_m$. %Call this $\hat{\kappa}^M_T$. 

The asymptotic convergence of BM (as $b\rightarrow\infty$) to $\kappa_m$ follows from the Ergodic Theorem for Markov chains \cite[Theorem 5.2.1]{douc2018markov}. However, a drawback of BM is that approximating the limit in $\kappa$ by the variance of a truncated random  variable $Z_m$ introduces a bias, which corresponds to $\abs{\kappa - \kappa_m}$. \cite{chien1997large} established that the mean-squared estimation error for estimating $\kappa$ using BM is $O(\frac{1}{m^2} + \frac{1}{b})$, where the $O(\frac{1}{m^2})$ term corresponds to the bias.  Thus, estimating $\kappa$ with mean-squared estimation error at most $\epsilon^2$ requires $m = O(\frac{1}{\epsilon})$, and $b= O(\frac{1}{\epsilon^2})$. This corresponds to a total of $mb = O(\frac{1}{\epsilon^3})$ samples. While this bound can be improved by designing better MC estimators, see \cite{flegal2010batch}, MC estimators typically also suffer from being non-adaptive (or non-recursive), i.e., $m$ is a function of $\epsilon$ - this necessitates restarting the procedure for different choices of $\epsilon$.

\section{Proof of Results in Section~\ref{sec:varest:tabular}}

\subsection{Proof of Theorem~\ref{th:finitebound}}\label{app:proof_th:finitebound}
For $s\in\calS$, let $\ind(s) \in \lrset{0,1}^{\abs{\calS}}$ be the indicator for $s$, i.e., it equals $1$ at the $s^\text{th}$ coordinate, and $0$ otherwise. Furthermore, for $n \in \lrset{1,2, \dots}$, define $\ind_n  := \ind(S_n)$, indicator for $S_n$. Then, setting $d=\abs{\calS}$,  basis vectors in the setup in Section~\ref{sec:lfa} as standard basis, and for $k\in\lrset{1,2,\dots}$, $\phi_k = \ind_k$, we recover Algorithm~\ref{alg:PE} from updates in Section~\ref{sec:alglfa}. 

Further, since in this setting, the set of fixed points $V$ of~\eqref{eq:PE} lies in the span of the basis vectors, $\kappa^*$ from~\eqref{eq:kappastar} is same as $\kappa(f)$. Hence, guarantees in Theorem~\ref{th:finiteboundlfa} with these adjustments, reduce to those in Theorem~\ref{th:finitebound}, proving the result. 

\subsection{Proof of Corollary~\ref{cor:samplecomplex}}\label{app:samplecomplex}
We prove the sample complexity bound separately for the algorithm with a constant step size, and that with the diminishing step size. But first, observe that $c_1 = O({1}/{\Delta_1})$, $c_2 = O(\Delta_1)$, and $c_3 = O(\Delta_1)$, implying that $\eta = O(1/\Delta_1)$. Furthermore, $\xi_1 = O(\|V^*\|^2_2)$, and $\xi_2 = O(\|V^*\|^2_2)$.

\begin{enumerate}
    \item[(a)] For Algorithm~\ref{alg:PE} with a constant step size $\alpha$, to estimate $\kappa(f)$ up to a mean estimation error of at most  $\epsilon$, we require
    \[ n \ge O\lrp{\frac{1}{\Delta_1 \alpha} \log\frac{\xi_1}{\epsilon^2} } =  O\lrp{\frac{\log\frac{1}{\epsilon^2}}{\epsilon^2}} \tilde{O}\lrp{ \frac{\eta^2\|V^*\|^2_2}{\Delta^2_1} }=  O\lrp{\frac{\log\frac{1}{\epsilon^2}}{\epsilon^2}} \tilde{O}\lrp{ \frac{\|V^*\|^2_2}{\Delta^4_1} }. \]
    This follows from Theorem~\ref{th:finitebound}(a), where setting the second term to at most $\epsilon^2$ gives 
    \[ \alpha = O\lrp{\epsilon^2}O\lrp{ \frac{\Delta_1}{\|V^*\|^2_2 \eta^2} }.
    \]
    %Here, $\tilde{O}(\cdot)$ hides poly-logarithmic factors. 
    Choosing $\alpha$ to satisfy this condition, and setting the first term in the error bound to $\epsilon^2$, we get the required sample complexity bound.
    
    \item[(b)] Next, for Algorithm~\ref{alg:PE} with diminishing step size $\alpha_i = \frac{\alpha}{i+h}$ for all $i\ge 1$, for estimating $\kappa$ up to a mean estimation error at most $\epsilon$, we require
    \[ n + h = O\lrp{ \frac{\alpha^2 \xi_2 \eta^2}{\epsilon^2} }  = O\lrp{\frac{1}{\epsilon^2}} O\lrp{  \frac{\eta^2\|V^*\|^2_2}{\Delta^2_1}}= O\lrp{\frac{1}{\epsilon^2}} O\lrp{  \frac{\|V^*\|^2_2}{\Delta^4_1}}. \]
    This follows from the mean-square estimation error bound in Theorem~\ref{th:finitebound}(b) by setting the rate-determining second term to at most $\epsilon^2$, after optimizing the term $$ \lrp{ \frac{\alpha^2}{\Delta_1 \alpha - 40} }$$ over $\alpha$. Note that $\alpha = O(1/\Delta_1)$ is the optimal choice. Further, with this choice of $\alpha$,  since $h = O(1/\Delta^4)$, we get
    \[ n = O\lrp{\frac{1}{\epsilon^2}} O\lrp{  \frac{\|V^*\|^2_2}{\Delta^4_1}}. \]
    \end{enumerate}

\subsection{MSE Bound for Stationary Variance Estimation from Section~\ref{sec:statvar}}\label{app:stationary_var}

In this section, we prove Theorem~\ref{th:finitebound_statvar} in $2$ steps, that we discuss below. 
\subsubsection{Step 1: Algorithm~\ref{alg:StationaryVariance} as a Linear SA}\label{app:step1_statvar}
    Algorithm~\ref{alg:StationaryVariance} for estimating the stationary variance of $f$, $v(f)$, can be formulated as a linear SA update, as discussed in this section. Consider the following matrices: 
\[
A^S_k := A^S(X_k)=  
\begin{bmatrix}
    -1 & 0 \\
    -cf(X_k) & -c
\end{bmatrix}, \quad\text{and}\quad b^S_k := b^S(X_k) = \begin{bmatrix}
    f(X_k)\\
    cf^2(X_k)
\end{bmatrix}.
\]
For $(\Theta^S_k)^T := [\bar{f}_{k}~v_k]$, Algorithm~\ref{alg:StationaryVariance} corresponds to the following update
\[ \Theta^S_{k+1} = \Theta^S_k + \alpha_k (A^S_k \Theta^S_k + b^S_k).\]
Define $A^S := \Exp{\pi}{A_k}$ and $b^S:= \Exp{\pi}{b_k}$. Then,
\[ 
A^S = \begin{bmatrix}
    -1 & 0\\
    -c \bar{f} & -c
\end{bmatrix}, \quad\text{and}\quad b^S = \begin{bmatrix}
    \bar{f}\\
    c\Exp{\pi}{f^2}
\end{bmatrix}.
\]

We now prove a contraction property for the average matrix $A^S$.

\begin{lemma}\label{lem:negdef_statvar}
    For any $\gamma > 1$ and $c > 0$ such that 
    \[ c {\bar{f}}^2 \le 2 \lrp{ - (\gamma-1) + \sqrt{(\gamma-1)\lrp{\gamma-1 + \gamma {\bar{f}}^2}} }, \]
    matrix $A^S$ satisfies 
    \[ \min\limits_{\Theta\in\R^2, \|\Theta\|_2 = 1}-  \Theta^T A^S \Theta > \gamma > 0. \]
\end{lemma}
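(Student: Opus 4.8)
The plan is to recognize the quantity $\min_{\|\Theta\|_2 = 1} -\Theta^T A^S \Theta$ as the smallest eigenvalue of a symmetric matrix and then reduce positive‑definiteness to the stated scalar condition via Sylvester's criterion. Since $\Theta^T A^S \Theta$ depends only on the symmetric part of $A^S$, I would first write $-\Theta^T A^S \Theta = \Theta^T M \Theta$, where
\[
M := -\tfrac12\bigl(A^S + (A^S)^T\bigr) = \begin{bmatrix} 1 & \tfrac{c\bar{f}}{2} \\[2pt] \tfrac{c\bar{f}}{2} & c \end{bmatrix}.
\]
By the Rayleigh–Courant–Fischer characterization, $\min_{\|\Theta\|_2 = 1}\Theta^T M \Theta = \lambda_{\min}(M)$, so the claim is exactly that $\lambda_{\min}(M) > \gamma$, i.e. that $M - \gamma I$ is positive definite on $\R^2$.

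The second step is to apply Sylvester's criterion to the $2\times 2$ symmetric matrix
\[
M - \gamma I = \begin{bmatrix} 1-\gamma & \tfrac{c\bar{f}}{2} \\[2pt] \tfrac{c\bar{f}}{2} & c - \gamma \end{bmatrix},
\]
which is positive definite precisely when its leading principal minor $1-\gamma$ and its determinant are both positive. The leading‑minor requirement pins down the admissible range of the contraction parameter $\gamma$, while the determinant $(1-\gamma)(c-\gamma) - \tfrac14 c^2\bar{f}^2 > 0$ is the binding condition that I expect to reproduce the hypothesis on $c$.

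The third step is the algebraic reduction of this determinant inequality to the closed‑form bound. Treating $(1-\gamma)(c-\gamma) - \tfrac14 c^2\bar{f}^2 > 0$ as a quadratic inequality in $c$, namely $\bar{f}^2 c^2 + 4(\gamma-1)c - 4\gamma(\gamma-1) < 0$, I would locate its roots by the quadratic formula; the relevant (positive) root is $c = \tfrac{2}{\bar{f}^2}\bigl(-(\gamma-1) + \sqrt{(\gamma-1)(\gamma-1+\gamma\bar{f}^2)}\bigr)$, so the inequality holds exactly when $c\bar{f}^2 \le 2\bigl(-(\gamma-1) + \sqrt{(\gamma-1)(\gamma-1+\gamma\bar{f}^2)}\bigr)$, which is precisely the lemma's hypothesis. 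Equivalently, one can complete the square, or simply square the stated surd inequality and match coefficients to verify it is equivalent to $\det(M-\gamma I)\ge 0$. The degenerate case $\bar{f}=0$ should be dispatched separately: there $M$ is diagonal with eigenvalues $1$ and $c$, so $\lambda_{\min}(M)=\min\{1,c\}$ and the claim is immediate.

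I expect the main obstacle to be bookkeeping rather than conceptual: correctly selecting the admissible root of the quadratic and discarding the spurious one (checking signs after squaring), verifying the equivalence of the surd‑form condition with the polynomial determinant inequality, and carefully tracking the strict‑versus‑nonstrict boundary so that the conclusion is a strict bound $\lambda_{\min}(M) > \gamma$. The positive‑definiteness reduction itself and the reduction of the minimum of the quadratic form to $\lambda_{\min}(M)$ are routine, so the weight of the proof sits in the coefficient‑matching computation that identifies the hypothesis on $c$ with the determinant condition.
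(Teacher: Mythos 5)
Your reduction to the symmetric part is correct ($-\Theta^T A^S\Theta=\Theta^T M\Theta$ with $M=\bigl[\begin{smallmatrix}1 & c\bar f/2\\ c\bar f/2 & c\end{smallmatrix}\bigr]$), and your determinant computation does reproduce the paper's quadratic: $\det(M-\gamma I)>0$ is exactly $\tfrac{c^2\bar f^2}{4}-c(1-\gamma)+\gamma-\gamma^2<0$, whose positive root gives the stated bound on $c\bar f^2$. The step that fails is Sylvester's criterion itself: the leading principal minor of $M-\gamma I$ is $1-\gamma$, which is \emph{negative} under the hypothesis $\gamma>1$. You say this minor ``pins down the admissible range of $\gamma$'' without noticing that the admissible range is $\gamma<1$, disjoint from the lemma's hypothesis. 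Concretely, $e_1^TMe_1=1$, so $\lambda_{\min}(M)\le 1<\gamma$ and the claimed inequality is unattainable; your own ``immediate'' case $\bar f=0$ already exhibits this, since there the hypothesis reads $0\le 0$ and holds for every $c$, yet $\lambda_{\min}(M)=\min\{1,c\}\le 1$. Worse, under the determinant condition together with $1-\gamma<0$ the matrix $M-\gamma I$ is negative definite, so what the hypothesis actually delivers is the reverse bound $\lambda_{\max}(M)<\gamma$. Executed honestly, your plan refutes the statement rather than proving it.

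For comparison, the paper avoids eigenvalues: it parametrizes the unit circle, minimizes $(1-c)\theta_1^2-c|\bar f||\theta_1|\sqrt{1-\theta_1^2}+c$ in closed form via Lemma~\ref{lem:aux1} to obtain the lower bound $\tfrac{1+c}{2}-\tfrac12\sqrt{(1-c)^2+c^2\bar f^2}$, and then squares the inequality ``lower bound $\ge\gamma$'' to arrive at the same quadratic in $c$. That squaring is valid only when $1+c-2\gamma\ge 0$, a condition the paper never verifies; since its lower bound is at most $\min\{1,c\}\le 1$, it likewise can never exceed $\gamma>1$. So both arguments hit the same wall --- yours makes it visible in the sign of the leading minor, the paper's hides it in an unjustified squaring. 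The statement is salvageable only on the other branch the paper's proof itself mentions, $\gamma<\tfrac{1}{1+\bar f^2}$: there $1-\gamma>0$, the quadratic has two positive roots, and for $c$ strictly between them your Sylvester argument gives $M-\gamma I\succ 0$ cleanly. If you want a correct proof, that is the case to write up, and you should also track the strict-versus-nonstrict boundary, since taking $c$ equal to a root forces $\det(M-\gamma I)=0$ and only yields $\lambda_{\min}(M)\ge\gamma$.
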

\begin{proof}
    Consider $\Theta^T = [\theta_1~\theta_2]$. Then, 
    \begin{align*}
        -\Theta^T A^S \Theta = \theta^2_1 + c \bar{f} \theta_1\theta_2 + c\theta^2_2.
    \end{align*}
    Now, consider the following:
    
    \begin{align*}
    \min\limits_{\substack{\theta_1\in\R, \theta_2 \in \R\\ \theta^2_1 + \theta^2_2 = 1}}    -\Theta^T A^S \Theta &\ge \min\limits_{\substack{\theta_1 \in \R, \theta_2\in\R, \\ \theta^2_1 + \theta^2_2 = 1}} ~ \theta^2_1 - c |\bar{f}| |\theta_1| |\theta_2| + c \theta^2_2\\
    &= \min\limits_{\substack{|\theta_1| \le 1}} ~ \theta^2_1 - c \abs{\bar{f}} |\theta_1| \sqrt{1-\theta^2_1} + c (1-\theta^2_1)\\
    &= \min\limits_{\substack{|\theta_1| \le 1}} ~ (1-c)\theta^2_1 - c \abs{\bar{f}} |\theta_1| \sqrt{1-\theta^2_1} + c \\
    &= \frac{1+c}{2} - \frac{1}{2}\sqrt{(1-c)^2 + c^2(\bar{f})^2}. \tag{Lemma~\ref{lem:aux1}}
    \end{align*}

    This lower bound greater than $\gamma$ is equivalent to 
    \[ \frac{c^2\bar{f}^2}{4} - c(1-\gamma) + \gamma - \gamma^2 \le 0, \]
    which has a solution (for $c$) if $\gamma > 1$ or $\gamma < \frac{1}{1 + \bar{f}^2}$. For $ \gamma > 1$, the upper bound on $c$ in the condition in the statement of the lemma is the positive root of the above equation. 
\end{proof}
To make the above  condition explicit for specific choices for $\gamma$, 
    \[ \gamma = 1 + \bar{f}^2 \implies c \le 2 \lrs{ - 1 + \sqrt{2 + \bar{f}^2}};\]
    \[ \gamma =2 \implies c {\bar{f}^2} \le 2\lrs{ -1 + \sqrt{1 + 2 \bar{f}^2} }. \]

\subsubsection{Step 2: Proving the MSE Bound}\label{app:step2_statvar}
Let $\Theta^s := [\bar{f} ~ v(f)]$ and $\Theta_k = [\bar{f}_k~v_k]$. Define
 \[ \eta = \max\lrset{1, \|A^S\|_2, \|b^S\|_2}. \]

The MSE bound in Theorem~\ref{th:finitebound_statvar} follows from Theorem~\ref{th:appfiniteboundlfa}. To see that the conditions of Theorem~\ref{th:appfiniteboundlfa} are satisfied, first observe that the iterates $\Theta^S$ are generated by a linear SA, as formulated in the previous section (Section~\ref{app:step1_statvar}). Furthermore, for $d=2$ and subspace $\mathbb S = \R^2$, Lemma~\ref{lem:negdef_statvar} establishes the required contraction property for the averaged matrix $A^S$, and $\eta$ in this section corresponds to the constant $\upb$. Then,  the iterates $\Theta^S_k$ satisfy the bounds in Theorem~\ref{th:appfiniteboundlfa} with $\Theta^* = [\bar{f}~v(f)]$,  and $\gamma_2 = \gamma$.

\subsection{Estimating Variance: IID Setting}\label{app:iidestimation}
Consider  $X_1, X_2, \dots$, independent samples from a distribution with mean $0$ and an unknown variance $ \sigma^2$. Further assume that the sampling distribution has bounded $4^{th}$ moment. The goal is to estimate $\sigma^2$ using these samples. 

Since, in this setting, $\V\lrs{X} = \E{X^2}$, let 
\[\hat{\sigma}^2_n := \frac{1}{n} \sum\limits_{j=1}^n X^2_j\]
denote the estimate for $\sigma^2$ using $n$ samples. Clearly, $\E{\hat{\sigma}^2_n} = \sigma^2$, i.e., $\hat{\sigma}^2_n$ is an unbiased estimator.

Next, consider the following
\begin{align*}
    \E{(\hat{\sigma}^2_n - \sigma^2)^2} 
    &= \E{ (\hat{\sigma}^2_n)^2 + \sigma^4 - 2\sigma^2 \hat{\sigma}^2_n }\\
    &= \E{ (\hat{\sigma}^2_n)^2} - \sigma^4 \\
    &= \frac{1}{n^2}\E{ \sum\limits_{j=1}^n X^4_j + \sum\limits_{i\ne j} X^2_iX^2_j } - \sigma^4\\
    &= \frac{\E{X^4}}{n} + \frac{n(n-1)\sigma^4}{n^2} - \sigma^4\\
    &= \frac{1}{n}\lrp{\E{X^4} - \sigma^4}\\
    &= \frac{c_1}{n},
\end{align*}
where $c_1 = \E{X^4}-(\E{X^2})^2$. Thus, we have that the mean-squared estimation error in this setting is exactly $O(\frac{1}{n})$. In fact, Cram\'er-Rao lower bound \cite{nielsen2013cramer} for the mean-squared error  in estimating $\sigma^2$ using an unbiased estimators is $O(\frac{1}{n})$, establishing that this rate cannot be improved in certain settings.

For $\sigma > 0$, let $\G(0,\sigma^2)$ denote the Gaussian distribution with variance $\sigma^2$. If $X \sim \G(0,\sigma^2)$, $c_1$ equals $3\sigma^2$, and in this case we have that 
\[ \E{ \lrp{\hat{\sigma}^2_n - \sigma^2}^2 } = \frac{3\sigma^2}{n}, \quad\text{ for $c > 0$}.\]

\section{Details of Proof for MSE Bound in Theorem~\ref{th:finiteboundlfa}}\label{app:mainresultslfa}
In this section, we give the missing details and proofs of results from Section~\ref{sec:proofsketch} (proof of Theorem~\ref{th:finiteboundlfa}). As discussed in the main text, the proof of Theorem~\ref{th:finiteboundlfa} proceeds in two steps. We give details of Step 1 in Appendix~\ref{app:Step1}, and that for the Step 2 in Appendix~\ref{sec:verifyconds2}. 

\subsection{Details of Step 1: Algorithm as a Linear SA}\label{app:Step1}
In this section, we add details to the discussion in Section~\ref{sec:alglinearSA} about formulating the proposed algorithm as a linear SA update. Below, we first present the exact form of the average matrices (Section~\ref{app:avgmatrices}), followed by a discussion on the limit point (if convergence happens) of the proposed updates (Section~\ref{app:odedisc}). We then conclude with a proof for Lemma~\ref{th:negdeflfa} in Section~\ref{app:th:negdeflfa}. 

\subsubsection{Stationary-Average Matrices}\label{app:avgmatrices}
The average matrices $A $ and $b$ are given by   
\[
A = \begin{bmatrix}
    -c_1 & \vec{\bf 0}^T & 0 & 0\\
    - \Pi_{2,E}\Phi^T D_\pi \vec{\bf 1}  & 
        {\Pi_{2,E}\Phi^T D_\pi (P - I)\Phi} &  \vec{\bf 0} & \vec{\bf 0}\\
    0 & c_2 \vec{\bf 1}^T D_\pi \Phi & -c_2 &  0\\
    c_3 \bar{f} & 2c_3\mathcal F^T  D_\pi \Phi & -2c_3\bar{f} & -c_3
\end{bmatrix}, \quad \text{and}\quad b = 
\begin{bmatrix}
     c_1 \bar{f}\\
     \Pi_{2,E}\Phi^T D_\pi \mathcal F \\
     0\\
     -{c_3}\Exp{\pi}{f^2(X)}
\end{bmatrix},
 \]
where $P$ is the transition matrix for the Markov chain $\mathcal M$, and $I$ is the identity matrix. This follows from linearity of the expectation (projection is a linear operator) and the observation that the stationary expectation of each entry of matrix $A(\cdot)$ and $b(\cdot)$ is given by the corresponding entry of $A$ and $b$, respectively. 

\subsubsection{Algorithm's Limit: A Discussion}\label{app:odedisc} 
Let $\Theta^{*T} := [\bar{f}~ \theta^{*T} ~ 
\widetilde{V}~ \kappa^* ]$, 
where recall that
\begin{align*}
    \kappa^* = \Exp{\pi}{2f(X)[\Phi\theta^*](X) - 2f(X)\widetilde{V} - f^2(X) + f(X)\bar{f}}, 
\end{align*}
$\Phi\theta^* \in \R^S$, and $[\Phi\theta^*](i)$ represents its $i^{th}$ component, $\widetilde{V} = \Exp{\pi}{[\Phi\theta^*](X)}$,
and $\theta^*$ is the unique vector in $E$ that is also a solution for \[ \Phi\theta = \Pi_{D_\pi, W_\Phi} T\Phi\theta, \]
where
$\Pi_{D_\pi, W_\Phi}$ is the projection matrix onto $W_\Phi:= \lrset{\Phi\theta | \theta\in\R^d}$ with respect to $D_\pi$ norm. Here, $T$ is an operator that for a vector $V\in \R^S$, satisfies $T V = \mathcal F - \bar{f} {\bf e} + P V$.

Let $\Theta_\infty := [{\bar f}_\infty~ \theta_\infty~\widetilde{V}_\infty ~ \kappa_\infty]$, where $\theta_\infty \in E$. We now show that $\Theta_\infty = \Theta^*$ is the unique solution to $A\Theta_\infty + b = 0$. This  equation corresponds to the following:
\begin{align} -c_1 \bar{f}_\infty + c_1 \bar{f}  &= 0, \label{eq:j_mu} \\
-\Pi_{2,E}\Phi^TD_\pi {\bf 1} \bar{f}_\infty + \Pi_{2,E}\Phi^TD_\pi(P - I)\Phi \theta_\infty + \Pi_{2,E}\Phi^T D_\pi \mathcal F &= 0, \label{eq:thetastar} \\
c_2 \vec{\bf 1}^T D_\pi \Phi \theta_\infty - c_2 \widetilde{V}_\infty &=0, \label{eq:tildeV} \\
c_3 \bar{f}\bar{f}_\infty + 2c_3 \mathcal F^T D_\pi \Phi \theta_\infty - 2c_3 \bar{f}\widetilde{V}_\infty - c_3 \kappa_\infty - c_3 \Exp{\pi}{f^2} &= 0.\label{eq:kappamu}
\end{align}

Clearly,~\eqref{eq:j_mu} implies that  $\bar{f}_\infty = \bar{f}$. Moreover, if $\theta_\infty = \theta^* + a \theta_e$, for $a\in\R $, where $\theta_e$ is the unique vector in $\R^d$ such that 
$\Phi\theta_e = {\bf e}$, then~\eqref{eq:tildeV} implies that $\widetilde{V}_\infty = \Exp{\pi}{\Phi(\theta^* + a\theta_e)} = \widetilde{V} + a{\bf e}$ and then,~\eqref{eq:kappamu} implies $\kappa_\infty = \kappa^*$. It remains to show that $\theta_\infty = \theta^* + a \theta_e$, for some $a\in\R$. In fact, since $\theta_\infty \in E$, we show below that $\theta_\infty = \theta^*$. 

Let's use $\bar{f}_\infty = \bar{f}$, and re-write the LHS of~\eqref{eq:thetastar} as below: 
\[ \Pi_{2,E}  \Phi^T D_\pi \lrp{  (\mathcal F - \bar{f} {\bf e}) +(P - I)\Phi \theta_\infty  }   . \]
Clearly, it equals $0$ for $\theta_\infty = \theta^*$. This follows from the definition of $\theta^*$. Hence, $\theta^*$ is a solution for~\eqref{eq:thetastar}. Furthermore, $\theta = \theta^* + c \theta_e$ for any $c\in \R$ cannot be a solution, since $\theta^* + c\theta_e \notin E$ and $\theta_\infty \in E$. We now show that there does not exist any $\theta\in E$ different from $\theta^*$ that satisfies~\eqref{eq:thetastar}. To this end, suppose such a $\theta' \in E$ exists. Then,~\eqref{eq:thetastar} evaluated at $\theta'$ re-writes as
\[  \Pi_{2,E}\Phi^TD_\pi \lrp{\underbrace{ -  \bar{f}{ \bf e } + (P - I)\Phi \theta^* +  \mathcal F }_{=0}} +  \Pi_{2,E} \Phi^TD_\pi  (P - I)\Phi (\theta' - \theta^*),\]
where the first term equals $0$ from the definition of $\theta^*$. Now, recall that $\Pi_{2,E} = I - \theta_e \theta^T_e$, with the convention that $\theta_e = 0$ if ${\bf e}\notin W_\Phi$, the column space of $\Phi$. Further, $\theta^T_e \Phi^T = {\bf e}^T \in \R^{S}$, which implies 
\[  \theta^T_e \Phi^T D_\pi (P - I) = 0. \]
Using these, the second term above equals
\[ \Phi^TD_\pi  (P - I)\Phi (\theta' - \theta^*) - \theta_e \underbrace{\theta^T_e\Phi^TD_\pi (P - I)\Phi (\theta' - \theta^*)}_{=0}. \]
Now, the first term above is non-zero since $\theta' - \theta^* \in E$, and hence, $\Phi(\theta' - \theta^*)$ is non-constant vector that does not belong to the null space of $P - I$. Thus, $\nexists \theta' \in E$ different from $\theta^*$ that also satisfies~\eqref{eq:thetastar}.

\subsubsection{Proof of Lemma~\ref{th:negdeflfa}}\label{app:th:negdeflfa}
In this appendix, we will show  the following:
\[ \min\limits_{\substack{\Theta\in\R\times E\times \R\times \R, \\ \|\Theta\|^2_2 =1} } ~  - \Theta^T A \Theta  > \frac{\Delta_2}{20} > 0, \numberthis\label{eq:minev} \]
where 
\[\Delta_2 := \min\limits_{\|\theta\|_2 = 1, \theta \in E} ~ \theta^T \Phi^T D_\pi(I-P) \Phi \theta.   \]

Recall that for ${\theta} \in E$, ${\Phi \theta}$ is a non-constant vector in $\R^S$. Thus,   $\theta^T \Phi^T D_\pi (I-P) \Phi{\theta} > 0,$
for ${\theta} \in E$ \cite[Lemma 7]{tsitsiklis1997average}. Since the set $\lrset{ \theta \in E : \|\theta\|^2_2 = 1}$ is non-empty and compact, by extreme value theorem, we have $\Delta_2 > 0$. 

Next, we re-write the above minimization problem as (set $\gamma = 1$ in the following)   
\begin{align*}
   &\min\limits_{\substack{ \|\Theta\|^2_2 = 1, \\ \vec{\theta}_2 \in E }} ~ -\Theta^T A \Theta \\
    &=    \min\limits_{\substack{ \|\Theta\|^2_2 = 1, \\ \vec{\theta}_2 \in E }} ~ ~ c_1 \theta^2_1 + \gamma \vec{\theta}^T_2 \Pi_{2,E}\Phi^T D_\pi \vec{1} \theta_1 + \gamma \vec{\theta}^T_2 \Pi_{2,E}\Phi^T D_\pi (I - P) \Phi\vec{\theta}_2 - c_2 \vec{1}^T D_\pi \Phi \vec{\theta}_2 \theta_3 \\ 
    &\qquad\qquad\qquad + c_2 \theta^2_3 - c_3 \bar{f} \theta_1 \theta_4 - 2 c_3 \mathcal F^T D_\pi \Phi\vec{\theta}_2 \theta_4 + 2c_3 \bar{f} \theta_3 \theta_4 + c_3 \theta^2_4\\
    &\overset{(a)}{=}    \min\limits_{\substack{ \|\Theta\|^2_2 = 1, \\ \vec{\theta}_2 \in E }} ~ ~ c_1 \theta^2_1 + \gamma \vec{\theta}^T_2 \Phi^T D_\pi \vec{1} \theta_1 + \gamma \vec{\theta}^T_2 \Phi^T D_\pi (I - P) \Phi\vec{\theta}_2 - c_2 \vec{1}^T D_\pi\Phi \vec{\theta}_2 \theta_3 \\ 
    &\qquad\qquad\qquad + c_2 \theta^2_3 - c_3 \bar{f} \theta_1 \theta_4 - 2 c_3 \mathcal F^T D_\pi \Phi\vec{\theta}_2 \theta_4 + 2c_3 \bar{f} \theta_3 \theta_4 + c_3 \theta^2_4\\
    &\overset{(b)}{\ge}\min\limits_{\substack{ \|\Theta\|^2_2 = 1, \\ \vec{\theta}_2 \in E }} ~ ~ c_1 \theta^2_1 + \gamma \vec{\theta}^T_2 \Phi^T D_\pi \vec{1} \theta_1 + \gamma \Delta_2 \|\vec{\theta}_2\|^2_2 - c_2 \vec{1}^T D_\pi \Phi \vec{\theta}_2 \theta_3 \\ 
    &\qquad\qquad\qquad + c_2 \theta^2_3 - c_3 \bar{f} \theta_1 \theta_4 - 2 c_3 \mathcal F^T D_\pi \Phi\vec{\theta}_2 \theta_4 + 2c_3 \bar{f} \theta_3 \theta_4 + c_3 \theta^2_4\\
    &=\min\limits_{\substack{ \|\Theta\|^2_2 = 1, \\ \vec{\theta}_2 \in E }} ~ ~ c_1 \theta^2_1 + \gamma \Delta_2 \|\vec{\theta}_2\|^2_2 + c_2 \theta^2_3 + c_3 \theta^2_4 \\ 
    &\qquad\qquad\qquad + \vec{\theta}^T_2 \Phi^T D_\pi \lrp{  \gamma \vec{1} \theta_1 - c_2 \vec{1} \theta_3  - 2 c_3 \theta_4 \mathcal F} + 2c_3 \bar{f} \theta_3 \theta_4 - c_3 \bar{f} \theta_1 \theta_4.
\end{align*}
where, $(a)$ follows since $\theta_2\in E$, and $(b)$ follows from the definition of $\Delta_2$. Now, 
\begin{align*}
    &\abs{ \vec{\theta}^T_2 \Phi^T D_\pi\lrp{ \gamma \vec{1}\theta_1 - c_2 \vec{1}\theta_3 - 2c_3 \theta_4 \mathcal F}}\\
    &\le \abs{ \gamma \theta_1\vec{\theta}^T_2 \Phi^T D_\pi \vec{1} } + \abs{ c_2\theta_3\vec{\theta}^T_2 \Phi^T D_\pi \vec{1} } + \abs{2c_3\theta_4\vec{\theta}^T_2 \Phi^T D_\pi \mathcal F}\\
    &\le \gamma \abs{\theta_1} \| \Phi\vec{\theta}_2\|_\infty\| D_\pi \vec{1}\|_1 + c_2\abs{ \theta_3}\|\Phi\vec{\theta}_2\|_\infty \| D_\pi \vec{1} \|_1 + 2c_3\abs{\theta_4}\|\Phi\vec{\theta}_2\|_\infty \| D_\pi \mathcal F\|_1\\
    &\le \gamma \abs{\theta_1} \|\Phi\vec{\theta}_2\|_\infty + c_2\abs{ \theta_3}\|\Phi\vec{\theta}_2\|_\infty  + 2c_3\abs{\theta_4}\|\Phi\vec{\theta}_2\|_\infty f_{\max}\\
    &\overset{(a)}{\le} \gamma \abs{\theta_1} \|\vec{\theta}_2\|_2 + c_2\abs{ \theta_3}\|\vec{\theta}_2\|_2  + 2c_3\abs{\theta_4}\|\vec{\theta}_2\|_2 f_{\max},
\end{align*}
where $f_{\max} := \max_s f(s)$, and $(a)$ follows since $$\|\Phi\vec{\theta}_2\|_\infty \le \max\limits_{s\in\calS} \|\phi(s)\|_2 \|\vec{\theta}_2\|_2 \le \|\vec{\theta}_2\|_2,$$ 
where the first inequality follows from the definition of $\|\cdot\|_\infty$ and Holder's inequality, and the second from Assumption~\ref{asmp:2}. 

Next, using the previous bound, 
\begin{align*}
    &\min\limits_{\substack{ \|\Theta\|^2_2 = 1, \\ \vec{\theta}_2 \in E }} ~ -\Theta^T A \Theta \\
    \ge &\min\limits_{\substack{ \|\Theta\|^2_2 = 1, \\ \vec{\theta}_2 \in E }} ~ ~ c_1 \theta^2_1 + \gamma \Delta_2 \|\vec{\theta}_2\|^2_2 + c_2 \theta^2_3 + c_3 \theta^2_4 + 2c_3 \bar{f} \theta_3 \theta_4 - c_3 \bar{f} \theta_1 \theta_4\\ 
    &\qquad\qquad\qquad - \gamma \abs{\theta_1} \|\vec{\theta}_2\|_2 - c_2\abs{ \theta_3}\|\vec{\theta}_2\|_2  - 2c_3\abs{\theta_4}\|\vec{\theta}_2\|_2 f_{\max}  \\
    = &\min\limits_{\substack{ \theta^2_3 + \theta^2_4 \in[0,1] }} ~~ \min\limits_{\substack{\theta^2_1 + \|\vec{\theta}_2\|^2_2 = 1 - \theta^2_3 - \theta^2_4\\
    \vec{\theta}_2 \in E, \theta_1 \in \R}}  ~ ~ c_1 \theta^2_1 + \gamma \Delta_2 \|\vec{\theta}_2\|^2_2 + c_2 \theta^2_3 + c_3 \theta^2_4  + 2c_3 \bar{f} \theta_3 \theta_4 - c_3 \bar{f} \theta_1 \theta_4\\ 
    &\qquad\qquad\qquad\qquad\qquad\qquad\qquad - \gamma \abs{\theta_1} \|\vec{\theta}_2\|_2 - c_2\abs{ \theta_3}\|\vec{\theta}_2\|_2  - 2c_3\abs{\theta_4}\|\vec{\theta}_2\|_2 f_{\max}\\
    = &\min\limits_{\substack{ \theta^2_3 + \theta^2_4 \in[0,1] }} ~ c_2 \theta^2_3 + c_3 \theta^2_4  + 2c_3 \bar{f} \theta_3 \theta_4 + (P1)\\
    \ge & \min\limits_{\substack{ \theta^2_3 + \theta^2_4 \in[0,1] }} ~ c_2 \theta^2_3 + c_3 \theta^2_4  - 2c_3 f_{\max} \abs{\theta_3}\abs{ \theta_4} + (P1) \numberthis\label{eq:original},
\end{align*}
where,
\begin{align*} (P1) := \min\limits_{\substack{\theta^2_1 + \|\vec{\theta}_2\|^2_2 = 1 - \theta^2_3 - \theta^2_4\\
    \vec{\theta}_2 \in E, \theta_1 \in \R}}   &~~\big\{ c_1 \theta^2_1 + \gamma \Delta_2 \|\vec{\theta}_2\|^2_2 - c_3 \bar{f} \theta_1 \theta_4 - \gamma \abs{\theta_1} \|\vec{\theta}_2\|_2 \\
    &\qquad\qquad\qquad\qquad - c_2\abs{ \theta_3}\|\vec{\theta}_2\|_2  - 2c_3\abs{\theta_4}\|\vec{\theta}_2\|_2 f_{\max}\big\}. 
\end{align*}

    Suppose for $y_1 \in \R$, $y_2 \in \R$, and $y_3 \in \R$, 
    \begin{align*}
        c_1 - \gamma \Delta_2 \ge y_1, & \qquad c_2 - c_3 \ge y_2, \\
        c_3-\gamma\Delta_2 - \frac{{ y_1 - \sqrt{y^2_1 + c^2_2} }}{2} &+ \frac{ y_2 - \sqrt{y^2_2 + 4c^2_3 f^2_{\max} } }{2} \ge y_3.\numberthis \label{eq:conds}.\end{align*} 
    
    Let us first simplify $(P1)$ as below:
    \begin{align*}
    (P1) &= \min\limits_{\substack{\theta^2_1 \in [0, 1 - \theta^2_3 - \theta^2_4]\\
    \theta_1 \in \R}}   \left\{ c_1 \theta^2_1 + \gamma \Delta_2 (1-\theta^2_1 - \theta^2_3 - \theta^2_4) - c_3 \bar{f} \theta_1 \theta_4 \right.\\ &\qquad\qquad\qquad\qquad\qquad\left. -  \lrp{\gamma \abs{\theta_1} + c_2\abs{ \theta_3}  + 2c_3\abs{\theta_4}f_{\max}} \sqrt{1-\theta^2_1 -\theta^2_3 - \theta^2_4} \right\}\\
    &\ge  \min\limits_{\theta_1 \in \left[0, \sqrt{1 - \theta^2_3 - \theta^2_4}\right]}   \left\{ c_1 \theta^2_1 + \gamma \Delta_2 (1-\theta^2_1 - \theta^2_3 - \theta^2_4) - c_3 {f}_{\max} \abs{\theta_1} \abs{\theta_4} \right.\\ &\qquad\qquad\qquad\qquad\qquad\left. -  \lrp{\gamma \abs{\theta_1} + c_2\abs{ \theta_3}  + 2c_3\abs{\theta_4}f_{\max}} \sqrt{1-\theta^2_1 -\theta^2_3 - \theta^2_4} \right\}\\
    &\ge \gamma \Delta_2 (1 - \theta^2_3 - \theta^2_4)  - c_3 {f}_{\max} \abs{\theta_4} \sqrt{ 1-\theta^2_3 - \theta^2_4 } \\ 
    & \qquad \qquad + \min\limits_{\theta_1 \in \left[0, \sqrt{1 - \theta^2_3 - \theta^2_4}\right]}   \bigg\{ (c_1 - \gamma \Delta_2)\theta^2_1  \\
    &\qquad\qquad\qquad\qquad\qquad\quad\quad-  \lrp{\gamma \abs{\theta_1} + c_2\abs{ \theta_3}  + 2c_3\abs{\theta_4}f_{\max}} \sqrt{1-\theta^2_1 -\theta^2_3 - \theta^2_4} \bigg\}\\
    &\ge \gamma \Delta_2 (1 - \theta^2_3 - \theta^2_4)  - c_3 {f}_{\max} \abs{\theta_4} \sqrt{ 1-\theta^2_3 - \theta^2_4 }  \\ 
        &\qquad\qquad\qquad-  \lrp{c_2\abs{ \theta_3}  + 2c_3\abs{\theta_4}f_{\max}} \sqrt{1-\theta^2_3 - \theta^2_4}\\ 
        & \qquad \qquad \qquad + \min\limits_{\theta_1 \in \left[0, \sqrt{1 - \theta^2_3 - \theta^2_4}\right]}   \left\{ (c_1 - \gamma \Delta_2)\theta^2_1   -  \gamma  \sqrt{\theta^2_1  \lrp{1-\theta^2_1 -\theta^2_3} - \theta^4_4} \right\}\\
    &\ge \gamma \Delta_2 (1 - \theta^2_3 - \theta^2_4)  - c_3 {f}_{\max} \abs{\theta_4} \sqrt{ 1-\theta^2_3 - \theta^2_4 }  \\
        &\qquad\qquad\qquad -  \lrp{c_2\abs{ \theta_3}  + 2c_3\abs{\theta_4}f_{\max}} \sqrt{1-\theta^2_3 - \theta^2_4}\\ & \qquad \qquad \qquad + \min\limits_{\theta_1 \in \left[0, \sqrt{1 - \theta^2_3 - \theta^2_4}\right]}   \left\{ y_1\theta^2_1   -  \gamma  \sqrt{\theta^2_1  \lrp{1-\theta^2_1 -\theta^2_3} - \theta^4_4} \right\}\\
    &\overset{(a)}{=} \gamma \Delta_2 (1 - \theta^2_3 - \theta^2_4)  - c_3 {f}_{\max} \abs{\theta_4} \sqrt{ 1-\theta^2_3 - \theta^2_4 } \\
        &\qquad\qquad\qquad -  \lrp{c_2\abs{ \theta_3}  + 2c_3\abs{\theta_4}f_{\max}} \sqrt{1-\theta^2_3 - \theta^2_4}\\ & \qquad \qquad \qquad + \frac{1-\theta^2_3 - \theta^2_4}{2} \lrp{ y_1 - \sqrt{y^2_1 + c^2_2} },
\end{align*}
where $(a)$ follows from Lemma~\ref{lem:aux1}. Now, substituting $(P1)$ in~\eqref{eq:original}, we get
\begin{align*}
    &\min\limits_{\substack{ \|\Theta\|^2_2 = 1, \\ \vec{\theta}_2 \in E }} ~ -\Theta^T A \Theta \\
    \ge &\min\limits_{\substack{ \theta^2_3 + \theta^2_4 \in[0,1] }} ~ c_2 \theta^2_3 + c_3 \theta^2_4  - 2c_3 f_{\max} \abs{\theta_3}\abs{ \theta_4} + \gamma \Delta_2 (1 - \theta^2_3 - \theta^2_4) \\
        &\qquad\qquad\qquad- c_3 {f}_{\max} \abs{\theta_4} \sqrt{ 1-\theta^2_3 - \theta^2_4 } -  \lrp{c_2\abs{ \theta_3}  + 2c_3\abs{\theta_4}f_{\max}} \sqrt{1-\theta^2_3 - \theta^2_4} \\
        &\qquad\qquad\qquad+ \frac{1-\theta^2_3 - \theta^2_4}{2} \lrp{ y_1 - \sqrt{y^2_1 + c^2_2} }\\
    = & \min\limits_{x^2\in [0,1]} ~\min\limits_{\substack{ \theta^2_3 + \theta^2_4 = x^2 }} ~ c_2 \theta^2_3 + c_3 \theta^2_4  - 2c_3 f_{\max} \abs{\theta_3}\abs{ \theta_4} + \gamma \Delta_2 (1 - \theta^2_3 - \theta^2_4) \\
    & \qquad \qquad \qquad -  \lrp{c_2\abs{ \theta_3}  + 3c_3\abs{\theta_4}f_{\max}} \sqrt{1-\theta^2_3 - \theta^2_4} \\
    &\qquad\qquad\qquad + \frac{1-\theta^2_3 - \theta^2_4}{2} \lrp{ y_1 - \sqrt{y^2_1 + c^2_2} }\\
    = & \gamma\Delta_2 + \frac{ y_1 - \sqrt{y^2_1 + c^2_2}}{2} + ~\min\limits_{x^2\in [0,1]} ~\lrp{c_3-\gamma\Delta_2 - \frac{{ y_1 - \sqrt{y^2_1 + c^2_2} }}{2}}x^2 \\  
    & \qquad \qquad \qquad +~\min\limits_{\substack{ \theta^2_3 \in [0, x^2] }} ~~~ \bigg\{  (c_2-c_3) \theta^2_3  \\ 
    &\qquad\qquad\qquad\qquad- 2c_3 f_{\max} \sqrt{\theta^2_3 x^2 - \theta^4_3} 
    -\lrp{c_2\abs{ \theta_3}  + 3c_3f_{\max}\sqrt{x^2 - \theta^2_3}} \sqrt{1-x^2} \bigg\} \\
    \ge & \gamma\Delta_2 + \frac{ y_1 - \sqrt{y^2_1 + c^2_2}}{2} + ~\min\limits_{x\in [0,1]} ~\lrset{c_3-\gamma\Delta_2 - \frac{{ y_1 - \sqrt{y^2_1 + c^2_2} }}{2}}x^2 \\ 
    & \qquad\qquad\qquad -\lrp{c_2  + 3c_3f_{\max}} \sqrt{x^2-x^4} \\
    &\qquad\qquad\qquad ~+~\min\limits_{\substack{ \theta_3 \in [0, x] }} ~~~ \lrset{(c_2-c_3) \theta^2_3  - 2c_3 f_{\max} \sqrt{\theta^2_3 x^2 - \theta^4_3}}  \\
    \ge & \gamma\Delta_2 + \frac{ y_1 - \sqrt{y^2_1 + c^2_2}}{2} + ~\min\limits_{x\in [0,1]} ~\lrset{c_3-\gamma\Delta_2 - \frac{{ y_1 - \sqrt{y^2_1 + c^2_2} }}{2}}x^2 \\
    &\qquad\qquad\qquad -\lrp{c_2  + 3c_3f_{\max}} \sqrt{x^2-x^4}+~\min\limits_{\substack{ \theta_3 \in [0, x] }} ~~\lrset{ y_2 \theta^2_3  - 2c_3 f_{\max} \sqrt{\theta^2_3 x^2 - \theta^4_3} } \\
    \overset{(a)}{=} & \gamma\Delta_2 + \frac{ y_1 - \sqrt{y^2_1 + c^2_2}}{2} -\lrp{c_2  + 3c_3f_{\max}} \sqrt{x^2-x^4}\\ 
    & \qquad\qquad\qquad+ ~\min\limits_{x^2\in [0,1]} ~\lrset{c_3-\gamma\Delta_2 - \frac{{ y_1 - \sqrt{y^2_1 + c^2_2} }}{2} + \frac{ y_2 - \sqrt{y^2_2 + 4c^2_3 f^2_{\max} } }{2}}x^2 \\
    \ge & \gamma\Delta_2 + \frac{ y_1 - \sqrt{y^2_1 + c^2_2}}{2} + ~\min\limits_{x^2\in [0,1]} ~y_3x^2 -\lrp{c_2  + 3c_3f_{\max}} \sqrt{x^2-x^4}\\
    \overset{(b)}{=} & \gamma\Delta_2 + \frac{ y_1 - \sqrt{y^2_1 + c^2_2}}{2} + \frac{ y_3 - \sqrt{y^2_3 + (c_2 + 3c_3 f_{\max})^2} }{2},
\end{align*}
where, $(a)$ and $(b)$ follow from Lemma~\ref{lem:aux1}.

\vspace{0.75em}
\noindent{\bf Conditions on $c_1$, $c_2$, and $c_3$ to ensure a strictly positive lower bound. } For $d_1\in [-1,0]$, $d_2 < 0$, and $d_3 \in [-1,0]$, let
    \[\frac{ y_1 - \sqrt{y^2_1 + c^2_2}}{2} = d_1 \Delta_2,\]
    \[\frac{ y_2 - \sqrt{y^2_2 + 4c^2_3 f^2_{\max} } }{2} = d_2 \Delta_2,\]
    and 
    \[\frac{ y_3 - \sqrt{y^2_3 + (c_2 + 3c_3 f_{\max})^2} }{2} = d_3 \Delta_2.\]
Then, 
\[ y_1 = -\frac{c^2_2}{4 d_1 \Delta_2} + d_1 \Delta_2, \quad y_2 = - \frac{c^2_3 f^2_{\max}}{d_2 \Delta_2} + d_2 \Delta_2, \quad \text{ and }\quad y_3 = - \frac{(c_2 + 3 c_3 f_{\max})^2}{4 d_3 \Delta_2} + d_3 \Delta_2. \]
On choosing $\gamma = 1$, the lower bound becomes 
\[ (1+d_1 + d_3) \Delta_2, \]
and for $c>0$, setting 
\[d_2 = -c(1+d_1 + d_3),\]
the conditions in~\eqref{eq:conds} reduce to 
\begin{align*}
    &c_1 \ge - \frac{1}{4 d_1 \Delta_2} + \Delta_2 (1+d_1),\\
    &c_2 - c_3 - \frac{c^2_3 f^2_{\max}}{c (1+d_1 + d_3) \Delta_2} + c (1+d_1 + d_3) \Delta_2 \ge 0\\
    &c_3 + \frac{(c_2 + 3  c_3 f_{\max})^2}{4 d_3 \Delta_2} - (1+c)(1+d_1+d_3)\Delta_2 \ge 0,
\end{align*}
which for $f_{\max} = 1$, reduce to \begin{align*}
    &c_1 \ge - \frac{1}{4 d_1 \Delta_2} + \Delta_2 (1+d_1),\numberthis\label{eq:cond1}\\
    &c_2 - c_3 - \frac{c^2_3}{c (1+d_1 + d_3) \Delta_2} + c (1+d_1 + d_3) \Delta_2 \ge 0\numberthis\label{eq:cond2}\\
    &c_3 + \frac{(c_2 + 3  c_3 )^2}{4 d_3 \Delta_2} - (1+c)(1+d_1+d_3)\Delta_2 \ge 0 \numberthis\label{eq:cond3}.
\end{align*}

\vspace{0.75em}
\noindent{\bf Specific choices for the parameters. } The conditions~\eqref{eq:cond2} and~\eqref{eq:cond3} imply
\[ d_3 \le -\frac{9(1+c)(1+d_1)}{1+9(1+c)}, \] 
\[ \max\lrset{(1+c)(1 + d_1 + d_3) \Delta_2, -\frac{2 \Delta_2}{9 } \lrp{ d_3 + \sqrt{d^2_3 + 9 d_3 (1+c) (1+d_1+d_3)  } }} \le c_3,\]
\[c_3\le \frac{2 \Delta_2}{9 } \lrp{ -d_3 + \sqrt{d^2_3 + 9 d_3 (1+c) (1+d_1+d_3)  } }, \]
\[ c_3 + \frac{c^2_3}{c(1+d_1+d_3) \Delta_2} - c(1+d_1+d_3)\Delta_2 \le c_2,\]
and 
\[ c_2 \le -3 c_3  + 2 \sqrt{ d_3 \Delta_2 (-c_3 + (1+c)(1+d_1+d_3) \Delta_2)}.  \]

Choose $c$, $d_1$, $d_3$, and $c_1$ as below:  
    \[ c = -\frac{7}{24}, \quad d_1 = -\frac{1}{2}, \quad d_3 = - \frac{75}{166}, \quad c_1\ge \frac{1}{2\Delta_2} + \frac{\Delta_2}{2}. \]
Further, let $c_2$ and $c_3$ satisfy
\[ \frac{5}{249}(5-2\sqrt{2})\Delta_2 \le c_3\le \frac{5}{249} (5+2\sqrt{2})\Delta_2, \]
\[ c_3 -\frac{498 c^2_3}{7\Delta_2} + \frac{7 \Delta_2}{498}   \le c_2 \le -3 c_3 + \frac{5 }{83} \sqrt{498 c_3 \Delta_2 - 17 \Delta^2_2} . \]
We remark that the range of $c_2$ above is non-trivial for the feasible choices of $c_3$, and with these choices, 
\[ \min\limits_{\substack{ \|\Theta\|^2_2 = 1, \\ \vec{\theta}_2 \in E }} ~ -\Theta^T A \Theta \ge (1+d_1+d_3)\Delta_2 \ge \frac{4\Delta_2}{83}> \frac{\Delta_2}{20} > 0. \]
This proves the required contraction property of the update matrix $A$.

\subsection{Details of Step 2: MSE Bound}\label{sec:verifyconds2}

In this section, we add details to the discussion in Section~\ref{sec:MSEBound}. In this appendix, we verify that the  conditions in Theorem~\ref{th:appfiniteboundlfa} are met by the iterates and update matrices of our estimator, which will enable us to use Theorem~\ref{th:appfiniteboundlfa} to arrive at the desired bounds.

Consider the sequence of iterates $ \Theta_k  := [\bar{f}_k~ \theta^T_k~ \widetilde{V}_k  ~\kappa_k]$ generated by the proposed algorithm in Section~\ref{sec:alglfa} with $\Theta_0 = {\vec{\bf 0}}$ and $\theta_k \in E$. Recall from Section~\ref{sec:proofsketch} that these iterates can be rewritten as a linear stochastic-approximation update given below: 
\[ \Theta_{k+1} = \Theta_k + \alpha_k \lrp{  A(Y_k)\Theta_k  + b(Y_k)}. \]

Moreover, in our setting, recall that $\mathbb S = E$, and $\Theta_k \in E$, $\Theta^* \in E$. Lemma~\ref{th:negdeflfa} implies that the condition~\ref{eq:contractfactor} on update matrix $\A$ in Theorem~\ref{th:appfiniteboundlfa} is also satisfied in our setting with $\gamma_2 = \frac{\Delta_2}{20}$.  It only remains to verify the bound $\upb$ on update matrices. In what follows, we will show that $\upb$ in Theorem~\ref{th:appfiniteboundlfa} corresponds to $\eta$ in our setting. Theorem~\ref{th:finiteboundlfa} then corresponds to Theorem~\ref{th:appfiniteboundlfa} with these choices. 

\vspace{0.75em}
\noindent{\bf Bounds on ${A}(Y_t)$ and ${b}(Y_t)$. }
Define $\tilde{A}(Y_k)$ and $\tilde{b}(Y_k)$ so that $A(Y_k) = \Pi \tilde{A}(Y_k)$ and $b(Y_k) = \Pi \tilde{b}(Y_k)$, where
\[ \Pi := 
\begin{bmatrix}
    1  & \vec{\bf 0}^T & 0 & 0\\
    \vec{\bf 0} & \Pi_{2,E}  & \vec{\bf 0} & \vec{\bf 0}\\
    0  & \vec{\bf 0}^T  & 1 & 0\\
    0  & \vec{\bf 0}^T  & 0 & 1
\end{bmatrix}.\]
Similarly, define $\tilde{A}$ and $\tilde{b}$ so that $A = \Pi \tilde{A} $ and $b = \Pi \tilde{b}$, i.e.,
\[
\tilde{A} = \begin{bmatrix}
    -c_1 & \vec{\bf 0}^T & 0 & 0\\
    - \Phi^T D_\pi \vec{\bf 1}  & 
        {\Phi^T D_\pi (P - I)\Phi} &  \vec{\bf 0} & \vec{\bf 0}\\
    0 & c_2 \vec{\bf 1}^T D_\pi \Phi & -c_2 &  0\\
    c_3 \bar{f} & 2c_3\mathcal F^T  D_\pi \Phi & -2c_3\bar{f} & -c_3
\end{bmatrix}, \quad \text{and}\quad \tilde{b} = 
\begin{bmatrix}
     c_1 \bar{f}\\
     \Phi^T D_\pi \mathcal F \\
     0\\
     -{c_3}\Exp{\pi}{f^2(X)}
\end{bmatrix},
 \]
For a matrix $M$, let $\|M\|_2$  and $\|M\|_F$ denote its induced $2$-norm and Frobenius norm, respectively. Then, recall that $\|M\|_2 \le \|M\|_F$.
    \begin{align*}
        \|{A}(Y_t)\|_2 
        &= \|\Pi \tilde{A}(Y_t)\|_2\le \|\Pi\|_2\|\tilde{A(Y_t)}\|_2,
        \end{align*}
        where the above inequality follows from sub-multiplicativity of the induced $2$-norm. Now, since $\Pi$ is an orthogonal projection matrix, $\|\Pi\|_2 = 1$. Thus, we have 
        \begin{align*}
        \|A(Y_t)\|_2 &\le \|\tilde{A}(Y_t)\|_2\\
        &\stackrel{(a)}{\le} \|\tilde{A}(Y_t)\|_F\\
        &\le \sqrt{ c^2_1  + 1 + \| \phi_k(\phi^T_{k+1} - \phi^T_k) \|^2_F  + 2c^2_2 + c^2_3 f^2_{\max} + 8c^2_3 f^2_{\max} + c^2_3 }\\
        &\stackrel{(b)}{\le} \sqrt{ c^2_1  + 1 + (\| \phi_k\phi^T_{k+1}\|_F + \|\phi_k\phi^T_k\|_F)^2  + 2c^2_2 + 10c^2_3 }\\
        %&= \sqrt{ c^2_1  + 1 + (\| \phi_k\phi^T_{k+1}\|_2 + \|\phi_k\phi^T_k\|_2)^2  + 16c^2_2 r^2_{\max} + c^2_2 }\\
        %&\le \sqrt{ c^2_1  + 1 + (\| \phi_k\phi^T_{k+1}\|_F + \|\phi_k\phi^T_k\|_F)^2  + 16c^2_2 r^2_{\max} + c^2_2 }\\
        &\stackrel{(c)}{=} \sqrt{ c^2_1  + 1 + (\|\phi_k\|_2\| \phi_{k+1}\|_2 + \|\phi_k\|_2\|\phi_k\|_2 )^2 + 2c^2_2 + 10 c^2_3 }\\
        &\stackrel{(d)}{\le} \sqrt{ c^2_1  + 5 + 2c^2_2 + 10c^2_3 },
    \end{align*}
    where $(a)$ follows since for a matrix $M$, $\|M\|_2 \le \|M\|_F$, $(b)$ uses triangle inequality for $\|\cdot\|_F$ and that $f_{\max} = 1$, and $(d)$ uses Assumption~\ref{asmp:2}. To conclude $(c)$, let $x_1 \in \R^d$ and $x_2\in\R^d$. Observe the following equalities for $\|x_1 x^T_2\|_F$.
    \[ \|x_1 x^T_2\|_F = \sqrt{\operatorname{Trace}(x_1 x^T_2 x_2 x^T_1)} = \sqrt{\operatorname{Trace}(x_1x^T_1 \|x_2\|^2_2)} = \|x_2\|_2 (x^T_1 x_1) = \|x_1\|_2\|x_2\|_2 . \]
    Next, to bound ${b}(Y_t)$,
    \begin{align*}
        \|{b}(Y_t) \|_2
        = \|\Pi~ \tilde{b}(Y_t) \|_2
        \le \|\tilde{b}(Y_t)\|_2
        = \sqrt{c^2_1  + 1  + {c^2_3}},
    \end{align*}
    where we again used that $f_{\max} = 1$.    Thus, \begin{equation*}
        \eta :=  \sqrt{ c^2_1  + 5 + 2c^2_2 + 10c^2_3}
    \end{equation*}
    is the required upper bound $\upb$.

\section{Approximation Error due to LFA: Proof of Proposition~\ref{prop:approxerror}}\label{app:approxerr}

\vspace{0.75em}
\noindent{\bf Approximation error in $V$ estimation. } Since any element from $S_f$ is a valid estimator for $V$ function, and since from Theorem~\ref{th:finiteboundlfa}, the proposed algorithm's estimate for $V$ converges to $\Phi\theta^*$, where $\theta^*$ is the unique vector in the subspace $E$ that satisfies~\eqref{eq:projBellman}, the approximation error incurred in estimating $V$ using the proposed TD-like algorithm from Section~\ref{sec:alglfa} is the minimum weighted distance between the estimate using limit of the algorithm, i.e. $\Phi\theta^*$, and the set $S_f$, given by
$${\mathcal E}_{D_\pi, S_f} \lrs{\Phi\theta^*} := \inf\limits_{V\in S_f} \| \Phi\theta^* -  V\|^2_{D_\pi}.$$ 

Following arguments similar to those in \cite{tsitsiklis1999average}, one can relate this error to the minimum possible approximation error (using any algorithm) incurred due to the chosen architecture, 
$$\mathcal E := \inf\limits_{\theta \in \R^d}\inf\limits_{V\in S_f}\|\Phi \theta - V\|_{D_\pi}.$$ 
In particular, it can be shown that for a constant $\lambda \in (0,1)$,   
\begin{align*} 
{\mathcal E}_{D_\pi, S_f} \lrs{\Phi\theta^*} \le \frac{\mathcal E}{\sqrt{1-\lambda^2}} .
\end{align*}
Note that the bound above is a blow-up of the minimum error possible due to the chosen architecture for approximation.  

\vspace{0.75em}
\noindent{\bf Approximation error in variance estimation. } Using this, we now bound the approximation error for the variance estimation using the proposed algorithm. Recall from~\eqref{eq:kmu2} that for any $V \in S_f$,
\[ \kappa(f) = 2\Exp{\pi}{(f(X) - \bar{f})V(X)} - \Exp{\pi}{(f(X) - \bar{f})^2}.\]
Moreover, from~\eqref{eq:kappastar}, one can see that 
\[ \kappa^*= 2\Exp{\pi}{(f(X) - \bar{f})[\Phi\theta^*](X)} - \Exp{\pi}{(f(X) -\bar{f})^2}. \]
Thus, for any $c\in \R$, 
\begin{align*}
    \abs{\kappa(f) - \kappa^*} 
    &= 2 \abs{\Exp{\pi}{\lrp{f(X) \!-\! \bar{f}}\lrp{V^*(X) + c \! - \! \lrs{\Phi\theta^*}(X)}}}\\
    &= 2 \abs{\left<\mathcal F - \bar{f} {\bf e} , V^* + c{\bf e}- \Phi\theta^* \right>_{D_\pi }}\\
    &= 2 \abs{\left< {D^\frac{1}{2}_\pi} \lrp{\mathcal F - \bar{f} {\bf e}} , {D^\frac{1}{2}_\pi }\lrp{V^* + c{\bf e} - \Phi\theta^*} \right>}\\
    &\le 2 \| D^\frac{1}{2}_{\pi}\lrp{\mathcal F - \bar{f} {\bf e}} \|_2~ \| D^\frac{1}{2}_\pi\lrp{V^* + c {\bf e}- \Phi  \theta^*} \|_2\\
    &\le 4 f_{\max}~\| V^* + c{\bf e} - \Phi \theta^* \|_{D_\pi}\\
    &= 4~\| V^* + c{\bf e} - \Phi \theta^* \|_{D_\pi}.
\end{align*}
Since the above inequality is true for all $c\in\R$, the approximation error for $\kappa(f)$ is bounded as below: 
\begin{align*} 
    (\kappa(f) - \kappa^*)^2 
    &\le 16  ~\inf\limits_{c\in\R}~\| \Phi\theta^* - V^* - c{\bf e} \|^2_{D_\pi} \\
    &= 16  ~\inf\limits_{V\in S_f}~\| \Phi\theta^* - V\|^2_{D_\pi}\\
    &=  16  ~\lrp{{\mathcal E}_{D_\pi,S_f}\lrs{\Phi\theta^*}}^2\\
    &\le\frac{16 ~{\mathcal E}^2}{1-\lambda^2}. \qed 
\end{align*}

%\section{Details from Section~\ref{sec:RL} on Applications in RL}\label{sec:RL_app}

\section{Policy Evaluation with Linear Function Approximation in RL}\label{sec:lfa_RL}
When the state and action spaces of the underlying MDP are large, estimating the $Q_\mu$ function for each state-action pair (as in Algorithm~\ref{alg:PE_RL}) requires a lot of memory and may be intractable. To address this, we consider estimating an approximation of $\kappa_\mu$ by extending the setup of Section~\ref{sec:lfa} to RL, by approximating $Q_\mu$ with its projection on to a linear subspace spanned by a given fixed set of $d$ vectors. We discuss this framework in the current section.

\subsection{Setup}
Let us again denote the given set of $d$ vectors by $\{\tilde{\phi}_1, \dots, \tilde{\phi}_d\}$, where $\tilde{\phi}_i \in \R^{\abs{\calS}\abs{\calA}}$ for $i\in [d]$. In particular, for $(s,a) \in \calS\times\calA$, and $\theta\in \R^d$, we consider a linear function approximation 
$$Q_\theta(s,a) = \phi(s,a)^T\theta$$ 
of $Q_\mu(s,a)$, where 
$$\phi^T(s,a) := [\tilde{\phi}_1(s,a)  \dots \tilde{\phi}_d(s,a)]$$ 
is the feature vector for state-action pair $(s,a)$ and $\phi(s,a)\in\R^d$. With this notation, let $\Phi$ be a $\abs{\calS}\abs{\calA} \times d$ matrix with $\tilde{\phi}_i$ being the $i^{\text{th}}$ column. Let $W_\Phi = \{\Phi\theta : \theta \in \R^d\}$ denote the column space of $\Phi$. Then, $Q_\theta = \Phi \theta$, where $Q_\theta \in \R^{\abs{\calS}\abs{\calA}\times 1}$ is an approximation for $Q_\mu$ using $\theta$. We make the following assumption on the feature matrix $\Phi$, which can be achieved by feature normalization, and is standard in literature; see \cite{tsitsiklis1999average,BertsekasT96}.

\begin{assumption}\label{asmp:2_RL}
    \emph{The matrix $\Phi$ is full rank, i.e., the set of feature vectors $\{\tilde{\phi}_1, \dots, \tilde{\phi}_d\}$ are linearly independent. Additionally, $\|\phi(s,a)\|_2 \le 1$, for each $(s,a)\in\calS\times\calA$.}
\end{assumption}

Define subspace $S_{\Phi, e}$ of $\R^d$ as $S_{\Phi,e} := \text{span}\lrp{ \lrset{\theta | \Phi \theta = {\bf e}}}$. It equals $\lrset{c\theta_e | c\in \R}$ if ${\bf e} \in W_\Phi,$ and  $\theta_e$ is the unique vector (if it exists) such that $\Phi\theta_e = {\bf e}$. Otherwise, $S_{\Phi,e} = \{0\}$. Let $E$ be the subspace of $\R^d$ that is orthogonal complement (in $\ell_2$-norm) of $S_{\Phi, e}$, i.e., $ E = \{ \theta \in \R^{d} : ~ \theta^T  \theta_e = 0 \}$ if ${\bf e}\in W_\Phi$. It equals $\R^d$, otherwise. Additionally, let $\Pi_{2,E}$ denote the orthogonal projection of vectors in $\R^d$ (in $2$-norm) on the subspace $E$. 

Observe that for $\theta \in E$, $\Phi \theta$ is a non-constant vector. This follows since there does not exist $\theta\in E$ such that $\Phi\theta = {\bf e}$. In particular, if ${\bf e}\in W_\Phi$, then the unique vector $\theta_e$ such that $\Phi \theta_e = {\bf e}$ does not belong to $E$. Similarly, if ${\bf e}\notin W_\Phi$, then there does not exist a vector $\theta\in\R^d$, hence in $E$, such that $\Phi\theta = {\bf e}$. 

In this setup, let $\theta^*_\mu$ denote the unique vector in $E$ that is a solution for 
\[ \Phi\theta^*_\mu = \Pi_{D_\mu, W_\Phi} T\Phi\theta^*_\mu, \numberthis\label{eq:thetamu_RL}\]
where recall that $D_\mu$ is the diagonal matrix of stationary distribution of the underlying Markov chain (defined around~\eqref{eq:P2}), $\Pi_{D_\mu, W_\Phi}$ is the projection matrix onto the subspace $W_\Phi$ in $D_\mu$ norm, and $T$ is the operator for which $Q_\mu$ is a fixed point, i.e., it acts on vectors $Q \in \R^{|\calS||\calA|}$, and satisfies
\[ TQ = \mathcal R - J_\mu {\bf e} + P_2Q, \]
where $P_2$ is the transition matrix for the underlying Markov chain, defined in~\eqref{eq:P2}. 

Algorithm~\ref{alg:PE_lfa_RL} in the next section, estimates $\theta^*_\mu$ using stochastic approximation for~\eqref{eq:thetamu_RL}, and estimates 
\[ \kappa^*_\mu =  \Exp{d_\mu}{2r(S,A)[\Phi\theta^*_\mu](S,A) - 2 r(S,A)\widetilde{Q}_\mu - r^2(S,A) + r(S,A)J_\mu},\]
where $\widetilde{Q}_\mu := \Exp{d_\mu}{[\Phi\theta^*_\mu](S,A)}$ as a proxy for $\kappa_\mu$. Observe that $\kappa^*_\mu$ differs from $\kappa_\mu$ in that $Q_\mu$ and $\bar{Q}_\mu$ in formulation~\eqref{eq:kappaRL} are replaced by the estimate in the subspace spanned by $\Phi$ and $\widetilde{Q}$, respectively. 

\subsection{Algorithm}\label{app:alglfa_RL}
We now adapt the Algorithm from Section~\ref{sec:alglfa} to RL for estimating the asymptotic variance of a given stationary  policy $\mu$ with linear function approximation. Here, we estimate a vector $\theta \in \R^d$ at each step; call this estimate at step $k$ as $\theta_k$. The corresponding estimate for $Q_\mu$ is then $\Phi\theta_k$. 

\begin{algorithm2e}[bt]
\RestyleAlgo{ruled}
\DontPrintSemicolon
\SetNoFillComment
\SetKwInOut{Input}{Input}\SetKwInOut{Output}{Output}
%\setstretch{1.05}
\caption{Estimating $\kappa_\mu$: Linear Function Approximation Setting}\label{alg:PE_lfa_RL}
    %\vspace{0.4em}
    \BlankLine

    \KwIn{Time horizon $n > 0$, constants $c_1 > 0$, $c_2 > 0$, $c_3 > 0$ and step-size sequence $\lrset{\alpha_k}$.}
    \BlankLine

    {\bf Initialization: } $\bar{r}_0  = 0$, $\theta_0 = \vec{\bf  0} $, $\widetilde{Q}_0 = 0$, and $\kappa_0 =0$.\\
    \BlankLine

\For{$k\leftarrow 1$ \KwTo $n$}{
\BlankLine

Observe ($S_k, A_k, r(S_k,X_k), S_{k+1}, A_{k+1}$).\;

\BlankLine

\tcp{\textbf{Average reward $J_\mu$ estimation}} 
        \vspace{-1.5em}
        \begin{align*}
            \bar{r}_{k+1} = \bar{r}_k + c_1\alpha_k (  r(S_k,A_k) -\bar{r}_k ).
        \end{align*}
\;
        \vspace{-1.5em}

\tcp{\textbf{$\theta^*_\mu$ estimation}}
\vspace{-1.5em}
    \begin{align*}
    & \delta_k :=  r(S_k,A_k) - \bar{r}_{k} + (\phi(S_{k+1},A_{k+1}) - \phi(S_k, A_k))^T\theta_k,\\
    &\theta_{k+1} = \theta_k + \alpha_k \Pi_{2,E}\phi(S_k,A_k) \delta_k.
    \end{align*}
\;
\vspace{-1.5em}
\tcp{\textbf{$\widetilde{Q}_\mu$ estimation}}
\vspace{-1.5em}
    \begin{align*}
        &\widetilde{Q}_{k+1} = (1-c_2\alpha_k)\widetilde{Q}_k + c_2 \alpha_k \phi^T(S_k,A_k)\theta_k.
    \end{align*}
\;
\vspace{-1.5em}

\tcp{\textbf{Asymptotic Variance $\kappa_\mu$ estimation}}
\vspace{-1.5em}
        \begin{align*}
        \kappa_{k+1} =  (1-c_3\alpha_k) \kappa_k & + c_3\alpha_k\big( 2r(S_k,A_k)\phi^T(S_k,A_k)\theta_k \\
        &\qquad\qquad \qquad - 2r(S_k,A_k)\widetilde{Q}_k - r^2(S_k,A_k) + r(S_k,A_k)\bar{r}_k \big).
        \end{align*}
        \vspace{-1.5em}
    }
 \Output{Estimates $\kappa_n$, $\widetilde{Q}_n$, $\theta_n$, $\bar{r}_n$} 
    
\end{algorithm2e}
%Here, updates in~\eqref{eq:lfadeltaUpdate} and~\eqref{eq:lfaThetaUpdate} correspond to SA for estimating $\theta$, adjusted with the projection on subspace $E$. As in Section~\ref{sec:varest:tabular}, this projection step is to ensure convergence of the iterates $\widetilde{V}_k$, and is not required otherwise (Remark~\ref{rem:proj}). Note that Algorithm~\ref{alg:PE} for the tabular setting is a special case with $d = S$ and $\Phi = I$, the identity matrix in $S\times S$ dimensions.

Note that Algorithm~\ref{alg:PE_RL} for the tabular setting is a special case with $d = \abs{\mathcal S}\abs{\mathcal A}$ and $\Phi = I$, the identity matrix in $\abs{\mathcal S}\abs{\mathcal A}$ dimensions. We conclude this section with a finite-sample bound on the estimation error of the proposed algorithm with linear function approximation. 

\subsection{Convergence Rates}
Let $\Theta^T_k := [r_k ~ \theta^T_k ~ \widetilde{Q}_k~\kappa_k]$ denote the vector of estimates of Algortithm~\ref{alg:PE_lfa_RL} at time $k$. Recall the Markov chain $\mathcal M_3$ introduced in Section~\ref{sec:RL_bounds}. Define 
\[ \tilde{\Delta}_2 := \min\limits_{\|\theta\|_2 = 1, \theta \in E} ~ \theta^T \Phi^T D_\mu(I-P_2) \Phi \theta. \numberthis\label{eq:tildedelta2} \]
As earlier, this can be shown to be strictly positive since for $\theta\in E$, $\Phi \theta$ is a non-constant vector. Finally, let $\eta:= (c^2_1+5+2c^2_2+10c^2_3)^\frac{1}{2}$, and let $\Theta^{*T}_\mu := [J_\mu~ \theta^{*T}_\mu ~ 
\widetilde{Q}_\mu~ \kappa^*_\mu ]$, 
where 
each of these terms is defined around~\eqref{eq:thetamu_RL}. The following theorem bounds the mean-squared distance between $\Theta_k$, the vector of estimates of  Algorthm~\ref{alg:PE_lfa_RL}  at time $k$,  and its limit point $\Theta^*_\mu$, under constant as well as diminishing step sizes. 

\begin{theorem}\label{th:finiteboundlfa_RL}
    Consider estimates ${\Theta}_k$ generated by Algorithm~\ref{alg:PE_lfa_RL} with $c_1$, $c_3$, and $c_2$ satisfying:
    \[c_1\ge \frac{1}{2\tilde{\Delta}_2} + \frac{\tilde{\Delta}_2}{2}, \qquad  \frac{5}{249}(5-2\sqrt{2})\tilde{\Delta}_2 \le c_3\le \frac{5}{249} (5+2\sqrt{2})\tilde{\Delta}_2, \]
    and
    \[ c_3 -\frac{498 c^2_3}{7\tilde{\Delta}_2} + \frac{7 \tilde{\Delta}_2}{498}   \le c_2 \le -3 c_3 + \frac{5 }{83} \sqrt{498 c_3 \tilde{\Delta}_2 - 17 \tilde{\Delta}^2_2} . \]
    For a constant $B > 1$, let  $\xi_1 :=  3\lrp{1+\|\Theta^*_\mu\|_2}^2$, $\xi_2 := 112B(1+\|\Theta^*_\mu\|_2)^2$, where recall that $\eta$ and $\Theta^*_\mu$ were defined below~\eqref{eq:tildedelta2}.
    \begin{enumerate}%[label=(\alph*)]
        \item[(a)] Let $\alpha_i = \alpha$ for all $i$, such that     
        \[ \alpha < \min\lrset{ \frac{40}{\tilde{\Delta}_2}, \frac{1}{28B\lrp{1+ \frac{20\eta^2}{\tilde{\Delta}_2} }}}. \]
        Then, for all $k\ge 1$, 
        \[ \E{\|\Theta_k - \Theta^*_\mu\|^2_2} \le \xi_1\lrp{1-\frac{\tilde{\Delta}_2\alpha}{40}}^{k} +  \frac{20\xi_2 \alpha\eta^2}{\tilde{\Delta}_2} + \xi_2\alpha. \]
        \item[(b)]  Let $\alpha_i = \frac{\alpha}{i+h}$ for all $i \ge 1$, with $\alpha$ and $h$ chosen so that
        \[ h \ge \max\lrset{1+\frac{\alpha\tilde{\Delta}_2}{40}, 1+28B\lrp{ \frac{20\eta^2\alpha}{\tilde{\Delta}_2} + \alpha + \frac{20}{\tilde{\Delta}_2} }},\quad\text{and}\quad \alpha > \frac{40}{\tilde{\Delta}_2}.\]
        Then, for all $k\ge 0$, 
        \begin{align*}
        \E{\|\Theta_k- \Theta^*_\mu\|^2_2}  &\le \xi_1\lrp{\frac{h}{k+h}}^\frac{\alpha\tilde{\Delta}_2}{40} + \frac{5\xi_2e^2\eta^2(20+\tilde{\Delta}_2)\alpha^2}{(k+h)(\alpha\tilde{\Delta}_2 -40)} + \frac{\xi_2 \alpha}{k+h}.
        \end{align*} 
    \end{enumerate}
\end{theorem}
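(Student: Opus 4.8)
The plan is to establish Theorem~\ref{th:finiteboundlfa_RL} by the same two-step reduction used to prove Theorem~\ref{th:finiteboundlfa}, observing that Algorithm~\ref{alg:PE_lfa_RL} is precisely the linear-function-approximation variance estimator of Section~\ref{sec:alglfa} instantiated on the Markov chain $\mathcal M_3$, with the reward $r$, average reward $J_\mu$, value function $Q_\mu$, stationary distribution $d_\mu$, and transition matrix $P_2$ playing the roles of $f$, $\bar f$, $V$, $\pi$, and $P$, respectively. Since the remark following~\eqref{eq:qmu_temp} already records that the formulations for $\kappa_\mu$ hold under exactly this substitution, the entire analysis transfers verbatim; what remains is to carry the substitution through carefully.

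First, I would cast the iterates $\Theta_k := [\bar r_k~\theta_k^T~\widetilde Q_k~\kappa_k]$ as a linear SA update $\Theta_{k+1} = \Theta_k + \alpha_k(A(Y_k)\Theta_k + b(Y_k))$, with $Y_k = (S_k,A_k,S_{k+1},A_{k+1})$ the state of $\mathcal M_3$ and $A(Y_k), b(Y_k)$ defined exactly as in Section~\ref{sec:alglinearSA} but with $r(S_k,A_k)$ and $\phi(S_k,A_k)$ in place of $f(X_k)$ and $\phi(X_k)$. The averaged matrices $A = \mathbb{E}_{d_\mu}[A(Y)]$ and $b = \mathbb{E}_{d_\mu}[b(Y)]$ then take the form in Appendix~\ref{app:avgmatrices} with $D_\mu$, $P_2$, $J_\mu$, and $\mathcal R$ replacing $D_\pi$, $P$, $\bar f$, and $\mathcal F$. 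Repeating the argument of Appendix~\ref{app:odedisc}, the vector $\Theta^*_\mu$ defined around~\eqref{eq:thetamu_RL} is the unique solution of $A\Theta + b = 0$ with $\theta \in E$; in particular $\theta^*_\mu$ solving the projected Bellman equation~\eqref{eq:thetamu_RL} is uniquely determined in $E$ because $\Phi\theta$ is non-constant for $\theta \in E$, so $\Phi(\theta'-\theta^*_\mu)$ lies outside the null space of $P_2 - I$.

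Second, I would establish the semi-norm contraction property $\min_{\Theta \in \R \times E \times \R \times \R,\ \|\Theta\|_2 = 1} -\Theta^T A\Theta > \tilde{\Delta}_2/20 > 0$. This is the RL counterpart of Lemma~\ref{th:negdeflfa}, and its proof is identical to that in Appendix~\ref{app:th:negdeflfa} after replacing $\Delta_2$, $D_\pi$, $P$, and $f_{\max}$ by $\tilde{\Delta}_2$, $D_\mu$, $P_2$, and $r_{\max} = 1$; positivity of $\tilde{\Delta}_2$ (noted after~\eqref{eq:tildedelta2}) follows from \cite[Lemma 7]{tsitsiklis1997average} since $\Phi\theta$ is non-constant on $E$. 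The constraints on $c_1, c_2, c_3$ in the statement are exactly those under which this bound holds. Next, I would bound the update matrices: using $r_{\max} = 1$ (assumed in Section~\ref{sec:RL_bounds}) and $\|\phi(s,a)\|_2 \le 1$ from Assumption~\ref{asmp:2_RL}, the Frobenius-norm computation of Appendix~\ref{sec:verifyconds2} gives $\|A(Y_k)\|_2 \le \sqrt{c_1^2 + 5 + 2c_2^2 + 10c_3^2} = \eta$ and $\|b(Y_k)\|_2 \le \sqrt{c_1^2 + 1 + c_3^2} \le \eta$, so the constant $\upb$ of Theorem~\ref{th:appfiniteboundlfa} equals $\eta$ in this setting.

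With these ingredients, I would invoke Theorem~\ref{th:appfiniteboundlfa} with $\mathbb S = E$, $\gamma_2 = \tilde{\Delta}_2/20$, $\upb = \eta$, and $\iter^* = \Theta^*_\mu$, which yields exactly the constant-step-size bound of part (a) and the diminishing-step-size bound of part (b). Because the argument is purely a substitution into an already-proven template, there is no genuinely new obstacle; the only point demanding care is verifying that the contraction proof of Appendix~\ref{app:th:negdeflfa} goes through unchanged — in particular that $\tilde{\Delta}_2 > 0$ and that the boundedness of $r$ lets the $f_{\max} = 1$ simplifications be reused with $r_{\max} = 1$. Given that, the stated bounds follow immediately, establishing the $O(\frac{1}{n})$ MSE rate for estimating $\kappa_\mu$ (and, as in Corollary~\ref{cor:MSE_PE}, for $Q_\mu$) in the linear function approximation setting.
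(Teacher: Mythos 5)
Your proposal is correct and follows essentially the same route as the paper: the paper proves Theorem~\ref{th:finiteboundlfa_RL} by observing that Algorithm~\ref{alg:PE_lfa_RL} is the Section~\ref{sec:alglfa} estimator instantiated on $\mathcal M_3$ with $r$, $d_\mu$, $\theta^*_\mu$, $\widetilde{Q}_\mu$, $\kappa^*_\mu$ replacing $f$, $\pi$, $\theta^*$, $\widetilde V$, $\kappa^*$, and invoking Theorem~\ref{th:finiteboundlfa}. You simply unpack that reduction one level further into the linear-SA formulation, the contraction lemma, and Theorem~\ref{th:appfiniteboundlfa}, which is exactly how Theorem~\ref{th:finiteboundlfa} is itself established.
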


\noindent\emph{Proof sketch. }
    The bounds in this theorem follow immediately from those in Theorem~\ref{th:finiteboundlfa} by observing that the setting considered here corresponds to that of Theorem~\ref{th:finiteboundlfa} with $r(\cdot,\cdot)$, $d_\mu$, $\theta^*_\mu$, $\widetilde{Q}_\mu$, and $\kappa^*_\mu$ replacing $f(\cdot)$, $\pi$, $\theta^*$, $\widetilde{V}$,  and $\kappa^*$, respectively.
\qed

\begin{remark}
As in Corollary~\ref{cor:MSE_PE}, Theorem~\ref{th:appfiniteboundlfa} $(b)$ gives a MSE bound of $O(\frac{1}{n})$ for policy evaluation problem in average reward RL setting with LFA, improving the result of \cite{zhang2021finite} by a multiplicative factor of $\log n$.
\end{remark}

\section{Auxiliary Technical Lemmas}
\begin{lemma}\label{lem:diffinstepsize}
    For any $\alpha > 0$ and $h\ge 2$, for all $k \ge 1$, and $\alpha_k = \frac{\alpha}{k+h}$ or $\alpha_k = \alpha$, we have
    \[ \alpha_{k-1} - \alpha_k \le \frac{3\alpha^2_k}{\alpha}. \]
\end{lemma}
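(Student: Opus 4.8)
The plan is to treat the two admissible step-size schedules separately, since the statement is a disjunction over the form of $\alpha_k$, and each case is elementary. For the constant schedule $\alpha_k = \alpha$, I would simply observe that $\alpha_{k-1}-\alpha_k = \alpha - \alpha = 0$, whereas the right-hand side satisfies $3\alpha_k^2/\alpha = 3\alpha > 0$ because $\alpha > 0$. Hence the inequality holds trivially (with room to spare) in this case.

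The substantive case is the diminishing schedule $\alpha_k = \alpha/(k+h)$. The first step is to compute the difference exactly by putting the two fractions over a common denominator:
\[
\alpha_{k-1}-\alpha_k = \frac{\alpha}{k-1+h} - \frac{\alpha}{k+h} = \frac{\alpha}{(k-1+h)(k+h)},
\]
which is well defined and positive since $k\ge 1$ and $h\ge 2$ force $k-1+h>0$. Meanwhile the target right-hand side is $3\alpha_k^2/\alpha = 3\alpha/(k+h)^2$. The desired bound therefore reduces, after cancelling the common factor $\alpha$ and cross-multiplying by the positive quantity $(k-1+h)(k+h)^2$, to the purely algebraic inequality $(k+h) \le 3(k-1+h)$.

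The final step is to verify this reduced inequality. Expanding gives $(k+h) \le 3(k+h) - 3$, i.e.\ $2(k+h) \ge 3$, i.e.\ $k+h \ge 3/2$. Since $k\ge 1$ and $h\ge 2$ we in fact have $k+h \ge 3 \ge 3/2$, so the inequality holds, completing the argument. I do not expect any genuine obstacle here: the only mild care needed is to keep track of the positivity of the denominators (guaranteed by $h\ge 2$) so that the cross-multiplication preserves the direction of the inequality, and to note that the constant-step-size bound is exactly the degenerate instance where Remark~\ref{rem:const} applies, i.e.\ the left-hand side vanishes.
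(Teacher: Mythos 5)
Your proof is correct and follows essentially the same route as the paper: both compute the exact difference $\alpha_{k-1}-\alpha_k = \frac{\alpha}{(k+h-1)(k+h)}$ and then bound the ratio to $\frac{3\alpha_k^2}{\alpha}$, the only (cosmetic) difference being that you verify the final inequality by clearing denominators, whereas the paper invokes $\frac{1}{1-x}\le 1+2x$ for $x\le\frac{1}{2}$.
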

\begin{proof}
    The result is trivially true for $\alpha_k$ being a constant ($\alpha_k = \alpha$). Hence, we only provide a proof for the setting of a diminishing step-size ($\alpha_k = \frac{\alpha}{k+h}$). To this end, consider the following: 
    \begin{align*}
        \alpha_{k-1} - \alpha_k 
        &= \frac{\alpha}{k+h-1} - \frac{\alpha}{k+h} \\
        &= \frac{\alpha}{(k+h)(k+h-1)}\\
        &= \frac{\alpha^2_k}{\alpha\lrp{1-\frac{1}{k+h}}}\\
        &\le \frac{\alpha^2_k}{\alpha}\lrp{1+\frac{2}{k+h}} \\
        &\le \frac{3\alpha^2_k}{\alpha},
    \end{align*}
    where we used $\frac{1}{1-x}\le 1+2x$ for $x\le \frac{1}{2}$ in the first inequality above. 
\end{proof}

\begin{lemma}\label{lem:diffalpha} For any $\alpha > 0$, and either $\alpha_k = \alpha$ for all $k\ge 0$, or for $h\ge 2$, $\alpha_k = \frac{\alpha}{k+h}$, for all $k\ge 0$, we have
\[\alpha_k \alpha_{k-1} \le 2 \alpha^2_k.\]\end{lemma}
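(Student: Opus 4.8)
The plan is to split the argument according to the two step-size schedules allowed in the hypothesis, and in each case reduce the claimed product bound to an elementary inequality. Since the inequality $\alpha_k \alpha_{k-1} \le 2\alpha_k^2$ is equivalent (after dividing by the positive quantity $\alpha_k^2$) to the ratio bound $\alpha_{k-1}/\alpha_k \le 2$, I would first isolate this reformulation and then verify it in each case. This is essentially the same bookkeeping that already appears in the proof of Lemma~\ref{lem:diffinstepsize}, so the two proofs can be handled in parallel.

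For the constant step size $\alpha_k = \alpha$, the claim is immediate: $\alpha_k \alpha_{k-1} = \alpha^2 = \alpha_k^2 \le 2\alpha_k^2$, so nothing further is required. For the diminishing step size $\alpha_k = \frac{\alpha}{k+h}$ with $h \ge 2$, I would compute the ratio directly,
\[
\frac{\alpha_{k-1}}{\alpha_k} = \frac{\alpha/(k+h-1)}{\alpha/(k+h)} = \frac{k+h}{k+h-1},
\]
and then observe that this is bounded by $2$ precisely when $k+h \ge 2$. Rearranging $\frac{k+h}{k+h-1} \le 2$ gives $k+h \le 2(k+h)-2$, i.e. $k+h \ge 2$, which holds for all $k \ge 0$ since $h \ge 2$. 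Multiplying back through by $\alpha_k^2$ then yields $\alpha_k \alpha_{k-1} \le 2\alpha_k^2$.

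There is no genuine obstacle here; the only point deserving care is the boundary index $k = 0$, where the factor $\alpha_{k-1} = \alpha_{-1} = \frac{\alpha}{h-1}$ appears. This is exactly where the standing assumption $h \ge 2$ is used, guaranteeing both that $\alpha_{-1}$ is well defined and positive (since $h - 1 \ge 1 > 0$) and that the ratio bound $k + h \ge 2$ holds at $k = 0$. Thus the whole statement follows from this single elementary inequality, and I would present it compactly as a two-line case split rather than introducing any auxiliary machinery.
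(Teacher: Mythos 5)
Your proof is correct and follows essentially the same route as the paper's: both reduce the claim to the ratio bound $\alpha_{k-1}/\alpha_k = \frac{k+h}{k+h-1} \le 2$, which the paper writes as $\frac{1}{1-\frac{1}{k+h}} \le \frac{1}{1-\frac{1}{h}} \le 2$ using $h \ge 2$. Your remark about the boundary index $k=0$ is a nice bit of extra care, but there is no substantive difference.
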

\begin{proof}
    This is trivially true for $\alpha_k = \alpha$. Below, we show for diminishing step sizes of the form $\alpha_k = \frac{\alpha}{k+h}$, for $h\ge 2$. Consider the following:
    \begin{align*}
        \alpha_k \alpha_{k-1} &= \frac{\alpha^2_k}{1-\frac{1}{k+h}}\le \frac{\alpha^2_k}{1-\frac{1}{h}}\le 2\alpha^2_k,
    \end{align*}
    proving the lemma.
\end{proof}

\begin{lemma}\label{lem:aux1}
    For $b\ge 0$, and $c \ge 0$,
    \[ \min\limits_{x\in [0,c]}\lrset{ax - b\sqrt{cx-x^2}} = \frac{c}{2}\lrp{a-\sqrt{a^2 + b^2}}. \]
\end{lemma}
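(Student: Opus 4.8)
The plan is to reduce this one-variable optimization to maximizing a single sinusoid via a trigonometric substitution. First I would dispose of the degenerate case $c=0$: there the feasible set is the single point $\{0\}$, the objective equals $0$, and the claimed value $\frac{c}{2}(a-\sqrt{a^2+b^2})$ also vanishes, so the identity holds. Hence I assume $c>0$ in what follows.

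The key observation is that completing the square gives $cx-x^2=\frac{c^2}{4}-\left(x-\frac{c}{2}\right)^2$, which motivates the substitution $x=\frac{c}{2}(1-\cos\theta)$ with $\theta\in[0,\pi]$. As $\theta$ ranges over $[0,\pi]$ the point $x$ ranges bijectively over $[0,c]$, and since $x-\frac{c}{2}=-\frac{c}{2}\cos\theta$ we obtain $cx-x^2=\frac{c^2}{4}\sin^2\theta$, whence $\sqrt{cx-x^2}=\frac{c}{2}\sin\theta$ (the nonnegative branch is correct because $\sin\theta\ge 0$ on $[0,\pi]$). Substituting, the objective becomes $\frac{ac}{2}-\frac{c}{2}\left(a\cos\theta+b\sin\theta\right)$, so minimizing the original expression over $x\in[0,c]$ is equivalent to maximizing $h(\theta):=a\cos\theta+b\sin\theta$ over $\theta\in[0,\pi]$.

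Next I would invoke the amplitude--phase bound $h(\theta)\le\sqrt{a^2+b^2}$, valid for all $\theta$, with equality at the point $\theta^*$ satisfying $\cos\theta^*=a/\sqrt{a^2+b^2}$ and $\sin\theta^*=b/\sqrt{a^2+b^2}$ (the case $a=b=0$ being trivial, as then $h\equiv 0$). The one step that genuinely uses the hypothesis $b\ge 0$, and which I expect to be the only point requiring care, is the verification that this maximizer is feasible: since $b\ge 0$ we have $\sin\theta^*\ge 0$, so $\theta^*\in[0,\pi]$, and therefore the unconstrained supremum $\sqrt{a^2+b^2}$ is actually attained on the admissible interval. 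It follows that $\max_{\theta\in[0,\pi]}h(\theta)=\sqrt{a^2+b^2}$, and substituting back yields $\min_{x\in[0,c]}\left\{ax-b\sqrt{cx-x^2}\right\}=\frac{ac}{2}-\frac{c}{2}\sqrt{a^2+b^2}=\frac{c}{2}\left(a-\sqrt{a^2+b^2}\right)$, as claimed.

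I do not anticipate a serious obstacle here: the substitution linearizes the awkward square-root term, and the only subtleties are the range check for $\theta^*$ (settled by $b\ge 0$) and the trivial boundary case $c=0$. A purely calculus-based alternative---differentiating $ax-b\sqrt{cx-x^2}$, solving the stationarity condition for the interior critical point, and comparing its value with the endpoints $x=0$ and $x=c$---also reaches the same answer, but the algebra is messier, so I would present the trigonometric argument.
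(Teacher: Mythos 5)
Your proof is correct, and it takes a genuinely different route from the paper's. The paper argues by calculus: it sets $f(x) = ax - b\sqrt{cx-x^2}$, computes $f'(x) = a - \frac{b(c-2x)}{2\sqrt{cx-x^2}}$, notes that $f$ is convex on $[0,c]$ (since $x \mapsto -\sqrt{cx-x^2}$ is convex for $b\ge 0$), solves the stationarity condition to get the interior critical point $x_1 = \frac{c}{2}\bigl(1 - \frac{a}{\sqrt{a^2+b^2}}\bigr)$, and evaluates $f(x_1)$ to obtain the stated value. Your substitution $x = \frac{c}{2}(1-\cos\theta)$ instead linearizes the square-root term and reduces the problem to maximizing $a\cos\theta + b\sin\theta$ on $[0,\pi]$, with the hypothesis $b\ge 0$ entering only to place the unconstrained maximizer $\theta^*$ inside the feasible range. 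What your approach buys is cleaner handling of the delicate points that the calculus route glosses over: the derivative $f'$ is singular at the endpoints $x=0,c$, the stationarity equation degenerates when $b=0$ (where $f$ is linear and the minimum sits at an endpoint, yet the formula $\frac{c}{2}(a-|a|)$ still holds), and your argument covers both uniformly via the amplitude--phase bound. What the paper's approach buys is brevity and the explicit location of the minimizer $x_1$, which is occasionally useful downstream. Both are complete and reach the same conclusion.
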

\begin{proof}
    Let $f(x) := ax - b\sqrt{cx - x^2}$. Let $f'(x)$ denote the derivative of $f(x)$ and $f^{''}(x)$ denote the corresponding second derivative, both evaluated at $x$. Then, 
    \[ f'(x) = a - \frac{b(c-2x)}{2\sqrt{cx - x^2}}, \]
    and $f^{''}(x) > 0$ for $b\ge 0$. Solving for x such that $f'(x) = 0$, we get $x_1$ defined below as the optimizer. 
    \[ x_1 := \frac{c}{2}\lrp{1-\frac{a}{\sqrt{a^2 + b^2}}}, \]
    and $f(x_1)$ is the desired optimal value.
\end{proof}
\end{document}